\newtheorem{theorem}{Theorem}[section]
\newtheorem{lemma}[theorem]{Lemma}
\newtheorem{proposition}[theorem]{Proposition}
\theoremstyle{definition}
\newtheorem{remark}[theorem]{Remark}
\providecommand{\customgenericname}{}
\newcommand{\newcustomtheorem}[2]{%
  \newenvironment{#1}[1]
  {%
   \renewcommand\customgenericname{#2}%
   \renewcommand\theinnercustomgeneric{##1}%
   \innercustomgeneric
  }
  {\endinnercustomgeneric}
}
\newcommand{\IR}{\mathbb{R}}
\newcommand{\IC}{\mathbb{C}}
\newcommand{\IN}{\mathbb{N}}
\newcommand{\IZ}{\mathbb{Z}}
\newcommand{\IF}{\mathbb{F}}
\newcommand{\IE}{\mathbb{E}}
\newcommand{\IB}{\mathbb{B}}
\newcommand{\cA}{\mathcal{A}}
\newcommand{\cL}{\mathcal{L}}
\newcommand{\fu}{\mathfrak{u}}
\renewcommand{\L}{\mathrm{L}}
\newcommand{\C}{\mathrm{C}}
\newcommand{\B}{\mathrm{B}}
\newcommand{\W}{\mathrm{W}}
\newcommand{\per}{\mathrm{per}}
\newcommand{\e}{\mathrm{e}}
\renewcommand{\d}{\mathrm{d}}
\newcommand{\eps}{\varepsilon}
\renewcommand\Re{\operatorname{Re}}
\newcommand{\Lop}{\mathcal{L}}
\DeclareMathOperator{\Id}{Id}
\numberwithin{equation}{section} 
\title[Strong Time Periodic Solutions to the Bidomain Equations]{Strong Time Periodic Solutions to the Bidomain Equations with FitzHugh--Nagumo Type Nonlinearities}
\author{Matthias Hieber}
\author{Naoto Kajiwara}
\author{Klaus Kress}
\author{Patrick Tolksdorf}
\address{Fachbereich Mathematik, Technische Universit\"at Darmstadt, Schlossgartenstr. 7, 64289 Darmstadt, Germany}
\address{Graduate School of Mathematical Sciences, The University of Tokyo, 3-8-1 Komaba, Meguro, Tokyo, 153-8914, Japan}
\address{Fachbereich Mathematik, Technische Universit\"at Darmstadt, Schlossgartenstr. 7, 64289 Darmstadt, Germany}
\address{Fachbereich Mathematik, Technische Universit\"at Darmstadt, Schlossgartenstr. 7, 64289 Darmstadt, Germany}
\email{hieber@mathematik.tu-darmstadt.de}
\email{kajiwara@ms.u-tokyo.ac.jp}
\email{kkress@mathematik.tu-darmstadt.de}
\email{tolksdorf@mathematik.tu-darmstadt.de}
\thanks{The second author was supported by JSPS Japanese-German Graduate Externship and scientific grant Kiban S (26220702) as a research assistant of the University of Tokyo. 
The third and fourth authors are supported by the DFG International Research Training Group 1529 on Mathematical Fluid Dynamics at TU Darmstadt. }
\begin{document}

\begin{abstract}
Consider the bidomain equations subject to ionic transport described by the models of FitzHugh--Nagumo, Aliev--Panfilov, or Rogers--McCulloch. It is proved that this set of equations admits 
a unique, {\em strong} $T$-periodic solution provided it is innervated by $T$-periodic intra- and extracellular currents. The approach relies on a new periodic version of the classical 
Da Prato--Grisvard theorem on maximal $\L^p$-regularity in real interpolation spaces.   
\end{abstract}
\maketitle
\section{Introduction}\label{Sec: Introduction}


The bidomain system is a well established system of equations describing the electrical activities of the heart. For a detailed description of this model as well as 
its derivation from general principles, we refer, e.g., to~\cite{MR3308707,henriquez} and the monograph  by Keener and Sneyd ~\cite{MR1673204}. The system is given by 
\begin{align}\tag{BDE}\label{Eq: BDE}
\left\{
\begin{aligned}
\partial_t u + F(u,w) - \nabla \cdot (\sigma_i \nabla u_i) &= I_i & \mathrm{in} & ~ (0 , \infty) \times \Omega ,  \\
\partial_t u + F(u,w) + \nabla \cdot (\sigma_e \nabla u_e) &= - I_e & \mathrm{in} & ~ (0 , \infty) \times \Omega ,  \\
\partial_t w + G(u,w) &= 0 & \mathrm{in} & ~ (0 , \infty) \times \Omega ,  \\
\end{aligned}
\right.
\end{align}
subject to the boundary conditions 
\begin{align}
\sigma_i \nabla u_i \cdot \nu = 0, \quad  \sigma_e \nabla u_e \cdot \nu = 0  \quad \mathrm{on}  \quad  (0 , \infty) \times \partial \Omega , 
\end{align}
and the initial data 
\begin{align}
u(0) = u_0, \quad w(0) = w_0  \quad \mathrm{in}  \quad \Omega. 
\end{align}
Here $\Omega \subset \IR^n$ denotes a domain describing the myocardium,  the functions $u_i$ and $u_e$ model the  intra- and extracellular electric potentials, $u := u_i- u_e$  
denotes the transmembrane potential, and $\nu$ denotes the outward unit normal vector to $\partial \Omega$. The anisotropic properties of the intra- and extracellular tissue 
parts will be described by the conductivity matrices $\sigma_{i}(x)$ and $\sigma_{e}(x)$. Furthermore, $I_i$ and $I_e$ stand for the intra- and extracellular stimulation current, respectively. 

The variable $w$, the so-called gating variable, corresponds to the ionic transport through the cell membrane. On a microscopic level, the intra- and extracellular quantities are defined on 
disjoint domains $\Omega_i$ and $\Omega_e$ of $\Omega$. After a homogenization procedure described rather rigorously, e.g.,  in~\cite{PSC05, MR1944157}, one obtains the macroscopic model above, 
where the intra- and extracellular components are defined on all of $\Omega$. The behavior of the ionic current through the cell membrane, described by the variable $w$, is coupled with the transmembrane voltage $u$ by 
the equation in the third line of~\eqref{Eq: BDE}.      

Mathematical models describing the propagation of impulses in electrophysiology have a long tradition starting with the classical model by Hodgkin and Huxley in the 1950s, see, e.g., the recent survey article of Stevens~\cite{Stevens}. In this article, 
we consider various models for the ionic transport including the models by FitzHugh--Nagumo, Aliev--Panfilov, and Rogers--McCulloch. The {\em FitzHugh--Nagumo model} reads as 
\begin{align*}
F(u,w)&= u(u-a)(u-1)+w = u^3-(a+1)u^2+au+w , \\
G(u,w)&= bw - cu,
\end{align*}
where $0<a<1$ and $b$, $c > 0$. In the {\em Aliev--Panfilov model} the functions $F$ and $G$ are given by 
\begin{align*}
F(u,w)&= ku(u-a)(u-1)+uw = ku^3-k(a+1)u^2+kau+uw, \\
G(u,w)&=ku(u-1-a)+dw,
\end{align*}
whereas for the {\em Rogers--McCulloch model} we have 
\begin{align*}
F(u,w)&= bu(u-a)(u-1)+uw = bu^3-b(a+1)u^2+bau+uw, \\ 
G(u,w)&= dw - cu.
\end{align*} 
The coefficients in these models satisfy the conditions $0<a<1$ and $b$, $c$, $d$, $k > 0$.

Despite its importance in cardiac electrophysiology, not many analytical results on the bidomain equations are known until today. Note that the so-called bidomain operator is a very 
{\em non local} operator, which makes the analysis of this equation seriously more complicated compared, e.g., to the classical Allen--Cahn equation.   

The rigorous mathematical analysis of this system started with the work of Colli-Franzone and Savar\'e~\cite{MR1944157}, who introduced a variational formulation of the problem and showed 
the global existence and uniqueness of weak and strong solutions for FitzHugh-Nagumo model. Veneroni~\cite{MR2474265} extended the latter result to more general models for the ionic transport including the Luo and Rudy I model~\cite{LR91}. 

In 2009, a new approach to this system was presented by Bourgault, Cordi\`ere, and Pierre in~\cite{MR2451724}. They introduced for the first time the so-called bidomain operator within the $\L^2$-setting and showed that it is a non-negative and self-adjoint operator. 
By making use of  the theory of evolution equations they further showed the existence and uniqueness of a local strong solution and the existence of a global, weak solution to the system above for a large class of ionic models including the FitzHugh--Nagumo, Aliev--Panfilov, and Rogers--McCulloch models above.
In~\cite{KW13}, the uniqueness and regularity of the weak solution were proved. 

For results concerning the optimal control problem subject to the monodomain approximation, in which the conductivity matrices satisfy $\sigma_i = \lambda \sigma_e$ for some $\lambda >0$, we 
refer to a series of papers by Kunisch et al. ~\cite{MR3342160, KW12, NKP13, KNW}, see also \cite{SNMCLT06}. 
 
A new impetus to the field was recently given by  Giga and Kajiwara~\cite{GK}, who investigated the bidomain equations within the $\L^p$-setting for $1<p\leq \infty$. They showed that the 
bidomain operator is the generator of an analytic semigroup on $\L^p(\Omega)$ for $p \in (1,\infty]$ and constructed a local, strong solution to the bidomain system within this setting. 

All these results mainly concern the well-posedness of the bidomain equations and results on the dynamics of the solution are even more rare. We refer here to the very recent work of 
Mori and Matano~\cite{MM16}, who studied for the first time the stability of front solutions of the bidomain equations.  

In this context it is now a very natural question to ask, whether the bidomain equations admit time periodic solutions. Periodic solutions can be formulated in various regularity 
classes, ranging from weak over mild to strong solutions.  

In this paper, we consider the situation where the bidomain model, combined with one of the models for the ionic transport above, is innervated by  
{\em periodic intra- and extracellular currents}  $I_i$ and $I_e$. It is then our aim to show that in this case the innervated system admits a {\em strong} time periodic solution of 
period $T$ provided the outer forces $I_i$ and $I_e$ are both time-periodic of period $T>0$.
 
Let us emphasize, that we consider here the {\em full} bidomain model taking into account the anisotropic phenomena and not only the so-called monodomain approximation. A function space 
related to a  fixed point argument for the Poincar\'e map in the strong sense is naturally linked to a space of maximal regularity. This leads us to the scale of real interpolation spaces and  
our approach is then based on a periodic version of the classical Da Prato--Grisvard theorem~\cite{DPG75}. A different approach within the $\L^p$-setting based on a semilinear version of a  
result by Arendt and  Bu~\cite{AB02} on strong periodic solutions of linear equations would require additional properties of the bidomain operator, which, however, seem to be unknown. 
  
Some more specific words about the strategy of our approach are in order. The bidomain system is first reformulated into a coupled system. In this coupled system a $2 \times 2$ operator 
matrix $\cA$ involving the bidomain operator $A$ in one of its components will represent the linear part of ~\eqref{Eq: BDE}. Given a Banach space $X$ 
and a $T$-periodic function $f: \IR \to X$ whose restriction to $(0,T)$ belongs to $\L^p(0,T;X)$, we understand by a strong $T$-periodic solution to the bidomain equations with 
right-hand side $(f , 0)$ a $T$-periodic tupel $(u , w) \in \L^p(0,T;X)$ satisfying $(u^{\prime} , w^{\prime}) \in \L^p(0,T;X)$ and $ \cA (u , w) \in \L^p(0,T;X)$. This means in particular 
that $(u , w)$ admit the property of maximal $\L^p$-regularity. In order to obtain a $T$-periodic solution to \eqref{Eq: BDE} within this regularity class, we choose as underlying 
Banach space the real interpolation space $D_A(\theta , p)$ for $\theta \in (0,1)$, $1\leq p<\infty$, and $A$ being again the bidomain operator. Our approach to $T$-periodic solutions 
for the linearized equation is then  based  on a periodic version of the classical Da Prato--Grisvard theorem, which we develop in Section~\ref{Sec: Da Prato--Grisvard}. Having this 
at hand, we apply then the contraction mapping principle in the space of maximal regularity to find  a 
strong $T$-periodic solution of the nonlinear problem in a neighborhood of stable equilibrium points.

This paper is organized as follows: While Section~\ref{Sec: Preliminaries} is devoted to fix some notation and to collect some known results, our main results on strong $T$-periodic solutions to the bidomain equations subject to a large class of models for the ionic transport are presented in Section~\ref{Sec: Main results}.
The following Section~\ref{Sec: Da Prato--Grisvard} presents a  periodic version of the Da Prato--Grisvard theorem, which will be extended in Section~\ref{Sec: The periodic solution} to the semilinear setting. In Section~\ref{Sec: Examples} we apply 
our previous results to the bidomain equations subject to  various models for the ionic transport including the models by FitzHugh--Nagumo, Aliev--Panfilov, and Rogers--McCulloch.

\section{Preliminaries} 
\label{Sec: Preliminaries}
In the whole article, let the space dimension $n \geq 2$ be fixed and let $\Omega \subset \IR^n$ denote a bounded domain with boundary $\partial \Omega$ of class $\C^2$. For the conductivity matrices $\sigma_i$ and $\sigma_e$ we make the following assumptions.

\begin{assumption}{E}
\label{Ass: Coefficients}
The conductivity matrices $\sigma_i , \sigma_e : \overline{\Omega} \to \IR^{n \times n}$ are symmetric matrices and are functions of class $\C^1 (\overline{\Omega})$. Ellipticity is imposed by means of the following condition: there exist constants 
$\underline{\sigma}$, $\overline{\sigma}$ with $0 < \underline{\sigma} < \overline{\sigma}$ such that 
\begin{align}
\underline{\sigma} |\xi|^2 \leq \langle \sigma_i (x) \xi , \xi \rangle \leq \overline{\sigma}|\xi|^2 \quad \text{and} \quad \underline{\sigma} |\xi|^2 \leq \langle \sigma_e(x) \xi , \xi \rangle \leq \overline{\sigma}|\xi|^2 \label{Eq: UE}
\end{align}
for all $x \in \overline{\Omega}$ and all $\xi \in \IR^n$. 
Moreover, it is assumed that 
\begin{align} \begin{aligned}
\sigma_i \nabla u_i \cdot \nu = 0 \qquad &\Leftrightarrow \qquad \nabla u_i \cdot \nu =0 \quad &{\rm on}~\partial\Omega, \\
 \sigma_e \nabla u_e \cdot \nu = 0 \qquad &\Leftrightarrow \qquad \nabla u_e \cdot \nu =0 \quad &{\rm on}~\partial\Omega. \label{Eq: EV}
\end{aligned}
\end{align}
\end{assumption}
It is known due to~\cite{franzone1990mathematical} that~\eqref{Eq: EV} is a biological reasonable assumption.

Next, we define the bidomain operator in the $\L^q$-setting for $1 < q < \infty$. To this end, let $\L^q_{av} (\Omega):= \{u\in \L^q(\Omega) : \int_\Omega u \; \d x = 0\}$ and 
let $P_{av}$ be the orthogonal projection from $\L^q(\Omega)$ to $\L^q_{av}(\Omega)$, i.e., $P_{av} u := u - \frac{1}{|\Omega|} \int_\Omega u \; \d x$. 
We then introduce the  operators $A_i$ and $A_e$ by   
\begin{align*}
A_{i,e} u  &:= - \nabla \cdot (\sigma_{i,e} \nabla u), \\
D(A_{i,e}) &:= \left\{ u \in \W^{2,q} (\Omega) \cap \L^q_{av}(\Omega) : \sigma_{i,e} \nabla u\cdot \nu =0 {\rm~a.e.~on~}\partial \Omega \right\} \subset \L^q_{av}(\Omega),
\end{align*}
where $A_{i , e}$ and $\sigma_{i , e}$ indicates that either $A_i$ and $\sigma_i$ or $A_e$ and $\sigma_e$ are considered.
Due to condition~\eqref{Eq: EV} we obtain  $D (A_i) = D (A_e)$ and thus, it is possible to define the sum  $A_i + A_e$ of $A_i$ and $A_e$ with the domain $D(A_i)= D(A_e)$.
Note that the inverse operator $(A_i + A_e)^{-1}$ on $\L^q_{av} (\Omega)$ is a bounded linear operator. 

Following~\cite{GK} we define the bidomain operator as follows. Let $\sigma_i$ and $\sigma_e$ satisfy Assumption~\ref{Ass: Coefficients}. Then the bidomain operator $A$ is defined as 
\begin{align}
A = A_i (A_i+A_e)^{-1} A_e P_{av}
\end{align}
with domain 
\begin{align*}
D(A) := \{u \in \W^{2,q}(\Omega) : \nabla u \cdot \nu = 0 {\rm~a.e.~on~} \partial \Omega\}.
\end{align*}

The following resolvent estimates for $A$ were proven by Giga and Kajiwara  in~\cite{GK}. Here, denote for $\theta \in (0,\pi]$ the sector $\Sigma_{\theta}:=\{\lambda \in \IC \setminus \{0\}: |\mathrm{arg} \lambda | < \theta\}$.

\begin{proposition}[{\cite[Theorem 4.7, Theorem 4.9]{GK}}]\label{Thm: EU}
\label{Prop: Giga-Kajiwara}
Let $1 < q < \infty$, $\Omega$ be a bounded $\C^2$-domain and let $\sigma_i$ and $\sigma_e$ satisfy Assumption~\ref{Ass: Coefficients}. 
Then, for $\lambda \in \Sigma_{\pi}$ and $f \in \L^q(\Omega)$, the resolvent problem 
\begin{align}
(\lambda + A) u = f \quad {\rm in} \ \Omega
\end{align}
has a unique solution $u \in D(A)$. Moreover, for each $\varepsilon \in (0 , \pi / 2)$ there exists a constant $C > 0$ such that for all $\lambda \in \Sigma_{\pi - \varepsilon}$ and all 
$ f \in \L^q(\Omega)$ the unique solution $u \in D(A)$ satisfies 
\begin{align*}
|\lambda|  \| u \|_{\L^q (\Omega)}+|\lambda|^{1/2} \| \nabla u\|_{\L^q(\Omega)}+\| \nabla^2 u \|_{\L^q(\Omega)} \leq C \| f \|_{\L^q(\Omega)}.
\end{align*}
\end{proposition}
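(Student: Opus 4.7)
The proof rests on the remarkable algebraic identity
\begin{equation*}
A = A_i(A_i+A_e)^{-1} A_e = \bigl( A_i^{-1} + A_e^{-1} \bigr)^{-1}
\end{equation*}
on $\L^q_{av}(\Omega)$, which holds for any invertible (not necessarily commuting) $A_i, A_e$ with $A_i+A_e$ invertible: the bidomain operator is the \emph{parallel sum} of the two elliptic Neumann operators. The first step is to develop the $\L^q$-theory of the individual factors. On $\L^q_{av}(\Omega)$ both $A_i$ and $A_e$ are uniformly elliptic, symmetric, $\C^1$-coefficient, second-order divergence-form operators with Neumann boundary conditions on a bounded $\C^2$-domain, and classical Agmon--Douglis--Nirenberg theory yields, for every $\varepsilon' > 0$, sectoriality in $\Sigma_{\pi - \varepsilon'}$ together with the full resolvent estimate $|\lambda|\|v\|_{\L^q} + |\lambda|^{1/2}\|\nabla v\|_{\L^q} + \|\nabla^2 v\|_{\L^q} \leq C\|g\|_{\L^q}$ for solutions of $(\lambda + A_{i,e})v = g$. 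In particular $B := A_i^{-1} + A_e^{-1}$ is bounded on $\L^q_{av}$, and on $\L^2_{av}$ it is self-adjoint and non-negative with spectrum contained in $[0, \|B\|]$.

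The mean-nonzero part of the resolvent equation $(\lambda + A)u = f$ separates at once since $A$ maps $D(A)$ into $\L^q_{av}$, so $\bar u = \bar f / \lambda$ where $\bar{\cdot}$ denotes the mean; on $\L^q_{av}$ the parallel-sum identity rewrites the resolvent as
\begin{equation*}
(\lambda + A)^{-1} = B(\Id + \lambda B)^{-1} = \tfrac{1}{\lambda}\bigl( \Id - (\Id + \lambda B)^{-1} \bigr).
\end{equation*}
Hence the bound reduces to uniform control of $(\Id + \lambda B)^{-1}$ over $\lambda \in \Sigma_{\pi-\varepsilon}$. On $\L^2_{av}$ this is immediate: the scalar inequality $|1 + \lambda b| \geq \sin\varepsilon$ for every $b \geq 0$ and $\lambda \in \Sigma_{\pi - \varepsilon}$ combined with the spectral theorem gives $\|(\Id + \lambda B)^{-1}\|_{\L^2_{av}} \leq (\sin \varepsilon)^{-1}$, whence $\|(\lambda + A)^{-1}\|_{\L^2_{av}} \leq C_\varepsilon / |\lambda|$.

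Extending this bound to $\L^q_{av}$ for $1 < q < \infty$ uses the bounded $H^\infty$-calculus of $A_i$ and $A_e$ on $\L^q_{av}$, available for symmetric Neumann operators with $\C^1$ coefficients on $\C^2$ domains via Gaussian heat-kernel bounds and Calder\'on--Zygmund theory. The functional calculus of $B$ then transfers from $\L^2_{av}$ to $\L^q_{av}$ and preserves the resolvent bound on $(\Id + \lambda B)^{-1}$. To recover the $\nabla u$ and $\nabla^2 u$ contributions, set $v := (A_i + A_e)^{-1} A_e u \in D(A_i)$ and $w := u - v \in D(A_e)$; then $A_i v = A_e w$ and $\lambda u + A_i v = f$. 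Applying the individual $\W^{2,q}$-resolvent estimate of the first step to $v$ and $w$ with right-hand side $f - \lambda u \in \L^q$ controls $\|\nabla^2 v\|_{\L^q}$ and $\|\nabla^2 w\|_{\L^q}$, hence $\|\nabla^2 u\|_{\L^q}$; the $|\lambda|^{1/2}\|\nabla u\|_{\L^q}$ term follows by Gagliardo--Nirenberg interpolation.

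The principal obstacle is the non-commutativity of $A_i$ and $A_e$ outside the monodomain regime $\sigma_i = \kappa \sigma_e$, which rules out any direct algebraic factorization $\lambda + A = (\lambda + A_i)(\cdots)$ of the resolvent. The parallel-sum identity serves as the correct algebraic substitute, but propagating the resulting bound from $\L^2$ to $\L^q$ ultimately rests on the bounded $H^\infty$-calculus of the underlying elliptic Neumann operators — the genuinely harmonic-analytic input that constitutes the technical heart of the argument.
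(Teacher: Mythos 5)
The paper offers no proof of this proposition: it is imported verbatim from Giga--Kajiwara \cite{GK}, so your attempt must be judged on its own terms. Your algebraic skeleton is sound. The parallel-sum identity $A=(A_i^{-1}+A_e^{-1})^{-1}$ on $\L^q_{av}(\Omega)$ is correct (both factorizations $A_i^{-1}+A_e^{-1}=A_i^{-1}(A_i+A_e)A_e^{-1}=A_e^{-1}(A_i+A_e)A_i^{-1}$ hold), the reduction of the resolvent to $(\Id+\lambda B)^{-1}$ with $B=A_i^{-1}+A_e^{-1}$ is valid, the spectral-theorem bound on $\L^2_{av}$ via $|1+\lambda b|\geq\sin\varepsilon$ is fine, and the recovery of the $\nabla u$ and $\nabla^2 u$ terms from the individual elliptic estimates for $A_iv=A_ew=f-\lambda u$, once $|\lambda|\|u\|_{\L^q}\leq C\|f\|_{\L^q}$ is known, works.

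The gap is the passage from $\L^2$ to $\L^q$. You assert that the bounded $\H^\infty$-calculus of $A_i$ and $A_e$ lets ``the functional calculus of $B$ transfer'' to $\L^q_{av}$. But $B=A_i^{-1}+A_e^{-1}$ is a sum of two \emph{non-commuting} operators; it lies in the functional calculus of neither factor, and on $\L^q$ with $q\neq 2$ it is merely a bounded operator with no a priori spectral localization. A bound on $(\Id+\lambda B)^{-1}$ uniform over $\lambda\in\Sigma_{\pi-\varepsilon}$ is precisely the assertion that $A=B^{-1}$ is sectorial of angle $0$ on $\L^q_{av}$ --- the statement to be proved --- so the argument is circular at its one non-trivial point. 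The operator-sum theorems that would convert sectoriality of $A_i^{-1}$ and $A_e^{-1}$ into sectoriality of their sum (Da Prato--Grisvard, Kalton--Weis) all require resolvent commutativity or commutator conditions that are unavailable here; you correctly identify non-commutativity as ``the principal obstacle'' and then assume it away. The cited proof supplies the missing harmonic analysis by entirely different means: reduction to constant-coefficient model problems on the whole and half space, explicit analysis of the non-local bidomain symbol $\langle\sigma_i\xi,\xi\rangle\langle\sigma_e\xi,\xi\rangle/\langle(\sigma_i+\sigma_e)\xi,\xi\rangle$ via Fourier multiplier estimates, and a localization/perturbation argument to return to variable coefficients on the bounded domain. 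Either reproduce an argument of that type or give a genuine proof that $B$ is sectorial on $\L^q_{av}$; as written, the central estimate is unproved.
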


Observe that the proposition above implies in particular that $- A$ generates a bounded analytic semigroup $\e^{-tA}$ on $\L^q(\Omega)$. 

Under the assumption of the conservation of currents, i.e., 
\begin{align}
\int_\Omega(I_i(t)+I_e(t))\; \d x=0, \quad t\geq 0 \label{Eq: CC}
\end{align}
and assuming moreover $\int_\Omega u_e \; \d x=0$, the bidomain equations~\eqref{Eq: BDE} may be equivalently rewritten as an evolution equation~\cite{MR2451724,GK} of the form 
\begin{align*}
\tag{ABDE}\label{Eq: ABDE}
\left\{
\begin{aligned}
\partial_{t}u + Au + F(u,w)&=I, \qquad &\mathrm{in} \ (0,\infty),\\
\partial_{t}w + G(u,w)&=0, &\mathrm{in} \ (0,\infty),\\
u (0)&= u_0,\\
w(0)&= w_0,
\end{aligned}
\right.
\end{align*}
where 
\begin{align}
\label{Eq: Modified source term}
 I:=I_i-A_i(A_i+A_e)^{-1}(I_i+I_e)
\end{align}
is the modified source term. The functions $u_e$ and $u_i$ can be recovered from $u$ by virtue of the following relations
\begin{align*}
u_e&=(A_i+A_e)^{-1}\{(I_i+I_e)-A_iP_{av}u\},\\
u_i&=u+u_e.
\end{align*}
Our main results on the unique existence of strong $T$-periodic solutions to~\eqref{Eq: ABDE} are formulated in the real interpolation space $D_A(\theta,p)$ between $D(A)$ and the 
underlying space $\L^q(\Omega)$. This choice of spaces is motivated by our aim to prove the existence and uniqueness of $T$-periodic solutions to the bidomain equations in the 
{\em strong}, and not only in the  mild sense. The classical Da Prato--Grisvard theorem ensures the maximal $\L^p$-regularity for parabolic evolution equations in these spaces and our approach 
is based on a {\em periodic version of the Da Prato--Grisvard theorem}.

More specifically, let $X$ be a Banach space and $-\cA$ be the generator of a bounded analytic semigroup $\e^{-t \cA}$ on $X$ with domain $D(\cA)$. 
For $\theta \in (0 , 1)$ and $1 \leq p < \infty$, we denote by $D_{\cA}(\theta,p)$ space defined as  
\begin{align}\label{Eq: Characterization real interpolation space}
D_{\cA}(\theta , p) := \Big\{ x \in X : [x]_{\theta,p}:= \Big(\int_0^\infty\|t^{1 - \theta} \cA \e ^{- t \cA} x\|_X^p \; \frac{\d t}{t}\Big)^{1/p} < \infty \Big\}.
\end{align}
When equipped with the norm $\|x\|_{\theta,p}:=\|x\| + [x]_{\theta,p}$, the space $D_{\cA}(\theta,p)$ becomes a Banach space. For details and more on interpolation spaces we refer, e.g., 
to~\cite{Lun95, Lun09}. It is well-known that $D_{\cA} (\theta , p)$ coincides with the real interpolation space $(X , D(\cA))_{\theta , p}$ and that the respective norms are equivalent. If 
$0 \in \rho (\cA)$, then the real interpolation space norm is equivalent to the homogeneous norm $[\cdot]_{\theta,p}$, see~\cite[Corollary 6.5.5]{haase2006}. Consider in particular the 
bidomain operator $A$ in $X=L^q(\Omega)$ for $1 < q < \infty$. Then, following Amann~\cite[Theorem 5.2]{Ama93}, the space $(X , D(A))_{\theta , p}$ can be characterized as 
\begin{align}\label{besov}
(\L^q(\Omega),D(A))_{\theta, p} = \B^{2\theta}_{q,p}(\Omega), \quad 1\leq p \leq \infty,
\end{align}
provided $2\theta  \in (0,1 + 1/q)$. Here $\B^s_{q,p}(\Omega)$ denotes, as usual, the Besov space of order $s \geq 0$.

For $0 < T < \infty$, we define the solution space $\IE_{\cA}^{\per}$ as  
\begin{align*}
  \IE_{\cA}^{\per} := \{ u \in \W^{1 , p} (0 , T ; D_{\cA}(\theta , p)) : \cA u \in \L^p(0 , T ; D_{\cA}(\theta , p)) \text{ and } u(0) = u(T) \}
\end{align*}
with norm
\begin{align*}
  \| u \|_{\IE_{\cA}^{\per}} := \| u \|_{\W^{1 , p} (0 , T ; D_{\cA}(\theta , p))} + \| \cA u \|_{\L^p(0 , T ; D_{\cA} (\theta , p))}, 
\end{align*}
which corresponds to the data space
\begin{align*}
 \IF_{\cA} := \L^p(0,T;D_{\cA}(\theta,p)).
\end{align*}
  
In our situation, where $A$ denotes the bidomain operator, the solution space for the transmembrane potential $u$ reads as
\begin{align*}
  \IE_A^{\per} = \{ u \in \W^{1 , p} (0 , T ; D_A(\theta , p)) : A u \in \L^p(0 , T ; D_A(\theta , p)) \text{ and } u(0) = u(T) \}.
\end{align*}
The solution space for the gating variable $w$ is defined as
\begin{align*}
 \IE_w^{\per} := \{ w \in \W^{1 , p}(0 , T ; D_A(\theta , p)) : w(0) = w(T) \}.
\end{align*}
Then, the solution space for the periodic bidomain system is defined as the product space
\begin{align*}
\IE := \IE_A^{\per} \times \IE_w^{\per}.
\end{align*}

Finally, for a Banach space $X$ we denote by $\IB^X(u^*,R)$ the closed ball in $X$ with center $u^*\in X$ and radius $R>0$, i.e., 
\begin{align*}
\IB^X(u^*,R):=\{u \in X: \|u-u^*\|_X \leq R\}.
\end{align*}

\section{Main results for various models}\label{Sec: Main results}

In this section we state our main results concerning the existence and uniqueness of strong $T$-periodic solutions to the bidomain equations subject to various models of the ionic 
transport. Notice that the respective models treated here are slightly more general as described in the introduction, as an additional parameter $\eps > 0$ is introduced, that incorporates the phenomenon of fast and slow diffusion. \par
Additionally to Assumption~\ref{Ass: Coefficients} on the conductivity matrices of the bidomain operator $A$, we require the following regularity and periodicity conditions on the forcing term $I$.

\begin{assumption}{P}
\label{Ass: Forcing}
Let $1 \leq p < \infty$ and $n < q < \infty$ satisfy $1/p + n/(2q) \leq 3/4$. 
Assume $I: \IR \to D_A(\theta,p)$ is a $T$-periodic function satisfying  $I_{|(0,T)} \in \IF_A$ for some $\theta \in (0,1/2)$ and $T > 0$.
\end{assumption}

\begin{remark}
If $\Omega$ has a $\C^4$-boundary and if the conductivity matrices $\sigma_i$ and $\sigma_e$ lie in $\W^{3 , \infty} (\Omega ; \IR^{n \times n})$, then Assumption~\ref{Ass: Forcing} is satisfied by virtue of~\eqref{Eq: Modified source term} if $I_i , I_e: \IR \to D_A(\theta,p)$ are $T$-periodic functions satisfying $I_{i|_{(0,T)}}$ and $I_{e|_{(0,T)}} \in \IF_A$. Indeed, this follows by real interpolation since $A_i (A_i + A_e)^{-1}$ is bounded on $\L^q_{av} (\Omega)$ and from $D(A)\cap \L^q_{av} (\Omega)$ into $\W^{2 , q} (\Omega)\cap \L^q_{av} (\Omega)$.
\end{remark}

We start with the most classical model due to FitzHugh and Nagumo.

\subsection{The periodic bidomain FitzHugh--Nagumo model}\label{Subsec: FitzHugh--Nagumo}\mbox{}\\

For $T > 0$, $0 < a < 1$, and $b , c , \eps > 0$, the periodic bidomain FitzHugh--Nagumo equations are given by
\begin{align}
\left\{
\begin{aligned}
\partial_t u+\varepsilon Au&=I - \frac{1}{\varepsilon}[u^3-(a+1)u^2+au+w] \qquad&{\rm in}\ \IR\times \Omega,\\
\partial_t w &=cu-bw &{\rm in}\ \IR\times \Omega,\\
u(t) &= u(t+T) &{\rm in} \ \IR \times \Omega,\\
w(t) &= w(t+T) &{\rm in} \ \IR \times \Omega.
\end{aligned}\label{Eq: bfhn}
\right.
\end{align}
This system has three  equilibrium points, the trivial one  $(u_1,w_1) = (0,0)$ and two others given by  $(u_2,w_2)$ and $(u_3,w_3)$, where
\begin{align}
u_2=\frac12(a+1-d), \quad w_2= \frac{c}{2b}(a+1-d), \quad   u_3=\frac12(a+1 + d), \quad &w_3= \frac{c}{2b} (a+1+d), 
\end{align}
and $d=\sqrt{(a+1)^2-4(a+\frac{c}{b})}$.  We assume that the following stability condition $\mathrm{(S_{FN})}$ on the coefficients is satisfied:
\begin{align} \tag{S\textsubscript{FN}} \label{Eq: SFN}
 c< b \left(\frac{(a-1)^2}{4} - a\right)\quad \text{and} \quad u_3> \frac13\left(a+1+\sqrt{(a+1)^2-3a}\right).
\end{align}

Our result on strong periodic solutions to the bidomain FitzHugh--Nagumo equations reads then as follows.

\begin{theorem}\label{Thm: FitzHugh-Nagumo}
Let $\Omega \subset \IR^n$, $n \geq 2$, be a bounded $\C^2$-domain and suppose that Assumptions~\ref{Ass: Coefficients} and~\ref{Ass: Forcing} hold true.
\begin{enumerate}
\item[a)] Then there exist constants $R>0$ and $C(R)>0$ such that if $\|I\|_{\IF_A} <C(R)$, the equation~\eqref{Eq: bfhn} admits a unique T-periodic strong 
solution $(u,w)$ with $(u , w)_{|(0 , T)} \in \IB^{\IE}((0,0),R)$. \\
\item[b)] If condition~\eqref{Eq: SFN} is satisfied, then there exist constants $R>0$ and $C(R)>0$ such that if $\|I\|_{\IF_A}<C(R)$, the equation~\eqref{Eq: bfhn} admits a unique T-periodic strong solution $(u,w)$ with $(u,w)_{|(0 , T)} \in \IB^{\IE}((u_3,w_3),R)$. 
\end{enumerate}
\end{theorem}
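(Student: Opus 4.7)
The plan is to linearize \eqref{Eq: bfhn} around the equilibrium $(u^*, w^*)$---with $(u^*, w^*) = (0, 0)$ for part~(a) and $(u^*, w^*) = (u_3, w_3)$ for part~(b)---and to apply the semilinear periodic Da Prato--Grisvard framework of Sections~\ref{Sec: Da Prato--Grisvard} and~\ref{Sec: The periodic solution} via a contraction mapping principle in a small ball around the equilibrium. Setting $v := u - u^*$ and $z := w - w^*$ and using that $F(u^*, w^*) = 0 = G(u^*, w^*)$ together with the linearity of $G$, a direct Taylor expansion yields
\begin{align*}
F(v + u^*, z + w^*) = F_u(u^*) v + z + (3u^* - (a+1)) v^2 + v^3, \qquad G(v + u^*, z + w^*) = bz - cv,
\end{align*}
with $F_u(u^*) = 3u^{*2} - 2(a+1) u^* + a$. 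The shifted system then reads
\begin{align*}
\partial_t \begin{pmatrix} v \\ z \end{pmatrix} + \cA^* \begin{pmatrix} v \\ z \end{pmatrix} = \begin{pmatrix} I - \eps^{-1} N(v) \\ 0 \end{pmatrix}, \quad (v, z)(0) = (v, z)(T),
\end{align*}
where $N(v) := v^3 + (3u^* - (a+1)) v^2$ and
\begin{align*}
\cA^* := \begin{pmatrix} \eps A + \eps^{-1} F_u(u^*)\, \Id & \eps^{-1} \Id \\ -c\, \Id & b\, \Id \end{pmatrix}
\end{align*}
acts on $X := \L^q(\Omega) \times \L^q(\Omega)$ with natural domain $D(A) \times \L^q(\Omega)$.

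Next I would verify that $\cA^*$ fits the abstract framework. Since $\cA^*$ is a bounded matrix-valued perturbation of $\mathrm{diag}(\eps A, 0)$, the sectoriality of $A$ provided by Proposition~\ref{Prop: Giga-Kajiwara} together with a standard perturbation argument shows that $-\cA^*$ generates a bounded analytic semigroup on $X$. Invertibility of $\cA^*$ reduces, modulo the sectoriality of $\eps A$, to a spectral property of the scalar $2 \times 2$ ODE block, whose trace $F_u(u^*)/\eps + b$ and determinant $(b F_u(u^*) + c)/\eps$ must both be strictly positive. For part~(a), $F_u(0) = a > 0$ settles both immediately. For part~(b), the second inequality in~\eqref{Eq: SFN} is equivalent (via the cubic identity $u_3^2 = (a+1) u_3 - (a + c/b)$ for $u_3$) to $F_u(u_3) > 0$, while the first inequality quantifies this positivity in a way that produces a uniform spectral gap for $\cA^*$; in both cases the semigroup generated by $-\cA^*$ is exponentially stable and $0 \in \rho(\cA^*)$. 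Combining this with the identification~\eqref{besov} of the real interpolation spaces of $A$ and with the compatibility of real interpolation with products and bounded perturbations, the periodic linear Da Prato--Grisvard theorem of Section~\ref{Sec: Da Prato--Grisvard} then yields a bounded solution operator $\cS$ for the linearized $T$-periodic problem, mapping the data space into the solution space $\IE$.

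The heart of the proof, and its main technical obstacle, is the nonlinear fixed-point step. Define
\begin{align*}
\Phi : \IB^{\IE}((0, 0), R) \to \IE, \qquad \Phi(v, z) := \cS\bigl(I - \eps^{-1} N(v),\, 0\bigr).
\end{align*}
To show that $\Phi$ is a self-map and a strict contraction for $R$ and $\|I\|_{\IF_A}$ sufficiently small, one needs multiplicative estimates of the form $\|v^k\|_{\L^p(0, T; D_A(\theta, p))} \lesssim \|v\|_{\IE_A^{\per}}^k$ for $k = 2, 3$, together with their Lipschitz counterparts. These rely on the time-trace embedding of $\IE_A^{\per}$ into $\C([0, T]; D_A(\theta + (1 - \theta)(1 - 1/p), p))$, the Sobolev embedding of the resulting Besov space (via~\eqref{besov}) into $\L^\infty(\Omega)$, and the pointwise-multiplier (Banach algebra) property of $\B^{2\theta}_{q, p}(\Omega)$. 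Assumption~\ref{Ass: Forcing}, in particular $n < q$ and $1/p + n/(2q) \leq 3/4$, is designed precisely so that $\theta \in (0, 1/2)$ can be chosen with all three of these ingredients valid simultaneously; verifying the multiplicative estimates in $D_A(\theta, p)$ (and in particular their compatibility with the Neumann-type boundary condition encoded in the domain) is the most delicate point. Once these estimates are in place, a routine computation yields
\begin{align*}
\|\Phi(v, z)\|_{\IE} \leq C \bigl(\|I\|_{\IF_A} + R^2 + R^3\bigr), \qquad \|\Phi(v, z) - \Phi(\tilde v, \tilde z)\|_{\IE} \leq C(R + R^2)\, \|(v, z) - (\tilde v, \tilde z)\|_{\IE},
\end{align*}
so that fixing $R$ and then $\|I\|_{\IF_A} < C(R)$ small enough yields a unique fixed point by Banach's contraction principle, which is the desired strong $T$-periodic solution to~\eqref{Eq: bfhn}. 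Part~(b) follows by the same argument with the linearization performed at $(u_3, w_3)$.
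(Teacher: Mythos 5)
Your overall strategy coincides with the paper's: linearize at the equilibrium, absorb the linear part into an operator matrix, verify the nonlinearity assumptions, and contract in the periodic maximal-regularity space furnished by the periodic Da Prato--Grisvard theorem. The main gap is the step you dispose of with ``a standard perturbation argument.'' A bounded perturbation of the generator of a bounded analytic semigroup generates an analytic semigroup, but in general not a \emph{bounded} one, and not one whose negative generator has $0$ in its resolvent set --- both of which are indispensable hypotheses of Theorem~\ref{Thm: Da Prato-Grisvard periodic} (the integral \eqref{Eq: Definition of periodic u} only converges because of the resulting exponential decay). Nor does positivity of the trace and determinant of the scalar ODE block settle the matter: the spectrum of the coupled matrix is not the union of the spectrum of $\eps A+\eps^{-1}F_u(u^*)$ and that of the ODE block. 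The paper devotes Lemma~\ref{Lem: operator matrix} to exactly this point: one factors $(\lambda+\cA)^{-1}$ through $\bigl(\lambda+\frac{bc}{\lambda+d}+B\bigr)^{-1}$ and must show that the shifted spectral parameter $\lambda+\frac{bc}{\lambda+d}$ stays in the sector of $B$ for $\lambda$ in a sector of angle greater than $\pi/2$, with uniform resolvent bounds. Since $0\in\sigma(A)$, this hinges on $F_u(u^*)>0$, which is where \eqref{Eq: SFN} enters: the second inequality gives $F_u(u_3)>0$, while the first ensures $d$ is real so that $(u_3,w_3)$ exists --- it does not ``quantify a spectral gap.''

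Two further points. First, your ground space $X=\L^q(\Omega)\times\L^q(\Omega)$ does not match the claimed solution class: with $X_2=\L^q(\Omega)$ the Da Prato--Grisvard machinery yields $z\in\W^{1,p}(0,T;(\L^q,\L^q)_{\theta,p})=\W^{1,p}(0,T;\L^q(\Omega))$, whereas $\IE_w^{\per}$ requires $w\in\W^{1,p}(0,T;D_A(\theta,p))$. The paper takes $X_2=D_A(\theta,p)$ from the outset (this choice is also what makes the inverse in Lemma~\ref{Lem: operator matrix} map $X$ onto the domain); in the FitzHugh--Nagumo case you could recover the stronger regularity of $z$ a posteriori from the ODE $z'=cv-bz$, but this needs to be said. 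Second, the Banach-algebra property of $\B^{2\theta}_{q,p}(\Omega)$ that you invoke for the quadratic and cubic estimates fails for small $\theta$ (it would require $2\theta>n/q$, which Assumption~\ref{Ass: Forcing} does not provide); the paper instead uses the asymmetric product estimate of Lemma~\ref{Lem: Multiplication by Besov functions} together with $D(A)\subset\W^{2,q}(\Omega)\subset\W^{1,\infty}(\Omega)$ and the mixed derivative theorem, which is precisely why the condition $1/p+n/(2q)\le 3/4$ appears in the hypotheses.
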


\subsection{The periodic bidomain Aliev--Panfilov model}\label{Subsec: Aliev--Panfilov}\mbox{}\\

For $T > 0$, $0 < a < 1$, and $d , k , \eps > 0$, the periodic bidomain Aliev--Panfilov equations are given by
\begin{align}
\left\{
\begin{aligned}
\partial_t u+ \varepsilon Au&=I-\frac{1}{\varepsilon}[ku^3-k(a+1)u^2+kau+uw] \qquad &{\rm in}\ \IR\times \Omega,\\
\partial_t w &= -(ku(u-1-a)+dw) &{\rm in}\ \IR\times \Omega,\\
u(t) &= u(t+T) &{\rm in} \ \IR \times \Omega,\\
w(t) &= w(t+T) &{\rm in} \ \IR \times \Omega.
\end{aligned} \label{Eq: bap}
\right.
\end{align}

This system has only one stable equilibrium point, namely the trivial solution $(u_1,w_1) = (0,0)$. Our theorem on the existence and uniqueness of strong, periodic solutions to the 
periodic bidomain Aliev--Panfilov equations reads as follows.

\begin{theorem}\label{Thm: Aliev-Panfilov}
Let $\Omega \subset \IR^n$, $n \geq 2$, be a bounded $\C^2$-domain and suppose that Assumptions~\ref{Ass: Coefficients} and~\ref{Ass: Forcing} hold true. Then, there exist constants $R>0$ and $C(R)>0$ such that if $\|I\|_{\IF_A} <C(R)$, the equation~\eqref{Eq: bap} admits a unique T-periodic strong solution $(u , w)$ with $(u , w)_{|(0 , T)} \in \IB^{\IE}((0,0),R)$. 
\end{theorem}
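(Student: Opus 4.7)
The plan is to mirror the strategy of Theorem~\ref{Thm: FitzHugh-Nagumo}(a) at the trivial equilibrium: reformulate \eqref{Eq: bap} as a semilinear abstract periodic problem on $X\times X$ with $X=\L^q(\Omega)$, invoke the periodic Da Prato--Grisvard theorem of Section~\ref{Sec: Da Prato--Grisvard} to solve its linearization in $\IE$, and close the nonlinear problem by Banach's fixed point theorem in $\IB^{\IE}((0,0),R)$ for small $R$. The equilibrium structure is simpler than in the FitzHugh--Nagumo case because only the trivial equilibrium is stable, so no shift is needed.

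Linearising at the origin produces the lower-triangular operator matrix
\begin{equation*}
\cA:=\begin{pmatrix} \eps A+\tfrac{ka}{\eps} & 0 \\[2pt] -k(1+a) & d \end{pmatrix}, \qquad D(\cA)=D(A)\times X,
\end{equation*}
and the nonlinearity $\cN(u,w):=\bigl(\tfrac{1}{\eps}(ku^{3}-k(a+1)u^{2}+uw),\, ku^{2}\bigr)$. Thanks to Proposition~\ref{Prop: Giga-Kajiwara} and the lower-triangular structure with bounded off-diagonal, $\cA$ is sectorial on $X\times X$ and its spectrum lies in a half-plane $\{\Re\lambda\geq \delta\}$ with $\delta>0$; in particular $0\in\rho(\cA)$, and $-\cA$ generates an exponentially decaying analytic semigroup. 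Applying the periodic Da Prato--Grisvard theorem of Section~\ref{Sec: Da Prato--Grisvard} with base space $D_A(\theta,p)\times D_A(\theta,p)$ then yields a bounded solution operator $\cS\colon\IF_A\times\IF_A\to\IE$ for the associated linear periodic problem, so that a strong $T$-periodic solution of \eqref{Eq: bap} is precisely a fixed point of
\begin{equation*}
\Phi(u,w):=\cS\bigl((I,0)-\cN(u,w)\bigr)
\end{equation*}
in $\IE$.

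The key analytical step is a superlinear bound for $\cN$ in $\IF_A\times\IF_A$. Via the maximal-regularity trace embedding
\begin{equation*}
\IE_A^{\per}\hookrightarrow \C([0,T];D_A(\theta+1-1/p,p)),
\end{equation*}
combined with the Besov identification~\eqref{besov} and Assumption~\ref{Ass: Forcing} (which forces the Sobolev index $2(\theta+1-1/p)$ to exceed $n/q$), one obtains $u\in \C([0,T];\L^\infty(\Omega))$ with $\|u\|_{\L^\infty_{t,x}}\lesssim\|u\|_{\IE_A^{\per}}$. A Moser-type paraproduct estimate in $\B^{2\theta}_{q,p}(\Omega)$ then controls $\|u^{2}\|_{\IF_A}$, $\|u^{3}\|_{\IF_A}$, and $\|uw\|_{\IF_A}$ by $C\|(u,w)\|_{\IE}^{2}$ and $C\|(u,w)\|_{\IE}^{3}$, with analogous bounds for the differences $\cN(u,w)-\cN(\widetilde u,\widetilde w)$.

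The main obstacle I anticipate is precisely this multiplicative estimate for the coupling $uw$: whereas $u$ enjoys the maximal-regularity trace into $\L^\infty_{t,x}$, the gating variable $w$ lies only in the weaker class $\W^{1,p}(0,T;D_A(\theta,p))\hookrightarrow \C([0,T];D_A(\theta,p))$ and is not a priori in $\L^\infty$. The remedy is to put $u$ in the $\L^\infty$-slot of the paraproduct decomposition, so that $\|uw(t)\|_{D_A(\theta,p)}\lesssim\|u(t)\|_{\L^\infty}\|w(t)\|_{D_A(\theta,p)}$, after which integration in $t$ and the trace embedding for $u$ close the estimate. Once this bound is in place, choosing $R>0$ small makes the nonlinear map $\Phi-\cS(I,0)$ a strict contraction on $\IB^{\IE}((0,0),R)$, and then choosing $C(R)>0$ small enough that $\|\cS(I,0)\|_{\IE}\leq R/2$ completes the Banach fixed point argument and delivers the unique strong $T$-periodic solution in $\IB^{\IE}((0,0),R)$.
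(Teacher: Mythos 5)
Your overall architecture is the one the paper uses: linearize at the trivial equilibrium, observe that the resulting lower-triangular operator matrix generates a bounded analytic semigroup with $0$ in its resolvent set (this is Lemma~\ref{Lem: operator matrix} with $b=0$), apply the periodic Da Prato--Grisvard theorem, and close with a contraction mapping in a small ball. However, the step you yourself single out as the key analytical one contains a genuine error. The inequality
\begin{align*}
 \| u(t) w(t) \|_{D_A(\theta , p)} \lesssim \| u(t) \|_{\L^{\infty}(\Omega)} \, \| w(t) \|_{D_A(\theta , p)}
\end{align*}
is false for $\theta>0$: multiplication by a merely bounded function does not preserve $\B^{2\theta}_{q,p}(\Omega)$. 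In a paraproduct decomposition $uw=T_uw+T_wu+R(u,w)$, putting $u$ in the ``$\L^\infty$-slot'' controls only $T_uw$; the terms $T_wu$ and $R(u,w)$ require positive smoothness of $u$ paired against some integrability or negative-order control of $w$, and since $w$ solves only an ODE it enjoys no spatial smoothing and is in general neither in $\L^\infty$ nor better than $\B^{2\theta}_{q,p}$. So the estimate you propose as ``the remedy'' does not close the coupling term. The correct remedy --- the one the paper encodes in Lemma~\ref{Lem: Multiplication by Besov functions} --- is to use that $u(t)\in D(A)\subset\W^{2,q}(\Omega)\hookrightarrow\W^{1,\infty}(\Omega)$ for a.e.\ $t$ (here $q>n$ enters), which gives $\|uw\|_{\B^{2\theta}_{q,p}}\leq C\|u\|_{\W^{1,\infty}}\|w\|_{\B^{2\theta}_{q,p}}$ by interpolating the elementary $\L^q$- and $\W^{1,q}$-product bounds (this is where $\theta<1/2$ is used), and then $\|uw\|_{\IF_A}\lesssim\|u\|_{\L^p(0,T;\W^{1,\infty})}\|w\|_{\L^\infty(0,T;\B^{2\theta}_{q,p})}\lesssim\|u\|_{\IE_A^{\per}}\|w\|_{\IE_w^{\per}}$. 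Your time-trace embedding into $\C([0,T];\L^\infty(\Omega))$ is adequate for the pure powers $u^2,u^3$, where the classical Moser estimate applies because both factors lie in $\L^\infty\cap\B^{2\theta}_{q,p}$, but it does not rescue $uw$.

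A second, smaller inconsistency: you set up the problem on $X\times X$ with $X=\L^q(\Omega)$ and $D(\cA)=D(A)\times X$, but then invoke the periodic Da Prato--Grisvard theorem ``with base space $D_A(\theta,p)\times D_A(\theta,p)$''. These are incompatible: for your matrix the interpolation space is $D_A(\theta,p)\times\L^q(\Omega)$, and with $w$ valued only in $\L^q(\Omega)$ the product $uw$ cannot land in the data space $D_A(\theta,p)$ of the first equation at all. The ground space for the gating variable must be $D_A(\theta,p)$ from the outset, which is precisely the choice $X_2=D_B(\theta,p)$ in Lemma~\ref{Lem: operator matrix}. With that choice, and with the product estimate corrected as above, your fixed-point scheme reproduces the paper's proof.
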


\subsection{The periodic bidomain Rogers--McCulloch model}\label{Subsec: Rogers--McCulloch}\mbox{}\\

For $T > 0$, $0 < a < 1$, and $b , c , d , \eps > 0$, the periodic bidomain Rogers--McCulloch equations are given by
\begin{align}
\left\{
\begin{aligned} 
\partial_t u+\varepsilon Au&=I-\frac{1}{\varepsilon}[bu^3-b(a+1)u^2+bau+uw] \qquad &{\rm in}\ \IR\times \Omega,\\
\partial_t w &= cu-dw \qquad &{\rm in}\ \IR\times \Omega,\\
u(t) &= u(t+T) &{\rm in} \ \IR \times \Omega,\\
w(t) &= w(t+T) &{\rm in} \ \IR \times \Omega.
\end{aligned} \label{Eq: bmc}
\right.
\end{align}

This system has three  equilibrium points, the trivial one  $(u_1,w_1) = (0,0)$ and two others given by  $(u_2,w_2)$ and $(u_3,w_3)$, where
\begin{align}
&u_2=\frac12(a+1-\frac{c}{bd}-e), \quad w_2= \frac{c}{2d}(a+1-\frac{c}{bd}-e),\\  &u_3=\frac12(a+1 -\frac{c}{bd}+ e), \quad w_3= \frac{c}{2d} (a+1-\frac{c}{bd}+e), 
\end{align}
and $e=\sqrt{\left(a+1-\frac{c}{bd}\right)^2-4a}$.  We assume that the following stability condition~\eqref{Eq: SRM} on the coefficients is satisfied:
\begin{align} \tag{S\textsubscript{RM}} \label{Eq: SRM}
 \sqrt{\big(a+1-\frac{c}{bd}\big)^2-4a}-\frac{c}{bd}>0.
\end{align}

Our theorem on the existence and uniqueness of strong periodic solutions to the periodic bidomain Rogers--McCulloch equations reads as follows.

\begin{theorem}
\label{Thm: Rogers-McCulloch}
Let $\Omega \subset \IR^n$, $n \geq 2$, be a bounded $\C^2$-domain and suppose that Assumptions~\ref{Ass: Coefficients} and~\ref{Ass: Forcing} hold true.
\begin{enumerate}
\item[a)] Then there exist constants $R>0$ and $C(R)>0$ such that if $\|I\|_{\IF_A} < C(R)$, the equation~\eqref{Eq: bmc} admits a unique T-periodic strong 
solution $(u,w)$ with $(u , w)_{|(0 , T)} \in \IB^{\IE}((0,0),R)$. \\
\item[b)] If condition~\eqref{Eq: SRM} is satisfied, then there exist constants $R>0$ and $C(R)>0$ such that if $\|I\|_{\IF_A}<C(R)$, the equation~\eqref{Eq: bmc} admits a unique T-periodic strong solution $(u,w)$ with $(u,w)_{|(0 , T)} \in \IB^{\IE}((u_3,w_3),R)$. 
\end{enumerate}
\end{theorem}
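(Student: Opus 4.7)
The plan is to follow the scheme of Theorems~\ref{Thm: FitzHugh-Nagumo} and~\ref{Thm: Aliev-Panfilov}: rewrite \eqref{Eq: bmc} around the targeted equilibrium as an abstract semilinear periodic problem $\partial_t (u,w) + \cA(u,w) = (I,0)^\top + \cN(u,w)$, verify that the linearized $2\times 2$ operator matrix $-\cA$ generates a bounded analytic semigroup on $D_A(\theta,p)\times D_A(\theta,p)$ with $0 \in \rho(\cA)$, apply the periodic Da Prato--Grisvard theorem of Section~\ref{Sec: Da Prato--Grisvard} to obtain maximal $\L^p$-regularity on $\IE$, and close a contraction argument via the semilinear framework of Section~\ref{Sec: The periodic solution}. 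For part~(a) at $(0,0)$ the linearization is triangular,
\begin{equation*}
  \cA_0 = \begin{pmatrix} \varepsilon A + \varepsilon^{-1}ba & 0 \\ -c & d \end{pmatrix},
\end{equation*}
with sectorial diagonal blocks by Proposition~\ref{Prop: Giga-Kajiwara} and the positivity of $ba,d$; the rest of the argument runs line by line parallel to the proof of Theorem~\ref{Thm: Aliev-Panfilov}, since the monomial structure of $\cN$ is identical up to constants.

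For part~(b), I would shift to $\tildu := u-u_3$, $\tildg := w-w_3$. The identities $G(u_3,w_3) = 0$ and $F(u_3,w_3) = 0$ with $u_3 \neq 0$ yield $w_3 = (c/d)u_3$ and $bu_3^2 - b(a+1)u_3 + ba + w_3 = 0$, from which
\begin{equation*}
  F_u(u_3,w_3) = bu_3\bigl(2u_3 - (a+1)\bigr) = bu_3\bigl(e - c/(bd)\bigr), \qquad F_w(u_3,w_3) = u_3,
\end{equation*}
with $e$ as in the theorem. Under \eqref{Eq: SRM} both $u_3$ and $e - c/(bd)$ are strictly positive, hence $F_u(u_3,w_3), F_w(u_3,w_3) > 0$. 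The shifted system reads $\partial_t(\tildu,\tildg) + \cA_3(\tildu,\tildg) = (I,0)^\top + \cN_3(\tildu,\tildg)$ with
\begin{equation*}
  \cA_3 = \begin{pmatrix} \varepsilon A + \varepsilon^{-1}F_u(u_3,w_3) & \varepsilon^{-1}F_w(u_3,w_3) \\ -c & d \end{pmatrix}
\end{equation*}
and $\cN_3(\tildu,\tildg) = \bigl(-\varepsilon^{-1}[b\tildu^3 + (3bu_3 - b(a+1))\tildu^2 + \tildu\tildg],\ 0\bigr)^\top$.

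The central spectral step is to show that $-\cA_3$ generates a bounded analytic semigroup with $0 \in \rho(\cA_3)$, and this is where I expect the main technical obstacle in part~(b) to lie. Writing $\cA_3 = \mathrm{diag}(\varepsilon A, 0) + \cB$, the bounded $2\times 2$ block $\cB$ has trace $\varepsilon^{-1}F_u(u_3,w_3) + d$ and determinant $\varepsilon^{-1}[dF_u(u_3,w_3) + cF_w(u_3,w_3)]$, both strictly positive under \eqref{Eq: SRM}, so the eigenvalues of $\cB$ lie in the open right half plane. Analyticity of the semigroup then follows from the sectoriality of $-\varepsilon A$ supplied by Proposition~\ref{Prop: Giga-Kajiwara} and standard bounded perturbation theory. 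For $0 \in \rho(\cA_3)$, a Schur-complement reduction of $\cA_3(\tildu,\tildg)^\top = (f,g)^\top$ eliminates $\tildg = (g + c\tildu)/d$ via the second row and reduces matters to
\begin{equation*}
  \bigl(\varepsilon A + \varepsilon^{-1}[dF_u(u_3,w_3) + cF_w(u_3,w_3)]/d\bigr)\tildu = f - \varepsilon^{-1}F_w(u_3,w_3)g/d,
\end{equation*}
whose strictly positive constant shift together with Proposition~\ref{Prop: Giga-Kajiwara} yields unique solvability.

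Once $\cA_3$ has been handled, the periodic Da Prato--Grisvard theorem supplies the linear maximal regularity theory on $\IE$, and it remains to verify that $\cN_3$ maps $\IE$ into $\IF_A$ with Lipschitz constant $O(R)$ on $\IB^\IE((0,0),R)$. The restriction $1/p + n/(2q) \leq 3/4$ in Assumption~\ref{Ass: Forcing}, combined with the identification $D_A(\theta,p) = \B^{2\theta}_{q,p}(\Omega)$ from \eqref{besov}, provides the embeddings of $\IE$ and the Besov space multiplier estimates needed to bound the cubic, quadratic, and bilinear products in $(\tildu,\tildg)$ in $\IF_A$; these are exactly the estimates used for the Aliev--Panfilov nonlinearity in the proof of Theorem~\ref{Thm: Aliev-Panfilov}. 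The quadratic and bilinear terms yield Lipschitz constants of order $R$ and the cubic one of order $R^2$, so that for $R$ sufficiently small and $\|I\|_{\IF_A} < C(R)$ chosen appropriately, the Banach fixed-point theorem delivers the unique $T$-periodic strong solution $(u,w)$ with $(u,w)_{|(0,T)} \in \IB^\IE((u_3,w_3),R)$.
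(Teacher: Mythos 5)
Your part~(a) and the global scheme of part~(b) match the paper: shift to the equilibrium, verify the cubic/quadratic/bilinear nonlinearity satisfies Assumption~\ref{Ass: Nonlinearity} via Proposition~\ref{Prop: Nonlinearities satisfy assumptions}, and close the fixed-point argument through Theorem~\ref{Thm: Abstract existence theorem}. Your computation of the linearization at $(u_3,w_3)$, including $F_u(u_3,w_3)=bu_3(e-c/(bd))$ and the positivity of $u_3$, $F_u$, $F_w$ under \eqref{Eq: SRM}, agrees with what the paper uses.

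The spectral step, however, has a gap. You argue that $-\cA_3$ generates an analytic semigroup by bounded perturbation of $\operatorname{diag}(\varepsilon A,0)$, and separately that $0\in\rho(\cA_3)$ via a Schur complement at $\lambda=0$. But Theorem~\ref{Thm: Da Prato-Grisvard periodic} (and hence Theorem~\ref{Thm: Abstract existence theorem}) requires a \emph{bounded} analytic semigroup with $0\in\rho(\cA_3)$, i.e.\ a uniform resolvent estimate $\|\lambda(\lambda+\cA_3)^{-1}\|\leq C$ on a full sector $\Sigma_\beta$ with $\beta>\pi/2$. Bounded perturbation gives this only for $\lvert\lambda\rvert$ large, and invertibility at $\lambda=0$ alone does not exclude spectrum of $\cA_3$ arbitrarily close to (or even on) the imaginary axis away from the origin. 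There is no compactness to appeal to: the second diagonal block of $\cA_3$ acts boundedly on $X_2=D_A(\theta,p)$, so the resolvent of $\cA_3$ is not compact and the spectrum need not be discrete. Relatedly, your observation that the constant block $\cB$ has positive trace and determinant (eigenvalues in the open right half-plane) is true but is not the hypothesis that makes the argument go: what Lemma~\ref{Lem: operator matrix} actually exploits is the sign structure $b,c\geq 0$, $d>0$, which lets one control $\lambda+\tfrac{bc}{\lambda+d}$ on the sector, not merely the eigenvalues of the $2\times2$ matrix.

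The fix is exactly to perform your Schur complement for \emph{all} $\lambda$ in a sector, not just at $\lambda=0$; this is what the paper's Lemma~\ref{Lem: operator matrix} does. It writes
\begin{equation*}
 (\lambda + \cA)^{-1} = (\lambda + d)^{-1} \begin{pmatrix} \lambda + d & -b \\ c & \lambda + B \end{pmatrix} \Big( \lambda + \frac{bc}{\lambda + d} + B \Big)^{-1},
\end{equation*}
verifies that $\lambda+\tfrac{bc}{\lambda+d}$ stays in the sectoriality sector of $B$ uniformly for $\lambda\in\Sigma_\beta$ (separately for $\lvert\lambda\rvert\geq M$ and $\lvert\lambda\rvert<M$), and then reads off the uniform resolvent bound, using the real interpolation characterization of $X_2$ to treat the off-diagonal $\Lop(X_1,X_2)$ entries. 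In your setup this means taking $B=\varepsilon A + \varepsilon^{-1}[3bu_3^2-2b(a+1)u_3+ba+w_3]$, $b=u_3/\varepsilon>0$, $c>0$, $d>0$, which Lemma~\ref{Lem: operator matrix} accepts precisely because \eqref{Eq: SRM} forces $u_3>0$ and the shift in $B$ to be strictly positive.
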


\subsection{The periodic bidomain Allen--Cahn equation}\label{Subsec: Allen--Cahn}\mbox{}\\

For $T > 0$, the periodic bidomain Allen--Cahn equation is given by
\begin{align}
\left\{
\begin{aligned}
\partial_t u +Au&=I+u-u^3 \qquad &{\rm in}\ \IR\times \Omega,\\
u(t) &= u(t+T) &{\rm in} \ \IR \times \Omega.
\end{aligned} \label{Eq: bac}
\right.
\end{align} 
This system has three equilibrium points, $u_1 = - 1$, $u_2 = 0$, and $u_3 = 1$ and our  theorem on the existence and uniqueness of strong, periodic solutions to the periodic bidomain 
Allen--Cahn equation reads as follows.

\begin{theorem}\label{Thm: Allen-Cahn}
Let $\Omega \subset \IR^n$, $n \geq 2$, be a bounded $\C^2$-domain and suppose that Assumptions~\ref{Ass: Coefficients} and~\ref{Ass: Forcing} hold true.
\begin{enumerate}
 \item[a)] Then, there exist constants $R>0$ and $C(R)>0$ such that if $\|I\|_{\IF_A} < C(R)$ the equation~\eqref{Eq: bac} admits a unique T-periodic strong solutions $u$ with $u_{|(0 , T)} \in \IB^{\IE_A^{\per}}(-1,R)$.
 \item[b)] Then, there exist constants $R>0$ and $C(R)>0$ such that if $\|I\|_{\IF_A} < C(R)$ the equation~\eqref{Eq: bac} admits a unique T-periodic strong solutions $u$ with $u_{|(0 , T)} \in \IB^{\IE_A^{\per}}(1,R)$.
\end{enumerate}
\end{theorem}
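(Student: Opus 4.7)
The plan is to linearize around the chosen equilibrium $u^* \in \{-1 , 1\}$ and recast \eqref{Eq: bac} as a semilinear periodic problem to which the framework developed in Section~\ref{Sec: The periodic solution} applies. Writing $u = u^* + v$, using $Au^* = 0$ (constants belong to $D(A)$ with vanishing normal derivative), and exploiting $(u^*)^3 = u^*$, the identity $u - u^3 = -2v - 3u^* v^2 - v^3$ converts \eqref{Eq: bac} into
\[
\partial_t v + \cA v = I + F(v), \qquad v(t) = v(t + T),
\]
where $\cA := A + 2$ and $F(v) := -3 u^* v^2 - v^3$. Since $-A$ generates a bounded analytic semigroup on $\L^q(\Omega)$ by Proposition~\ref{Prop: Giga-Kajiwara}, so does $-\cA$; the positive shift additionally ensures $0 \in \rho(\cA)$. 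Hence $\cA$ falls under the hypotheses of the periodic Da Prato--Grisvard theorem of Section~\ref{Sec: Da Prato--Grisvard}, and, since a bounded shift does not alter the real interpolation scale, $D_{\cA}(\theta , p) = D_A(\theta , p)$ with equivalent norms.

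The periodic maximal-regularity result then produces, for every $g \in \IF_A$, a unique $v_g \in \IE_A^{\per}$ solving $\partial_t v + \cA v = g$ with periodicity, together with a bound $\|v_g\|_{\IE_A^{\per}} \leq M \|g\|_{\IF_A}$. Defining the solution map $\Phi: \IE_A^{\per} \to \IE_A^{\per}$ by $\Phi(v) := v_{I + F(v)}$, fixed points of $\Phi$ are precisely the strong $T$-periodic solutions of \eqref{Eq: bac} shifted by $u^*$. I would then apply Banach's contraction principle on the closed ball $\IB^{\IE_A^{\per}}(0 , R)$, with $R$ small and $\|I\|_{\IF_A} < C(R)$ to be chosen.

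The technical core is to control the Nemytskii maps $v \mapsto v^2$ and $v \mapsto v^3$ on the solution space, namely to establish
\[
\|v^2\|_{\IF_A} \leq C \|v\|_{\IE_A^{\per}}^2 \qquad \text{and} \qquad \|v^3\|_{\IF_A} \leq C \|v\|_{\IE_A^{\per}}^3,
\]
together with the matching Lipschitz estimates on bounded sets. This rests on the trace embedding $\IE_A^{\per} \hookrightarrow \BC([0 , T] ; D_A(\theta , p))$ coming from maximal $\L^p$-regularity, on the identification~\eqref{besov} $D_A(\theta , p) = \B^{2\theta}_{q , p}(\Omega)$, and on the fact that under Assumption~\ref{Ass: Forcing} the parameters $\theta \in (0 , 1/2)$, $n < q < \infty$, $1/p + n / (2q) \leq 3 / 4$ imply both $\B^{2\theta}_{q , p}(\Omega) \hookrightarrow \L^\infty(\Omega)$ and the pointwise multiplier property of this Besov space, yielding $\|fg\|_{D_A(\theta , p)} \leq C \|f\|_{D_A(\theta , p)} \|g\|_{D_A(\theta , p)}$. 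Combined with the trivial embedding $\L^\infty(0 , T ; D_A(\theta , p)) \hookrightarrow \L^p(0 , T ; D_A(\theta , p)) = \IF_A$, this gives the polynomial bounds. The mechanism is exactly the one already deployed in Section~\ref{Sec: Examples} for the FitzHugh--Nagumo, Aliev--Panfilov, and Rogers--McCulloch systems; the only simplification here is the absence of a gating variable, which reduces the setup from a $2 \times 2$ operator matrix to a scalar equation.

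Given these multiplier estimates, the Lipschitz constant of $\Phi$ on $\IB^{\IE_A^{\per}}(0 , R)$ is of order $O(R)$, and choosing $C(R) > 0$ so that $M (C(R) + c R^2 + c R^3) \leq R$ secures the self-mapping property. Banach's theorem then produces a unique $v \in \IB^{\IE_A^{\per}}(0 , R)$, whence $u = u^* + v$ is the desired strong $T$-periodic solution lying in $\IB^{\IE_A^{\per}}(u^* , R)$. Parts (a) and (b) are handled by an identical argument since $u^* = -1$ and $u^* = 1$ produce the same linearized operator $\cA$; only the sign of the quadratic term in $F$ changes, which is immaterial to the estimates. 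The one genuine obstacle is the Nemytskii bound in the real interpolation norm $D_A(\theta , p)$, and this is precisely the point at which the admissibility range encoded in Assumption~\ref{Ass: Forcing} is used.
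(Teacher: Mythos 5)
Your overall strategy is exactly the one the paper uses in Section~6.4: substitute $u = u^* + v$ with $u^* \in \{-1,1\}$, observe that the linearization produces $\cA = A + 2$ (so that $0 \in \rho(\cA)$), and then invoke the periodic Da Prato--Grisvard theorem together with a contraction mapping in $\IE_A^{\per}$. The algebraic reduction $u - u^3 = -2v - 3u^*v^2 - v^3$ and the observation that the sign of the quadratic term is immaterial to the estimates both match the paper.

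However, there is a genuine gap in how you control the Nemytskii map. You assert that Assumption~\ref{Ass: Forcing} implies both $\B^{2\theta}_{q,p}(\Omega) \hookrightarrow \L^\infty(\Omega)$ and the Banach algebra inequality $\|fg\|_{\B^{2\theta}_{q,p}} \leq C\|f\|_{\B^{2\theta}_{q,p}}\|g\|_{\B^{2\theta}_{q,p}}$, and then deduce $\|v^3\|_{\IF_A} \lesssim \|v\|_{\L^\infty(0,T;D_A(\theta,p))}^3$. But the Besov embedding $\B^{2\theta}_{q,p}(\Omega) \hookrightarrow \L^\infty(\Omega)$ requires $2\theta > n/q$ (or $2\theta = n/q$ with $p = 1$), and the algebra property requires the same. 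Assumption~\ref{Ass: Forcing} imposes only $\theta \in (0,1/2)$, $n < q < \infty$, and $1/p + n/(2q) \leq 3/4$; it imposes no lower bound on $\theta$, so for small $\theta$ one has $2\theta < n/q$ and both claims fail. Consequently your bound for $v^2$ and $v^3$ in $\IF_A$ does not follow from the premises as stated.

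The paper avoids this issue in Proposition~\ref{Prop: Nonlinearities satisfy assumptions} (and the unnumbered proposition preceding it) by not relying on $\L^\infty(0,T;D_A(\theta,p))$ alone. Instead, it exploits the additional spatial regularity encoded in $\IE_A^{\per}$: since $Au \in \L^p(0,T;D_A(\theta,p)) \subset \L^p(0,T;\L^q(\Omega))$, one has $u \in \L^p(0,T;\W^{2,q}(\Omega))$, and combined with $u \in \W^{1,p}(0,T;\L^q(\Omega))$ the mixed derivative theorem (Lemma~\ref{Thm: Mixed derivative theorem}) places $u$ in $\W^{\sigma,p}(0,T;\W^{2(1-\sigma),q}(\Omega))$. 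For the powers $u^\alpha$ with $\alpha \in \{2,3\}$, the chain is then $\|u^\alpha\|_{\L^p(0,T;\B^{2\theta}_{q,p})} \lesssim \|u\|^\alpha_{\L^{\alpha p}(0,T;\W^{1,\alpha q})}$, using the (always valid for $2\theta < 1$) embedding $\W^{1,q}(\Omega) \hookrightarrow \B^{2\theta}_{q,p}(\Omega)$, and the condition $1/p + n/(2q) \leq 3/4$ is precisely what makes an admissible $\sigma$ exist. To repair your argument you should replace the algebra/$\L^\infty$ step by this mixed-derivative mechanism.
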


\section{A periodic version of the Da Prato--Grisvard theorem}
\label{Sec: Da Prato--Grisvard}

Let $X$ be a Banach space and $- \cA$ be the generator of a bounded analytic semigroup on $X$. Assume that  $\theta \in (0 , 1)$, $1 \leq p < \infty$, and $0 < T < \infty$. Then, for 
$f \in \L^p(0 , T ; D_{\cA}(\theta , p))$ we consider
\begin{align}
\label{Eq: Variation of constants}
 u(t) := \int_0^t \e^{- (t - s) \cA} f(s) \; \d s, \qquad 0 < t < T.
\end{align}
Then, $u$ is the unique mild solution to the abstract Cauchy problem
\begin{align} \tag{ACP} \label{Eq: ACP}
 \left\{ \begin{aligned}
  u^{\prime} (t) + \cA u (t) &= f(t), \qquad 0 < t < T \\
  u(0) &= 0
 \end{aligned} \right.
\end{align}
and fulfills, thanks to the classical Da Prato and Grisvard theorem~\cite{DPG75}, the following maximal regularity estimate.

\begin{proposition}[{\cite[Da Prato, Grisvard]{DPG75}}]\label{Thm: Da Prato-Grisvard}
Let $\theta \in (0 , 1)$, $1 \leq p < \infty$, and $0 < T < \infty$. Then there exists a constant $C > 0$ such that for all $f \in \L^p(0 , T ; D_{\cA}(\theta , p))$, the function $u$ given 
by~\eqref{Eq: Variation of constants} satisfies $u(t) \in D(\cA)$ for almost every $0 < t < T$ and
\begin{align*}
 \| \cA u \|_{\L^p(0 , T ; D_{\cA}(\theta , p))} \leq C \| f \|_{\L^p(0 , T ; D_{\cA}(\theta , p))}.
\end{align*}
\end{proposition}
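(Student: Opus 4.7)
The plan is to follow the classical Da Prato--Grisvard argument: differentiate the variation-of-constants formula by applying $\cA$, invoke the integral characterization \eqref{Eq: Characterization real interpolation space} of the interpolation seminorm, and reduce the whole estimate to a scale-invariant integral inequality on the half-line. Analyticity of $\e^{-t\cA}$ together with the embedding $D_{\cA}(\theta, p) \hookrightarrow X$ permits one to differentiate under the integral in \eqref{Eq: Variation of constants}, so that for almost every $t \in (0, T)$ one has $u(t) \in D(\cA)$ with
\begin{align*}
 \cA u(t) = \int_0^t \cA \e^{-(t-s) \cA} f(s) \, \d s.
\end{align*}
Modulo the lower-order part $\| \cA u(t) \|_X$, which can be absorbed by a shift $\cA + \omega$ with $\omega > 0$ on a bounded time interval, it suffices to bound
\begin{align*}
  \int_0^T \int_0^\infty \sigma^{(1-\theta) p} \bigl\| \cA \e^{-\sigma \cA} \cA u(t) \bigr\|_X^p \, \frac{\d \sigma}{\sigma} \, \d t
\end{align*}
by $C \int_0^T [f(t)]_{\theta, p}^p \, \d t$. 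Substituting the formula for $\cA u(t)$, interchanging the $s$- and $\sigma$-integrations, and setting $\tau = t - s$, one obtains
\begin{align*}
 \cA \e^{-\sigma \cA} \cA u(t) = \int_0^t \cA^2 \e^{-(\sigma + \tau) \cA} f(t - \tau) \, \d \tau.
\end{align*}

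The core step is the operator splitting
\begin{align*}
 \cA^2 \e^{-(\sigma + \tau) \cA} = \cA \e^{-(\sigma + \tau/2) \cA} \cdot \cA \e^{-\tau \cA / 2},
\end{align*}
in which one factor of $\cA$ is used through the analytic semigroup bound $\| \cA \e^{-(\sigma + \tau/2) \cA} \|_{\Lop(X)} \lesssim 1/(\sigma + \tau)$, while the other is kept as $\cA \e^{-\tau \cA / 2} f(t - \tau)$ so as to invoke the interpolation regularity of $f$. Extending $f$ by zero outside $(0, T)$, this produces the pointwise estimate
\begin{align*}
 \bigl\| \cA \e^{-\sigma \cA} \cA u(t) \bigr\|_X \leq C \int_0^\infty \frac{1}{\sigma + \tau} \bigl\| \cA \e^{-\tau \cA / 2} f(t - \tau) \bigr\|_X \, \d \tau.
\end{align*}
Multiplying by $\sigma^{1 - \theta}$, taking the $\L^p$-norm in $\sigma$ against $\d \sigma / \sigma$, and performing the multiplicative substitution $\tau = \sigma r$ reveals a dilation-invariant integral operator whose boundedness on $\L^p((0, \infty); \d \mu/\mu)$ follows from Minkowski's integral inequality (Schur's test for homogeneous kernels), with constant $\int_0^\infty r^{\theta - 1}/(1 + r) \, \d r$, which is finite for every $\theta \in (0, 1)$ with no endpoint issues. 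A final Fubini in $(t, \tau)$, a translation absorbed by the zero extension of $f$, and the rescaling $\eta = \tau/2$ identify the resulting right-hand side with $C \| f \|_{\L^p(0, T ; D_{\cA}(\theta, p))}^p$ via \eqref{Eq: Characterization real interpolation space}.

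The main obstacle is the correct distribution of the two factors of $\cA$ in the split of $\cA^2 \e^{-(\sigma + \tau) \cA}$. The naive operator bound $\| \cA^2 \e^{-(\sigma + \tau) \cA} \|_{\Lop(X)} \lesssim (\sigma + \tau)^{-2}$ throws away the interpolation regularity of $f$ and produces the divergent integral $\int_0^\infty \sigma^{- \theta p - 1} \, \d \sigma$ at the origin. The remedy is to keep one copy of $\cA$ acting on $f$ through the seminorm \eqref{Eq: Characterization real interpolation space}, and to let the other copy provide just enough operator-norm decay in the combined variable $\sigma + \tau$. Once this splitting is arranged, the remainder is the routine Schur/Minkowski computation on $(0, \infty)$ sketched above.
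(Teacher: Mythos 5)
Your proof is correct; note, however, that the paper states Proposition~\ref{Thm: Da Prato-Grisvard} without proof (it is imported from Da Prato--Grisvard), so there is no in-paper argument for this exact statement to compare against --- what you have written is the standard proof of the classical theorem. One step deserves tightening: to see that $\cA u(t)=\int_0^t\cA\e^{-(t-s)\cA}f(s)\,\d s$ converges, the embedding $D_{\cA}(\theta,p)\hookrightarrow X$ alone only yields the non-integrable bound $\|\cA\e^{-\tau\cA}f(s)\|_X\lesssim\tau^{-1}\|f(s)\|_X$; what you actually need is the interpolation bound $\|\cA\e^{-\tau\cA}f(s)\|_X\lesssim\tau^{\theta-1}\|f(s)\|_{\theta,p}$ (i.e.\ $D_{\cA}(\theta,p)\hookrightarrow D_{\cA}(\theta,\infty)$), after which Young's convolution inequality gives both the a.e.\ finiteness and the bound on the lower-order term $\|\cA u\|_{\L^p(0,T;X)}$ directly, with no need for the somewhat vague detour through the shift $\cA+\omega$. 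It is instructive to set your route beside the paper's proof of the periodic analogue, Theorem~\ref{Thm: Da Prato-Grisvard periodic}: both rest on the same key idea of splitting $\cA^2\e^{-(\sigma+\tau)\cA}$ so that one factor of $\cA$ is spent on operator-norm decay in the combined time variable while the other is retained on $f$ through the seminorm \eqref{Eq: Characterization real interpolation space}. You then finish with the dilation-invariant Minkowski/Schur estimate, whose constant $\int_0^\infty r^{\theta-1}(1+r)^{-1}\,\d r$ is finite for all $\theta\in(0,1)$, whereas the paper splits the decay further into $(\,\cdot\,)^{-\gamma_1}(\,\cdot\,)^{-\gamma_2}$ with $1/p^{\prime}<\gamma_2<1-\theta+1/p^{\prime}$, applies H\"older in $s$, and unwinds the resulting triple integral by repeated use of Fubini. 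Your version is cleaner and avoids the auxiliary exponents; the paper's version is tailored to the extra tail term $\sum_{k}\e^{-(t+kT)\cA}\fu$ produced by periodicity, which is absent in the Cauchy-problem setting treated here.
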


We remark at this point that the theorem above implies that the mild solution $u$ to $\mathrm{(ACP)}$ is in fact a strong solution satisfying $u^{\prime} (t) + \cA u (t) = f(t)$ for almost 
every $0 < t < T$.

The proof of our main results are based on the following periodic version of the Da Prato--Grisvard theorem, which is also of independent interest. To this end, we define 
the periodicity of measurable functions as follows. 
For some $0 < T < \infty$, we say a measurable function $f : \IR \to X$ is called 
\textit{periodic of period $T$} if $f (t) = f(t + T)$ holds true for almost all  $t \in (- \infty , \infty)$.

For $\theta \in (0 , 1)$, $1 \leq p < \infty$, and $0 < T < \infty$ assume that $f : \IR \to D_{\cA} (\theta , p)$ is periodic of period $T$. Then the periodic version of~\eqref{Eq: ACP} reads as
\begin{align} \tag{PACP} \label{Eq: PACP}
 \left\{ \begin{aligned}
  u^{\prime} (t) + \cA u(t) &= f(t), &&t \in \IR, \\
  u(t) &= u(t + T), \qquad &&t \in \IR.
 \end{aligned} \right.
\end{align}
Formally, a candidate for a solution $u$ of~\eqref{Eq: PACP} is given by
\begin{align}
\label{Eq: Definition of periodic u}
 u(t) := \int_{- \infty}^t \e^{- (t - s) \cA} f(s) \; \d s.
\end{align}
The following lemma shows that, under certain assumptions on $\cA$ and $f$, $u$ is indeed  well-defined, continuous and periodic.

\begin{lemma}\label{Lem: Properties of the mild solution}
Let  $f : \IR \to D_{\cA} (\theta,p)$ be a $T$-periodic function satisfying $f_{|(0 , T)} \in \L^p(0 , T ; D_{\cA} (\theta , p))$ and assume that $0 \in \rho(\cA)$. 
Then, the function $u$ defined by~\eqref{Eq: Definition of periodic u} is well-defined, satisfies $u \in \C( \IR ; D_{\cA} (\theta , p))$, and is $T$-periodic.
\end{lemma}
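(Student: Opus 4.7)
The plan is to substitute $\tau = t - s$ in the defining integral, which gives the equivalent formula
\begin{align*}
u(t) = \int_0^\infty \e^{-\tau \cA} f(t - \tau) \; \d \tau,
\end{align*}
and then to exploit exponential decay of the semigroup combined with $T$-periodicity of $f$.

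First I would record the key semigroup estimate. Since $-\cA$ generates a bounded analytic semigroup on $X$ and $0 \in \rho(\cA)$, the semigroup decays exponentially on $X$. Because $\e^{-\tau \cA}$ commutes with $\cA$, the space $D_{\cA}(\theta , p)$ is invariant under $\e^{-\tau \cA}$, and using the characterization~\eqref{Eq: Characterization real interpolation space} together with the equivalence of the full and the homogeneous norm (valid since $0 \in \rho(\cA)$), one obtains constants $M , \omega > 0$ such that
\begin{align*}
\| \e^{-\tau \cA} \|_{\Lop(D_{\cA}(\theta , p))} \leq M \e^{-\omega \tau}, \qquad \tau \geq 0.
\end{align*}

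Next I would establish well-definedness by decomposing the positive half-line into period-length intervals. Writing $u(t) = \sum_{k = 0}^\infty u_k(t)$ with $u_k(t) = \int_{kT}^{(k+1)T} \e^{-\tau \cA} f(t - \tau) \d \tau$, the semigroup bound yields
\begin{align*}
\| u_k(t) \|_{D_{\cA}(\theta , p)} \leq M \e^{-\omega k T} \int_{k T}^{(k+1) T} \| f(t - \tau) \|_{D_{\cA}(\theta , p)} \; \d \tau.
\end{align*}
Hölder's inequality (or direct estimate for $p = 1$) bounds the integral on the right by $T^{1/p'} \| f \|_{\L^p(0,T;D_{\cA}(\theta , p))}$, where I use the $T$-periodicity of $f$ to translate the integral back to $(0 , T)$. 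Summing the resulting geometric series in $e^{-\omega k T}$ gives a finite pointwise estimate on $\| u(t) \|_{D_{\cA}(\theta , p)}$, uniformly in $t \in \IR$.

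Continuity follows from a dominated convergence argument: for fixed $t \in \IR$ and $h \to 0$, the integrand $\e^{-\tau \cA}[f(t + h - \tau) - f(t - \tau)]$ tends to zero in $D_{\cA}(\theta , p)$ for almost every $\tau$ by continuity of translations in $\L^p_{\loc}(\IR ; D_{\cA}(\theta , p))$ combined with strong continuity of the semigroup, and the previous estimate supplies an integrable majorant that is itself finite for $\L^p$ local data. Finally, $T$-periodicity is direct: the change of variable $s \mapsto s - T$ in the defining integral yields
\begin{align*}
u(t + T) = \int_{-\infty}^t \e^{-(t - s) \cA} f(s + T) \; \d s = u(t),
\end{align*}
where the last identity uses the $T$-periodicity of $f$. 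The main obstacle is the bookkeeping of the period decomposition; once the uniform bound on $\|\e^{-\tau \cA}\|_{\Lop(D_{\cA}(\theta , p))}$ is in hand, the remaining steps are standard.
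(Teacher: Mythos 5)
Your argument is essentially the paper's: the same period-by-period decomposition of the integral, exponential decay of the semigroup on $D_{\cA}(\theta,p)$ (justified, as you note, by commutation with $\cA$ in the characterization~\eqref{Eq: Characterization real interpolation space} and the equivalence of norms when $0\in\rho(\cA)$), H\"older's inequality, and a geometric series; the periodicity step is identical. The only divergence is in the continuity step: the paper writes $u(t+h)-u(t)$ as a short integral over $(t,t+h)$ plus $\int_{-\infty}^t \e^{-(t-s)\cA}[\e^{-h\cA}-\Id]f(s)\,\d s$ and applies Lebesgue's theorem to $[\e^{-h\cA}-\Id]f$, whereas you shift the increment onto $f$ itself and invoke continuity of translations. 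Both routes work, but your justification is imprecise as stated: $\L^p$-continuity of translation gives $\|f(\cdot+h)-f\|_{\L^p(0,T;D_{\cA}(\theta,p))}\to 0$, \emph{not} almost-everywhere pointwise convergence of $f(t+h-\tau)-f(t-\tau)$ along every sequence $h\to 0$, so the dominated convergence argument as phrased does not quite apply. The repair is immediate and uses nothing new: apply the uniform bound you already proved to the $T$-periodic function $f(\cdot+h)-f$, which yields $\|u(t+h)-u(t)\|_{D_{\cA}(\theta,p)}\le C\|f(\cdot+h)-f\|_{\L^p(0,T;D_{\cA}(\theta,p))}\to 0$ directly, with no pointwise convergence needed.
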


\begin{proof}
Let $k_0 \in \IZ$ be such that $- k_0 T < t \leq - (k_0 - 1) T$. Using H\"older's inequality, the periodicity of $f$, and the exponential decay of $\e^{-t \cA}$, we obtain
\begin{align*}
 \int_{-\infty}^t &\|\e^{- (t - s) \cA} f(s)\|_{D_{\cA}(\theta , p)} \; \d s \\
 &= \int_{- k_0 T}^t \|\e^{- (t - s) \cA} f(s)\|_{D_{\cA} (\theta , p)} \; \d s + \sum_{k = k_0}^{\infty} 
\int_{- (k + 1) T}^{- k T} \| \e^{- (t - s) \cA} f(s) \|_{D_{\cA} (\theta , p)} \; \d s \\
&\leq C \bigg( \int_0^{t + k_0 T} \| f(s) \|_{D_{\cA} (\theta , p)}^p \; \d s \bigg)^{\frac{1}{p}} + C \sum_{k = k_0}^{\infty} \e^{- \omega k T} \int_0^T \| \e^{- (T - s) \cA} f(s) \|_{D_{\cA} (\theta , p)} \; \d s\\
&\leq C \Big( 1 + \sum_{k = k_0}^{\infty} \e^{- \omega k T} \Big) \bigg( \int_0^T \| f(s) \|_{D_{\cA} (\theta , p)}^p \; \d s \bigg)^{\frac{1}{p}}
\end{align*}
for some $\omega>0$. It follows that $u$ is well-defined. For the continuity of $u$ we write for $h > 0$
\begin{align*}
 u(t + h) - u(t) = \int_t^{t + h} \e^{- (t + h - s) \cA} f(s) \; \d s + \int_{- \infty}^t \e^{- (t - s) \cA} [ \e^{- h \cA} - \Id ] f(s) \; \d s.
\end{align*}
By the boundedness of the semigroup it suffices to consider the second integral. This resembles the expression from the first part of the proof but with $f$ being replaced by 
$[ \e^{- h \cA} - \Id ] f$. Thus,
\begin{align*}
 \Big\| \int_{- \infty}^t \e^{- (t - s) \cA} [ \e^{- h \cA} - \Id ] f(s) \; \d s \Big\|_{D_{\cA} (\theta , p)} \leq C \bigg( \int_0^T \| [ \e^{- h \cA} - \Id ] f(s) \|_{D_{\cA} (\theta , p)}^p \; \d s 
\bigg)^{\frac{1}{p}}
\end{align*}
and the right-hand side tends to zero as $h \to 0$ by Lebesgue's theorem. 
The periodicity of $u$ directly follows by using the transformation $s^{\prime} = s + T$ and the periodicity of $f$.
\end{proof}

We now state  the periodic version of the Da Prato--Grisvard theorem.

\begin{theorem}\label{Thm: Da Prato-Grisvard periodic}
Let $X$ be a Banach space and $- \cA$ be the generator of a bounded analytic semigroup on $X$ with $0 \in \rho(\cA)$. Let  $\theta \in (0 , 1)$, $1 \leq p < \infty$, and $0 < T < \infty$. \par
Then there exists a constant $C > 0$ such that for all periodic functions $f : \IR \to D_{\cA} (\theta , p)$ with $f_{|(0 , T)} \in \L^p(0 , T ; D_{\cA} (\theta , p))$ the function $u$ 
defined by~\eqref{Eq: Definition of periodic u} lies in $\C(\IR ; D_{\cA} (\theta , p))$, is periodic of period $T$, satisfies $u (t) \in D(\cA)$ for almost every $t \in \IR$, and satisfies
\begin{align*}
 \| \cA u \|_{\L^p(0 , T ; D_{\cA} (\theta , p))} \leq C \| f \|_{\L^p(0 , T ; D_{\cA} (\theta , p))}.
\end{align*}
\end{theorem}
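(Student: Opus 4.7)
The continuity and $T$-periodicity of $u$ are already contained in Lemma~\ref{Lem: Properties of the mild solution}, so the only thing left to prove is the maximal regularity estimate. My plan is to split, for $t\in(0,T)$,
\begin{align*}
u(t) = \int_{-\infty}^{0}\e^{-(t-s)\cA}f(s)\,\d s + \int_{0}^{t}\e^{-(t-s)\cA}f(s)\,\d s =: v(t)+w(t),
\end{align*}
and to handle the two pieces separately. The summand $w$ is exactly the mild solution of~\eqref{Eq: ACP} on $(0,T)$ with vanishing initial datum and right-hand side $f|_{(0,T)}$, so the classical result, Proposition~\ref{Thm: Da Prato-Grisvard}, directly yields $\|\cA w\|_{\L^{p}(0,T;D_{\cA}(\theta,p))}\leq C\|f\|_{\L^{p}(0,T;D_{\cA}(\theta,p))}$.

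For the history integral $v$, I would exploit the $T$-periodicity of $f$. Decomposing $(-\infty,0)=\bigcup_{k\geq 1}(-kT,-(k-1)T)$, substituting $s=r-kT$ on each subinterval, and using the semigroup identity $\e^{-(t-r+kT)\cA}=\e^{-(k-1)T\cA}\e^{-(t-r+T)\cA}$ (valid since both exponents are non-negative for $t,r\in(0,T)$ and $k\geq 1$), one obtains
\begin{align*}
v(t) = \Big(\sum_{k=0}^{\infty}\e^{-kT\cA}\Big)\psi(t) = (I-\e^{-T\cA})^{-1}\psi(t), \qquad \psi(t):=\int_{0}^{T}\e^{-(t-r+T)\cA}f(r)\,\d r.
\end{align*}
Under the standing hypotheses, $\cA$ is sectorial of angle strictly less than $\pi/2$ with $0\in\rho(\cA)$, so $\sigma(\cA)$ sits in a sector inside the open right half-plane and is bounded away from the origin; the spectral mapping theorem then forces the spectral radius of $\e^{-T\cA}$ to be strictly less than $1$, whence the Neumann series converges and $(I-\e^{-T\cA})^{-1}\in\Lop(X)$. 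Because this inverse commutes with $\cA$ and therefore is bounded on $D(\cA)$, it extends by real interpolation to a bounded operator on $D_{\cA}(\theta,p)=(X,D(\cA))_{\theta,p}$.

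To finish, I recognize $\psi$ as a time-shifted Da Prato--Grisvard solution: extending $f$ by zero on $(T,2T)$ and letting $\chi$ be the mild solution of~\eqref{Eq: ACP} on $(0,2T)$ with right-hand side this extension, a direct computation gives $\psi(t)=\chi(t+T)$ for $t\in(0,T)$. Applying Proposition~\ref{Thm: Da Prato-Grisvard} on $(0,2T)$ yields $\|\cA\chi\|_{\L^{p}(0,2T;D_{\cA}(\theta,p))}\leq C\|f\|_{\L^{p}(0,T;D_{\cA}(\theta,p))}$, and after restricting to $(T,2T)$ and shifting back the same bound holds for $\cA\psi$ on $(0,T)$. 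Combined with the boundedness of $(I-\e^{-T\cA})^{-1}$ on $D_{\cA}(\theta,p)$ and with the estimate for $\cA w$, this produces the claimed inequality. The main obstacle is the bookkeeping step that identifies the history contribution as a geometric series in $\e^{-T\cA}$ applied to a shifted Cauchy solution, together with ensuring that the Neumann-series inverse truly persists on the real interpolation scale $D_{\cA}(\theta,p)$ rather than only on $X$; the hypothesis $0\in\rho(\cA)$ is precisely what is used here.
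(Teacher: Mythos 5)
Your proof is correct, and it takes a genuinely different route from the paper's. Both arguments begin with the same splitting of $u(t)$ for $t\in(0,T)$ into the Da Prato--Grisvard piece $\int_0^t \e^{-(t-s)\cA}f(s)\,\d s$ plus a tail that, after periodicity, reduces to estimating $t\mapsto \cA\e^{-t\cA}\fu$ with $\fu=\int_0^T \e^{-(T-s)\cA}f(s)\,\d s$ (your $\psi(t)$ equals $\e^{-t\cA}\fu$, and your $(I-\e^{-T\cA})^{-1}$ is exactly the geometric series $\sum_k \e^{-kT\cA}$ that the paper absorbs via exponential decay). Where you part ways is in how the tail is bounded: the paper carries out a four-step, hands-on $\L^p$-in-time estimate of $\cA\e^{-t\cA}\fu$ in the $D_{\cA}(\theta,p)$-norm, essentially re-deriving a Da Prato--Grisvard type bound from scratch for this particular function; you instead observe that $\psi(t)=\chi(t+T)$ for the Da Prato--Grisvard solution $\chi$ on $(0,2T)$ with right-hand side $f$ extended by zero, so that the classical Proposition~\ref{Thm: Da Prato-Grisvard} on the doubled interval gives the bound as a black box. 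Your route is shorter and cleaner, at the cost of one extra observation (the time-shift identification) and the separate verification that $(I-\e^{-T\cA})^{-1}$ persists on the interpolation scale, which you correctly justify by commutation with $\cA$ (or, more directly, by the uniform exponential decay of $\e^{-kT\cA}$ on $D_{\cA}(\theta,p)$ obtained by interpolating its decay on $X$ and on $D(\cA)$). The paper's more computational approach avoids invoking the classical theorem on a doubled window and produces the seminorm and $X$-norm pieces separately, but buys nothing extra here.
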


\begin{proof}
The continuity and periodicity of $u$ are proven in Lemma~\ref{Lem: Properties of the mild solution}. Let $t \in [0 , T)$ and use the transformation $s^{\prime} = s + (k + 1) T$ for $k \in \IN_0$ as well as that $f$ is periodic to write
\begin{align}
\label{Eq: u as a sum}
\begin{aligned}
 u(t) = \int_0^t \e^{- (t - s) \cA} f(s) \; \d s + \sum_{k = 0}^{\infty} \e^{- (t + k T) \cA} \int_0^T \e^{- (T - s) \cA} f(s) \; \d s.
\end{aligned}
\end{align}
In the following, use the notation
\begin{align*}
 \fu := \int_0^T \e^{- (T - s) \cA} f(s) \; \d s.
\end{align*}
Since Proposition~\ref{Thm: Da Prato-Grisvard} implies
\begin{align*}
 \int_0^t \e^{- (t - s) \cA} f(s) \; \d s \in D(\cA) \qquad (\text{a.e. } t \in (0 , T))
\end{align*}
and
\begin{align*}
 \Big\| t \mapsto \cA \int_0^t \e^{- (t - s) \cA} f(s) \; \d s \Big\|_{\L^p(0 , T ; D_{\cA} (\theta , p))} \leq C \| f \|_{\L^p(0 , T ; D_{\cA} (\theta , p))},
\end{align*}
by the exponential decay of the semigroup, it suffices to prove the estimate
\begin{align*}
 \| t \mapsto \cA \e^{- t \cA} \fu \|_{\L^p (0 , T ; D_{\cA} (\theta , p))} \leq C \| f \|_{\L^p(0 , T ; D_{\cA} (\theta , p))}.
\end{align*}
\begin{flushleft}
\textbf{Step 1.}
\end{flushleft}
Let $\gamma_1 , \gamma_2 \in (0 , 1)$ with $\gamma_1 + \gamma_2 = 1$ and $1 / p^{\prime} < \gamma_2 < 1 - \theta + 1 / p^{\prime}$, where $p^{\prime}$ denotes the H\"older conjugate exponent to $p$. Then, the boundedness and the analyticity of the semigroup, followed by a linear transformation and H\"older's inequality imply
\begin{align*}
 \| \cA \e^{- \tau \cA} \cA \e^{- t \cA} \fu \|_X &\leq C \int_0^T \frac{1}{(T + \tau + t - s)^{\gamma_1}} \frac{1}{(T + \tau + t - s)^{\gamma_2}} \| \cA \e^{- (T + \tau + t - s) / 2 \cA} f(s) \|_X \; \d s \\
 &= C \int_t^{T + t} \frac{1}{(\tau + s)^{\gamma_1}} \frac{1}{(\tau + s)^{\gamma_2}} \| \cA \e^{- (\tau + s) / 2 \cA} f(T + t - s) \|_X \; \d s \\
 &\leq C (\tau + t)^{1 / p^{\prime} - \gamma_2} \bigg( \int_t^{T + t} \frac{1}{(\tau + s)^{\gamma_1 p}} \| \cA \e^{- (\tau + s) / 2 \cA} f(T + t - s) \|_X^p \; \d s \bigg)^{\frac{1}{p}}.
\end{align*}
Notice that $1 / p^{\prime} < \gamma_2$ was eminent in the calculation above. Next, $t > 0$ implies
\begin{align}
\label{Eq: Estimate of Step 1 periodic case}
 \| \cA \e^{- \tau \cA} \cA \e^{- t \cA} \fu \|_X \leq C \tau^{1 / p^{\prime} - \gamma_2} \bigg( \int_t^{T + t} \frac{1}{(\tau + s)^{\gamma_1 p}} \| \cA \e^{- (\tau + s) / 2 \cA} f(T + t - s) \|_X^p \; \d s \bigg)^{\frac{1}{p}}.
\end{align}
\begin{flushleft}
\textbf{Step 2.}
\end{flushleft}
An application of~\eqref{Eq: Estimate of Step 1 periodic case} and Fubini's theorem yields
\begin{align*}
 \int_0^T \| \cA \e^{- \tau \cA} &\cA \e^{- t \cA} \fu \|_X^p \; \d t \\
 &\leq C \tau^{p ( 1 / p^{\prime} - \gamma_2)} \int_0^{2T} \int_{\max\{ 0 , s - T \}}^{\min\{T , s\}} \frac{1}{(\tau + s)^{\gamma_1 p}} \| \cA \e^{- (\tau + s) / 2 \cA} f(T + t - s) \|_X^p \; \d t \; \d s.
\end{align*}
Notice that the inner integral can be estimated by using $\min\{ T , s \} \leq s$. The transformation $t^{\prime} = T + t - s$ delivers then the estimate
\begin{align}
\label{Eq: Estimate of Step 2 periodic case}
\begin{aligned}
 \| t \mapsto \cA \e^{- \tau \cA} \cA \e^{- t \cA}& \fu \|_{\L^p(0 , T ; X)}^p \\
 &\leq C \tau^{p ( 1 / p^{\prime} - \gamma_2)} \int_0^{2T} \int_{\max\{ 0 , T - s \}}^T \frac{1}{(\tau + s)^{\gamma_1 p}} \| \cA \e^{- (\tau + s) / 2 \cA} f(t) \|_X^p \; \d t \; \d s.
\end{aligned}
\end{align}
\begin{flushleft}
\textbf{Step 3.}
\end{flushleft}
Use Fubini's theorem first and then~\eqref{Eq: Estimate of Step 2 periodic case} to estimate the full norm by
\begin{align*}
 \int_0^T [\cA \e^{- t \cA} &\fu]_{\theta , p}^p \; \d t 
 \leq C \int_0^{\infty} \tau^{\gamma - 1} \int_0^{2T} \int_{\max\{ 0 , T - s \}}^T \frac{1}{(\tau + s)^{\gamma_1 p}} \| \cA \e^{- (\tau + s) / 2 \cA} f(t) \|_X^p \; \d t \; \d s \; \d \tau,
\end{align*}
where $\gamma = p (1 + 1 / p^{\prime} - \theta - \gamma_2)$. Apply Fubini's theorem followed by the substitution $s^{\prime} = \tau + s$ to get
\begin{align*}
 \int_0^T [\cA \e^{- t \cA} \fu]_{\theta , p}^p \; \d t \leq C \int_0^T \int_0^{\infty} \tau^{\gamma - 1} \int_{T + \tau - t}^{2T + \tau} \frac{1}{s^{\gamma_1 p}} \| \cA \e^{- s / 2 \cA} f(t) \|_X^p \; \d s \; \d \tau \; \d t.
\end{align*}
Finally, use Fubini's theorem in order to calculate the $\tau$-integral (here $\gamma_2 < 1 - \theta + 1 / p^{\prime}$ is essential) and note that $t - T$ is negative and $\gamma$ positive to get
\begin{align*}
 \int_0^T [\cA \e^{- t \cA} \fu]_{\theta , p}^p \; \d t &\leq \frac{C}{\gamma} \int_0^T \int_{T - t}^{\infty} \frac{1}{s^{\gamma_1 p}} \| \cA \e^{- s / 2 \cA} f(t) \|_X^p (s + t - T)^{\gamma} \; \d s \; \d t \\
 &\leq \frac{C}{\gamma} \int_0^T \int_{T - t}^{\infty} s^{\gamma - \gamma_1 p} \| \cA \e^{- s / 2 \cA} f(t) \|_X^p \; \d s \; \d t.
\end{align*}
The proof is concluded by definition $\gamma$ and of the real interpolation space norm, since this gives
\begin{align*}
 \int_0^T [\cA \e^{- t \cA} \fu]_{\theta , p}^p \; \d t \leq \frac{2^{p(1 - \theta)} C}{2 \gamma} \| f \|_{\L^p (0 , T ; D_{\cA} (\theta , p))}^p.
\end{align*}
\begin{flushleft}
\textbf{Step 4.}
\end{flushleft}
In this step, we estimate $\int_0^T \|\cA \e^{-t\cA}\fu\|_X \; \d t$. It is known, see~\cite[Corollary 6.6.3]{haase2006}, that $D_{\cA} (\vartheta , 1) \hookrightarrow D(\cA^{\vartheta})$ and that $D_{\cA}(\theta , p) \hookrightarrow D_{\cA} (\vartheta , 1)$ for every $0 < \vartheta < \theta$. Thus,
\begin{align*}
D_{\cA}(\theta , p) \hookrightarrow D(\cA^{\vartheta}).
\end{align*}
Now, let $\vartheta_1 , \vartheta_2 , \vartheta_3 \in (0,1)$ with $\vartheta_1 + \vartheta_2 + \vartheta_3 = 1$, $\vartheta_1 < \theta$, $\vartheta_2 p^{\prime} < 1$ and $\vartheta_3 p < 1$, where $p^{\prime}$ denotes the H\"older conjugate exponent to $p$. Then, the bounded analyticity of $e^{-t\cA}$, H\"older's inequality and the above embedding imply
\begin{align*}
\| \cA \e^{-t\cA}\fu \|_X &= \| \cA^{\vartheta_3}\e^{-t\cA}\int_0^T \cA^{\vartheta_2} \e^{-(T-s)\cA}\cA^{\vartheta_1} f(s) \; \d s\|_X
\leq C t^{-\vartheta_3}\int_0^T (T-s)^{-\vartheta_2}\|A^{\vartheta_1}f(s)\|_X \; \d s \\
& \leq C t^{-\vartheta_3}\left(\int_0^T (T-s)^{-\vartheta_2 p^{\prime}} \; \d s\right)^{\frac{1}{p^{\prime}}}\left(\int_0^T \|A^{\vartheta_1}f(s)\|_X^p \; \d s\right)^{\frac{1}{p}}\\
&\leq C t^{-\vartheta_3} \| f\|_{\L^p(0,T:D_{\cA}(\theta , p))}.
\end{align*}
Consequently,
\begin{align*}
\int_0^T \|\cA \e^{-t\cA}\fu\|_X \; \d t &\leq c \| f\|_{\L^p(0,T:D_{\cA}(\theta , p))}. \qedhere
\end{align*}
\end{proof}

We conclude this section by showing that, under the assumptions of Theorem~\ref{Thm: Da Prato-Grisvard periodic}, $u$ defined by~\eqref{Eq: Definition of periodic u} indeed is the unique strong solution to~$\mathrm{(PACP)}$.

\begin{proposition}
\label{Prop: Uniqueness of linear problem}
Under the hypotheses of Theorem~\ref{Thm: Da Prato-Grisvard periodic} the function $u$ defined by~\eqref{Eq: Definition of periodic u} is the unique strong solution to~$\mathrm{(PACP)}$, i.e., $u$ is the unique periodic function of period $T$ in $\C(\IR ; X)$ that is for almost every $t \in \IR$ differentiable in $t$, satisfies $u(t) \in D(\cA)$, and $\cA u \in \L^p(0,T;X)$, and $u$ solves
\begin{align*}
 u^{\prime} (t) + \cA u (t) &= f(t).
\end{align*}
\end{proposition}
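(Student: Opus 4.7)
The plan is to establish existence and uniqueness in separate steps, exploiting the representation already derived in the proof of Theorem~\ref{Thm: Da Prato-Grisvard periodic}.

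For existence, I would first observe that $0 \in \rho(\cA)$ together with the sectoriality of $\cA$ yields an exponential decay $\|\e^{-t \cA}\|_{X \to X} \leq C \e^{-\omega_0 t}$ for some $\omega_0 > 0$ (this decay is already implicitly exploited in Lemma~\ref{Lem: Properties of the mild solution}). Consequently, the geometric series $\xi := \sum_{k = 0}^{\infty} \e^{-k T \cA} \fu$ converges in $X$, and the identity~\eqref{Eq: u as a sum} rewrites on $(0 , T)$ as
\begin{align*}
 u(t) = \int_0^t \e^{- (t - s) \cA} f(s) \; \d s + \e^{- t \cA} \xi.
\end{align*}
Proposition~\ref{Thm: Da Prato-Grisvard}, together with the remark following it, identifies the convolution as a strong solution to $\mathrm{(ACP)}$ with zero initial value, while $t \mapsto \e^{- t \cA} \xi$ is classically $\C^{\infty}$ on $(0 , \infty)$ with derivative $- \cA \e^{- t \cA} \xi$ by analyticity of the semigroup. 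Adding the two contributions and extending by $T$-periodicity delivers $u^{\prime}(t) + \cA u(t) = f(t)$ for almost every $t \in \IR$. The required properties $u \in \C(\IR ; X)$, $u(t) \in D(\cA)$ a.e., and $\cA u \in \L^p(0 , T ; X)$ are then read off from Theorem~\ref{Thm: Da Prato-Grisvard periodic} combined with the embedding $D_{\cA}(\theta , p) \hookrightarrow X$.

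For uniqueness, let $u_1$ and $u_2$ be two strong solutions and set $w := u_1 - u_2$. Then $w \in \C(\IR ; X)$ is $T$-periodic, $w(t) \in D(\cA)$ a.e., $\cA w \in \L^p(0 , T ; X)$, and $w^{\prime} + \cA w = 0$ almost everywhere. Periodicity upgrades $\cA w$ to $\L^p_{\loc}(\IR ; X)$, hence $w^{\prime} \in \L^p_{\loc}(\IR ; X)$ and $w$ is locally absolutely continuous. Fix $t_0 \in \IR$ and consider $\phi(s) := \e^{- (t_0 - s) \cA} w(s)$ on $(- \infty , t_0]$. Since $s \mapsto \e^{- (t_0 - s) \cA}$ is smooth in operator norm on $(- \infty , t_0)$ with derivative $\cA \e^{- (t_0 - s) \cA}$, the Bochner product rule combined with the commutation $\cA \e^{- (t_0 - s) \cA} = \e^{- (t_0 - s) \cA} \cA$ on $D(\cA)$ gives $\phi^{\prime}(s) = 0$ for almost every $s < t_0$. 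Hence $\phi$ is constant; strong continuity of the semigroup at $0$ identifies this constant as $w(t_0)$. Evaluating at $s = t_0 - n T$ and invoking $w(t_0 - n T) = w(t_0)$ yields $w(t_0) = \e^{- n T \cA} w(t_0)$ for every $n \in \IN$, and the exponential decay of the semigroup forces $w(t_0) = 0$.

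The main technical obstacle I expect is the a.e.\ product rule used to compute $\phi^{\prime}$, because $w$ is only a.e.\ differentiable and $w(s) \in D(\cA)$ only for almost every $s$. However, the local absolute continuity of $w$ together with the operator-norm smoothness of $s \mapsto \e^{- (t_0 - s) \cA}$ on $(- \infty , t_0)$ reduces this to a direct Bochner calculus identity of the form $\phi(s_2) - \phi(s_1) = \int_{s_1}^{s_2} \phi^{\prime}(r) \; \d r$, with the commutation of $\cA$ and its semigroup closing the algebra. Everything else, including the invertibility of $\Id - \e^{- T \cA}$ implicit in the definition of $\xi$, is a direct consequence of the exponential decay ensured by $0 \in \rho(\cA)$ and of the tools already assembled in this section.
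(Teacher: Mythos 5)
Your proof is correct and takes a genuinely different route for the \emph{existence} part, while your uniqueness argument fills in details that the paper leaves implicit.

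For existence, the paper first assumes $f_{|(0,T)}\in\L^p(0,T;D(\cA))$, differentiates the convolution formula directly to obtain a classical solution, and then passes to general $f\in\L^p(0,T;D_{\cA}(\theta,p))$ by density together with the a~priori estimate of Theorem~\ref{Thm: Da Prato-Grisvard periodic}. You instead exploit the splitting already present in~\eqref{Eq: u as a sum}: writing $u(t)=\int_0^t\e^{-(t-s)\cA}f(s)\,\d s+\e^{-t\cA}\xi$ with $\xi=\sum_{k\geq0}\e^{-kT\cA}\fu$, you cite the remark after Proposition~\ref{Thm: Da Prato-Grisvard} for the convolution term and use analyticity of the semigroup for the second term. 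Both are valid; your decomposition has the advantage of avoiding the approximation argument, but it outsources the nontrivial work to the remark after Proposition~\ref{Thm: Da Prato-Grisvard} (that mild $=$ strong once $\cA u\in\L^p$), which the paper itself proves only implicitly via the density argument. For uniqueness, you and the paper invoke the same mechanism (exponential decay plus periodicity forces the difference to vanish), but the paper asserts the representation $w(t)=\e^{-t\cA}w(0)$ without argument, whereas you derive it by the auxiliary function $\phi(s)=\e^{-(t_0-s)\cA}w(s)$ and the Bochner product rule, which is the correct justification. One small caveat: your inference ``$w'\in\L^p_{\loc}$ hence $w$ locally absolutely continuous'' is not literally valid if $w$ is only assumed a.e.\ differentiable (Cantor-type functions satisfy the hypothesis without being AC); the statement is meant in the sense that $u$ belongs to $\W^{1,p}_{\loc}$, as is implicit in the definition of the solution space $\IE^{\per}_{\cA}$, and under that reading your argument is complete. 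This is the same implicit assumption the paper makes when asserting $w(t)=\e^{-t\cA}w(0)$, so you have not introduced a new gap---you have merely made the same assumption visible.
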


\begin{proof}
First of all, $u$ is periodic by Lemma~\ref{Lem: Properties of the mild solution} and since $D_{\cA} (\theta , p)$ continuously embeds into $X$ the very same lemma implies $u \in \C(\IR ; X)$. \par
Assume first that $f_{|(0 , T)} \in \L^p(0 , T ; D(\cA))$. Then, by a direct calculation, $u$ defined by~\eqref{Eq: Definition of periodic u} is differentiable, satisfies $u(t) \in D(\cA)$, and solves 
\begin{align*}
 u^{\prime} (t) + \cA u (t) &= f(t)
\end{align*}
 for every $t \in \IR$. The density of $\L^p(0 , T ; D(\cA))$ in $\L^p(0 , T ; D_{\cA} (\theta , p))$ and the estimate proven in Theorem~\ref{Thm: Da Prato-Grisvard periodic} imply that all these properties carry over to all right-hand sides in $\L^p(0 , T ; D_{\cA} (\theta , p))$ (but only for almost every $t \in \IR$) by an approximation argument. \par
 For the uniqueness, assume that $v \in \C(\IR ; X)$ with $v^{\prime}, \cA v \in \L^p(0,T;X)$ is another periodic function of period $T$ which satisfies the equation for almost every $t \in \IR$. 
Let $w := u - v$. Then $w$ satisfies
 \begin{align*}
  w^{\prime} (t) = - \cA w (t) \qquad (\mathrm{a.e.}~ t \in \IR).
 \end{align*}
 In this case, for $t > 0$, $w$ can be written by means of the semigroup as $w (t) = \e^{- t \cA} (u(0) - v(0))$. Now, the exponential decay of the semigroup and the periodicity of $w$ imply that $w$ must be zero for all $t \in \IR$.
\end{proof}

\begin{remark}
\label{Rem: Maximal regularity estimate}
Combining Theorem~\ref{Thm: Da Prato-Grisvard periodic} and Proposition~\ref{Prop: Uniqueness of linear problem} shows that for each periodic $f$ with period $T$ and $f_{|(0 , T)} \in \L^p(0 , T ; D_{\cA} (\theta , p))$ also $u^{\prime}_{|(0 , T)} \in \L^p(0 , T ; D_{\cA} (\theta , p))$. The same is true for $u$ since $0 \in \rho (\cA)$. Summarizing, there exists a constant $C > 0$ such that
\begin{align}
\label{Eq: Maximal regularity estimate}
 \|u\|_{\IE_{\cA}^{\per}} \leq C \| f \|_{\L^p(0 , T ; D_{\cA} (\theta , p))},
\end{align}
where $\IE_{\cA}^{\per}$ is defined as in the end of Section~\ref{Sec: Preliminaries}.
\end{remark}

\section{Time periodic solutions for semilinear equations}
\label{Sec: The periodic solution}

In this section, we use the periodic version of the Da Prato--Grisvard theorem to construct time periodic solutions to semilinear parabolic equations by employing Banach's fixed point theorem. The framework that is presented here includes all the models from Section~\ref{Sec: Main results}. \par

\subsection{An abstract existence theorem for general types of nonlinearities}
\label{Subsec: An abstract existence theorem for general types of nonlinearities}

Let $- \cA$ be the generator of a bounded analytic semigroup $\e^{-t \cA}$ on a Banach space $X$ with the domain $D(\cA)$ and $0 \in \rho (\cA)$. For $T > 0$, $\theta \in (0 , 1)$, and $1 \leq p < \infty$ let $f : \IR \to D_{\cA}(\theta , p)$ be periodic of period $T$ with $f_{|(0 , T)} \in \L^p(0 , T ; D_{\cA} (\theta , p))$. We are aiming for the strong solvability of
\begin{align*} \tag{NACP} \label{Eq: NACP}
 \left\{
  \begin{aligned}
   u^{\prime} (t) + \cA u (t) &= F[u](t) + f(t) \qquad &&(t \in \IR) \\
   u(t) &= u(t + T)  &&(t \in \IR)
  \end{aligned} \right.
\end{align*}
under some smallness assumptions on $f$. The solution $u$ will be constructed in the space of maximal regularity $\IE_{\cA}^{\per}$ defined in the end of Section~\ref{Sec: Preliminaries}. Recall the corresponding data space
\begin{align*}
 \IF_{\cA} = \L^p(0 , T ; D_{\cA} (\theta , p))
\end{align*}
and let $\IB_\rho := \IB^{\IE_{\cA}^{\per}} (0 , \rho)$ for some $\rho > 0$. For the nonlinear term $F$, we make the following standard assumption.

\begin{assumption}{N}
\label{Ass: Nonlinearity}
There exists $R > 0$ such that the nonlinear term $F$ is a mapping from $\IB_R$ into $\IF_{\cA}$ and satisfies
 \begin{align*}
  F \in \C^1(\IB_R ; \IF_{\cA}), \quad F(0) = 0, \quad \text{and} \quad DF(0) = 0, 
 \end{align*}
where $DF : \IB_R \to \cL(\IE_{\cA}^{\per}, \IF_{\cA})$ denotes the Fr$\rm{\acute{e}}$chet derivative.
\end{assumption}

The following theorem proves existence and uniqueness of solutions to $\mathrm{(NACP)}$ in the class $\IE_{\cA}^{\per}$ for small forcings $f$.

\begin{theorem}
\label{Thm: Abstract existence theorem}
Let $T > 0$, $0 < \theta < 1$, $1 \leq p < \infty$, and $F$ and $R > 0$ subject to Assumption~\ref{Ass: Nonlinearity}. 
Then there is a constant $r \leq R$ and $c=c(T , \theta , p , r)>0$ such that if $f : \IR \to D_{\cA} (\theta , p)$ is $T$-periodic with $\|f\|_{\IF_{\cA}} \leq c$, then there exists a unique solution $u : \IR \to D_{\cA}(\theta , p)$ of $\mathrm{(NACP)}$ with the same period $T$ and $u_{|(0,T)} \in \IB_r$. 
\end{theorem}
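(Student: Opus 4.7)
The plan is to solve $\mathrm{(NACP)}$ by a contraction argument in the space of maximal regularity $\IE_{\cA}^{\per}$. To this end I would define, for $r \leq R$ to be fixed later, the map $\Phi : \IB_r \to \IE_{\cA}^{\per}$ by letting $\Phi(v) := u$, where $u$ is the unique strong $T$-periodic solution to the linear problem
\begin{align*}
 u^{\prime}(t) + \cA u(t) = F[v](t) + f(t), \qquad u(t) = u(t+T).
\end{align*}
Since $v \in \IB_r \subset \IB_R$, Assumption~\ref{Ass: Nonlinearity} guarantees $F[v] \in \IF_{\cA}$, and therefore $F[v] + f \in \IF_{\cA}$. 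Hence the periodic Da Prato--Grisvard theorem (Theorem~\ref{Thm: Da Prato-Grisvard periodic}) together with Proposition~\ref{Prop: Uniqueness of linear problem} and Remark~\ref{Rem: Maximal regularity estimate} yield existence, uniqueness, and the a~priori bound
\begin{align*}
 \| \Phi(v) \|_{\IE_{\cA}^{\per}} \leq C \bigl( \| F[v] \|_{\IF_{\cA}} + \| f \|_{\IF_{\cA}} \bigr)
\end{align*}
with a constant $C = C(T , \theta , p) > 0$ that is independent of $v$ and $f$.

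Next I would exploit the regularity of $F$ near the origin. Because $F \in \C^1(\IB_R ; \IF_{\cA})$ with $F(0) = 0$ and $DF(0) = 0$, continuity of $DF$ gives $\eta(r) := \sup_{v \in \IB_r} \| DF(v) \|_{\cL(\IE_{\cA}^{\per} , \IF_{\cA})} \to 0$ as $r \to 0$. The fundamental theorem of calculus in Banach spaces then produces
\begin{align*}
 \| F[v_1] - F[v_2] \|_{\IF_{\cA}} \leq \eta(r) \, \| v_1 - v_2 \|_{\IE_{\cA}^{\per}} \qquad (v_1 , v_2 \in \IB_r),
\end{align*}
and in particular $\| F[v] \|_{\IF_{\cA}} \leq \eta(r) \, \| v \|_{\IE_{\cA}^{\per}} \leq \eta(r) r$ for $v \in \IB_r$.

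With these estimates I would fix $r \in (0 , R]$ so small that $C \eta(r) \leq \tfrac{1}{2}$, and then choose $c := r / (2 C)$. For any $T$-periodic $f$ with $\| f \|_{\IF_{\cA}} \leq c$ the preceding bounds combine into
\begin{align*}
 \| \Phi(v) \|_{\IE_{\cA}^{\per}} \leq C \eta(r) r + C c \leq \tfrac{r}{2} + \tfrac{r}{2} = r, \qquad \| \Phi(v_1) - \Phi(v_2) \|_{\IE_{\cA}^{\per}} \leq \tfrac{1}{2} \| v_1 - v_2 \|_{\IE_{\cA}^{\per}},
\end{align*}
so that $\Phi$ is a self-map on the closed ball $\IB_r$ and a strict contraction there. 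Banach's fixed point theorem produces a unique $u \in \IB_r$ with $\Phi(u) = u$, which is precisely the desired strong $T$-periodic solution to $\mathrm{(NACP)}$; uniqueness within $\IB_r$ is built into the fixed point statement.

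The main technical point to watch is the passage from the $\C^1$ hypothesis on $F$ to the uniform Lipschitz bound with small constant on $\IB_r$: one must be careful that the integral $\int_0^1 DF(s v_1 + (1-s) v_2) \, \d s$ makes sense as a Bochner integral in $\cL(\IE_{\cA}^{\per} , \IF_{\cA})$ and that continuity of $DF$ really forces $\eta(r) \to 0$. Once this is in place, everything else is routine: the linear theory of Section~\ref{Sec: Da Prato--Grisvard} supplies the quantitative maximal regularity estimate, and the smallness of $f$ is matched to $r$ only at the final step.
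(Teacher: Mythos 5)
Your proposal is correct and follows essentially the same route as the paper: the solution operator for the linearized periodic problem is furnished by the periodic Da Prato--Grisvard theorem together with Proposition~\ref{Prop: Uniqueness of linear problem} and Remark~\ref{Rem: Maximal regularity estimate}, the smallness of $\sup_{v\in\IB_r}\|DF(v)\|$ for small $r$ comes from $DF(0)=0$ and continuity of $DF$, and the mean value inequality yields the contraction on $\IB_r$ with $c = r/(2C)$. This matches the paper's argument step for step.
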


\begin{proof}
Let $S : \IB_R \to \IE_{\cA}^{\per}, v \mapsto u_v$ be the solution operator of the linear equation 
\begin{align*}
u_v^{\prime}(t) + \cA u_v(t) = F[v(t)] + f(t)\quad {\rm in }\ (0 , T)
\end{align*}
with $u_v (0) = u_v (T)$. 
This is well-defined since $F[v] \in \IF_{\cA}$ by Assumption~\ref{Ass: Nonlinearity}, so that, by Proposition~\ref{Prop: Uniqueness of linear problem} and Remark~\ref{Rem: Maximal regularity estimate}, $u_v$ uniquely exists and lies in $\IE_{\cA}^{\per}$. \par
We prove that this solution operator is a contraction on $\IB_r$ for some $r \leq R$. 
Let $M > 0$ denote the infimum of all constants $C$ satisfying~\eqref{Eq: Maximal regularity estimate}. Choose $r > 0$ small enough such that 
 \begin{align*}
 \sup_{w \in \IB_r} \|DF[w]\|_{\cL(\IE_{\cA}^{\per},\IF_{\cA})} \leq \frac{1}{2M},  
 \end{align*}
which is possible by Assumption~\ref{Ass: Nonlinearity}. 
By virtue of~\eqref{Eq: Maximal regularity estimate} as well as the mean value theorem, estimate for any $v \in \IB_r$ and $f$ satisfying $\|f\|_{\IF_{\cA}} \leq r / (2M) =: c$,
 \begin{align*}
 \|S(v)\|_{\IE_{\cA}^{\per}} & \leq M (\|F[v]\|_{\IF_{\cA}} + \|f\|_{\IF_{\cA}}) \leq M( \sup_{w \in \IB_r} \|DF[w]\|_{\cL(\IE_{\cA}^{\per},\IF_{\cA})}\|v\|_{\IE_{\cA}^{\per}} + \|f\|_{\IF_{\cA}}) \leq r. 
\end{align*}
So $S(\IB_r) \subset \IB_r$. 
Similarly, for any $v_1, v_2 \in \IB_r$, 
\begin{align*}
\|S(v_1) - S(v_2)\|_{\IE_{\cA}^{\per}} \leq M \sup_{w \in \IB_r} \|DF[w]\|_{\cL(\IE_{\cA}^{\per},\IF_{\cA})}\|v_1 - v_2\|_{\IE_{\cA}^{\per}} \leq \frac{1}{2} \|v_1 - v_2\|_{\IE_{\cA}^{\per}}. 
\end{align*}
Consequently, the solution operator $S$ is a contraction on $\IB_r$ and the contraction mapping theorem is applicable. The solution to $\mathrm{(NACP)}$ is defined as follows. Let $u$ be the unique fixed point of $S$. Since $S u = u$, $u$ satisfies $u(0) = u(T)$ and thus can be extended periodically to the whole real line. This function solves $\mathrm{(NACP)}$.
\end{proof}

\subsection{Two special examples}

A short glimpse towards the models presented in Subsections~\ref{Subsec: FitzHugh--Nagumo}-\ref{Subsec: Allen--Cahn} reveals that one of the following situations occurs:
\begin{itemize}
\item The bidomain operator $A$ appears only in the first but not in the second equation of the bidomain models and the nonlinearity depends linearly on the gating variable $w$. (Subsections~\ref{Subsec: FitzHugh--Nagumo}-\ref{Subsec: Rogers--McCulloch})
\item The ODE and the gating variable $w$ are omitted. (Subsection~\ref{Subsec: Allen--Cahn})
\end{itemize}
As a consequence, in the first situation the operator associated with the linearization of the bidomain models can be written as an operator matrix whose first component of the domain embeds into a $\W^{2 , q}$-space. Since the dynamics of the gating variable is described only by an ODE, there appears no smoothing in the spatial variables of $w$. However, as we aim to employ Theorem~\ref{Thm: Abstract existence theorem} and as the nonlinearity of the first equation depends linearly on $w$, at least in the models of Aliev--Panfilov and Rogers--McCulloch, $w$ must be contained in $D_A (\theta , p)$. Otherwise one cannot view the nonlinearity as a suitable right-hand side as it is done in Subsection~\ref{Subsec: An abstract existence theorem for general types of nonlinearities}. Hence, we choose $D_A (\theta , p)$ as the ground space for the gating variable. \par
To describe this situation in our setup, assume in the following, that $- \cA$ is the generator of a bounded analytic semigroup on a Banach space $X = X_1 \times X_2$, with domain $D(\cA) = D(A_1) \times D(A_2)$, and $0 \in \rho(\cA)$. We further set for some $1 < q < \infty$, $1 \leq p < \infty$, and $\theta \in (0 , 1)$
\begin{align*}
 X_1 = \L^q(\Omega) , \quad D(A_1) = D(A), \quad \text{and} \quad X_2 = D(A_2) = D_A (\theta , p).
\end{align*}
Furthermore, define two types of nonlinearities as follows: For $a_1 , a_2 , a_3 , a_4 \in \IR$ let
\begin{align*}
 F_1 [u_1 , u_2] := \begin{pmatrix} a_1 u_1^2 + a_2 u_1^3 + a_3 u_1 u_2 \\ a_4 u_1^2 \end{pmatrix}
\end{align*}
and for $b_1$, $b_2 \in \IR$ let
\begin{align*}
 F_2 [u_1] := b_1 u_1^2 + b_2 u_1^3 .
\end{align*}
Here, $F_1$ will be a prototype of the nonlinearities considered in Subsections~\ref{Subsec: FitzHugh--Nagumo}-\ref{Subsec: Rogers--McCulloch} and $F_2$ for the one considered in Subsection~\ref{Subsec: Allen--Cahn}. For the moment, the condition $0 \in \rho(\cA)$ seems inappropriate as $0 \notin \rho(A)$. However, we will linearize the bidomain equations around suitable stable stationary solutions and in this situation $0 \in \rho(\cA)$ will be achieved. \par
In the following,  we concentrate only on  $F_1$, since  the results for $F_2$ may be proved  in a similar way. To derive conditions on $p$, $q$, and $\theta$ ensuring that $F_1$ 
satisfies Assumption~\ref{Ass: Nonlinearity}, the following two lemmas are essential. The first one is a consequence of the mixed derivative theorem, see, e.g.,~\cite{DHPIII} and reads as follows.

\begin{lemma}\label{Thm: Mixed derivative theorem}
Let $\Omega \subset \IR^n$ be a bounded $\C^2$-domain, $T > 0$, $1 < p , q < \infty$,  and $\sigma \in [0 , 1]$. Then the following continuous embedding is valid 
\begin{align*}
 \W^{1 , p} (0 , T ; \L^q(\Omega)) \cap \L^p (0 , T ; \W^{2 , q} (\Omega)) \subset \W^{\sigma , p} (0 , T ; \W^{2 (1 - \sigma) , q} (\Omega)).
\end{align*}
\end{lemma}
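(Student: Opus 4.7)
The plan is to deduce the embedding from the mixed derivative theorem of Sobolevski\u{\i}, applied to the temporal and spatial differentiation operators, after first extending everything from $(0,T)\times\Omega$ to the flat reference setting $\IR\times\IR^n$ where Fourier analysis is available. Since $\partial\Omega$ is of class $\C^2$, a standard partition-of-unity argument supplies a bounded linear extension operator $E:\W^{k,q}(\Omega)\to\W^{k,q}(\IR^n)$ simultaneously for $k\in\{0,1,2\}$; an analogous reflection-and-cutoff extension in time carries $\W^{1,p}(0,T;X)\cap\L^p(0,T;X)$ into $\W^{1,p}(\IR;X)\cap\L^p(\IR;X)$ boundedly on both components. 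As restriction is continuous on fractional Sobolev spaces, it suffices to prove the embedding on $\IR\times\IR^n$.

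On $Y:=\L^p(\IR;\L^q(\IR^n))$ I would introduce the two commuting operators $B:=(1-\partial_t^2)^{1/2}$ with $D(B)=\W^{1,p}(\IR;\L^q(\IR^n))$ and $A:=1-\Delta_x$ with $D(A)=\L^p(\IR;\W^{2,q}(\IR^n))$. Both are sectorial of angle zero, their resolvents commute, and each admits a bounded $\mathcal{H}^\infty$-calculus on $Y$; the latter follows from the Kalton--Weis operator-valued Fourier multiplier theorem because $\L^p$ and $\L^q$ are UMD. The mixed derivative theorem in the form of~\cite{DHPIII} then yields, for every $\sigma\in[0,1]$, the estimate
\[
\|B^\sigma A^{1-\sigma} u\|_Y \;\leq\; C\bigl(\|Au\|_Y+\|Bu\|_Y\bigr),
\]
that is, a continuous embedding $D(A)\cap D(B)\hookrightarrow D(B^\sigma A^{1-\sigma})$.

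It remains to identify $D(B^\sigma A^{1-\sigma})$ with $\W^{\sigma,p}(\IR;\W^{2(1-\sigma),q}(\IR^n))$. Via the Fourier symbol and the bounded $\mathcal{H}^\infty$-calculus one obtains $D(A^{1-\sigma})=\L^p(\IR;\W^{2(1-\sigma),q}(\IR^n))$ and $D(B^\sigma)=\W^{\sigma,p}(\IR;\L^q(\IR^n))$, with equivalent norms; for integer exponents this is evident, while for intermediate values the interpolation identities $[\L^q(\IR^n),\W^{2,q}(\IR^n)]_{1-\sigma}\simeq\W^{2(1-\sigma),q}(\IR^n)$ and its temporal analogue (valid in the UMD setting) deliver the match. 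Combining these identifications with the commutativity of the two functional calculi gives $D(B^\sigma A^{1-\sigma})\simeq\W^{\sigma,p}(\IR;\W^{2(1-\sigma),q}(\IR^n))$, and invoking the extension operators produces the embedding on $(0,T)\times\Omega$.

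The main obstacle is not the mixed derivative estimate itself, which is by now a packaged result, but the precise identification of the fractional-power domain $D(B^\sigma A^{1-\sigma})$ with the concrete Sobolev scale appearing in the statement, since for non-integer exponents the Slobodeckij, Bessel-potential, and interpolation scales agree only up to equivalence of norms and only under UMD-type hypotheses. All the vector-valued interpolation and multiplier results needed are nevertheless available in the framework of~\cite{DHPIII}, so the proof amounts to assembling standard ingredients rather than developing new theory.
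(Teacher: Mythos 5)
Your argument is precisely the route the paper intends: the lemma is presented there without proof as a direct consequence of the mixed derivative theorem of~\cite{DHPIII}, and your scheme of extending to $\IR\times\IR^n$ and applying the Sobolevskii inequality to the commuting operators $A=1-\Delta_x$ and $B=(1-\partial_t^2)^{1/2}$, both with bounded $\mathcal{H}^\infty$-calculus on the UMD space $\L^p(\IR;\L^q(\IR^n))$, is the standard way to unpack that citation. The one point to watch --- which you yourself flag --- is that complex interpolation and the fractional-power domains produce Bessel-potential spaces, so for non-integer exponents $\W^{2(1-\sigma),q}$ and $\W^{\sigma,p}$ in the statement must be read in the Bessel-potential rather than the Sobolev--Slobodeckij sense (as in~\cite{DHPIII}); with that reading your identification is correct, and it is also all that is used later in the paper.
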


\begin{lemma}
\label{Lem: Multiplication by Besov functions}
Let $\Omega \subset \IR^n$ be a bounded $\C^2$-domain, $1 \leq p < \infty$, $1 < q < \infty$, $q \leq r, s \leq \infty$, $1/r + 1/s = 1/q$, and $\theta \in (0 , 1 / 2)$. Then there exists a constant $C > 0$ such that for 
\begin{align*}
 \| u v \|_{\B^{2 \theta}_{q , p} (\Omega)} \leq C \| u \|_{\W^{1 , s} (\Omega)} \| v \|_{\B^{2 \theta}_{r , p} (\Omega)} \qquad (u \in \W^{1 , s} (\Omega), v \in \B^{2 \theta}_{r , p} (\Omega)).
\end{align*}
\end{lemma}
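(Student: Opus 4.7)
The plan is to reduce the claim to a real interpolation statement. Since $2\theta \in (0,1)$, the Besov spaces in question admit the identification
\begin{align*}
\B^{2\theta}_{q,p}(\Omega) = (\L^q(\Omega), \W^{1,q}(\Omega))_{2\theta,p}, \qquad \B^{2\theta}_{r,p}(\Omega) = (\L^r(\Omega), \W^{1,r}(\Omega))_{2\theta,p},
\end{align*}
on the bounded $\C^2$-domain $\Omega$. Granting this, it suffices to show that for fixed $u \in \W^{1,s}(\Omega)$, the pointwise multiplication operator $M_u\colon v \mapsto u v$ acts boundedly both from $\L^r(\Omega)$ into $\L^q(\Omega)$ and from $\W^{1,r}(\Omega)$ into $\W^{1,q}(\Omega)$, with norm controlled in each case by a constant multiple of $\|u\|_{\W^{1,s}(\Omega)}$. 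The bilinear form of the real interpolation theorem then gives $M_u\colon \B^{2\theta}_{r,p}(\Omega) \to \B^{2\theta}_{q,p}(\Omega)$ with operator norm $\leq C\|u\|_{\W^{1,s}(\Omega)}$, which is exactly the asserted inequality.

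The two endpoint estimates are immediate from the exponent relation $1/r + 1/s = 1/q$. Indeed, H\"older's inequality yields
\begin{align*}
\|uv\|_{\L^q(\Omega)} \leq \|u\|_{\L^s(\Omega)} \|v\|_{\L^r(\Omega)} \leq C \|u\|_{\W^{1,s}(\Omega)} \|v\|_{\L^r(\Omega)},
\end{align*}
using the trivial embedding $\W^{1,s}(\Omega) \hookrightarrow \L^s(\Omega)$ on the bounded domain $\Omega$ (including the case $s=\infty$). For the $\W^{1,q}$-bound, the product rule combined with H\"older's inequality gives
\begin{align*}
\|\nabla(uv)\|_{\L^q(\Omega)} \leq \|\nabla u\|_{\L^s(\Omega)}\|v\|_{\L^r(\Omega)} + \|u\|_{\L^s(\Omega)}\|\nabla v\|_{\L^r(\Omega)} \leq C\|u\|_{\W^{1,s}(\Omega)}\|v\|_{\W^{1,r}(\Omega)},
\end{align*}
and the $\L^q$-part has already been taken care of. The constants are independent of $v$, so $\|M_u\|_{\cL(\L^r,\L^q)}$ and $\|M_u\|_{\cL(\W^{1,r},\W^{1,q})}$ are both bounded by $C\|u\|_{\W^{1,s}(\Omega)}$, and the real interpolation functor $(\cdot,\cdot)_{2\theta,p}$ is applied to yield the final bound.

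The only step that demands some care is the real interpolation identity for Besov spaces on $\Omega$. For a $\C^2$-domain there is a total extension operator that is simultaneously bounded $\L^q(\Omega)\to \L^q(\IR^n)$ and $\W^{1,q}(\Omega)\to \W^{1,q}(\IR^n)$ for every $q \in (1,\infty]$, and a bounded restriction operator in the other direction; combining this with the classical identification on the whole space (see, e.g., Bergh--L\"ofstr\"om or Triebel) transfers the identity to $\Omega$. Alternatively, one may reduce to the full-space case via a finite cover of $\Omega$ and local charts, where the multiplier estimate is standard. This is the main technical step of the argument; once it is in place, the rest is a direct application of H\"older's inequality and the real interpolation method.
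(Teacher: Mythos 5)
Your proposal is correct and follows essentially the same route as the paper: fix $u$, observe via H\"older's inequality and the product rule that $v \mapsto uv$ is bounded from $\L^r(\Omega)$ to $\L^q(\Omega)$ and from $\W^{1,r}(\Omega)$ to $\W^{1,q}(\Omega)$ with norms controlled by $\|u\|_{\W^{1,s}(\Omega)}$, and then apply the real interpolation functor $(\cdot,\cdot)_{2\theta,p}$ together with the identification $\B^{2\theta}_{\cdot,p}(\Omega) = (\L^{\cdot}(\Omega),\W^{1,\cdot}(\Omega))_{2\theta,p}$ for $2\theta \in (0,1)$. You are in fact somewhat more explicit than the paper, which only records the two endpoint estimates and says ``real interpolation delivers the desired inequality,'' whereas you also justify the interpolation identity on the domain via extension operators.
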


\begin{proof}
Assume first that $v \in \W^{1 , r} (\Omega)$. By H\"older's inequality it follows that
\begin{align*}
 \| u v \|_{\L^q(\Omega)} \leq \| u \|_{\L^s(\Omega)} \| v \|_{\L^r(\Omega)} \quad \text{and} \quad \| u v \|_{\W^{1 , q}(\Omega)} \leq 2 \| u \|_{\W^{1 , s}(\Omega)} \| v \|_{\W^{1 , r}(\Omega)}.
\end{align*}
Now, real interpolation delivers the desired inequality.
\end{proof}

In the following proposition we elaborate the conditions on $p$, $q$, and $\theta$ that ensure that $F$ maps $\IE_{\cA}^{\per}$ into $\IF_{\cA}$.

\begin{proposition}
Let $1 \leq p < \infty$, $n < q < \infty$ satisfy $1/p + n/(2q) \leq 3/4$ and $\theta \in (0 , 1 / 2)$ there exists a constant $C > 0$ such that
\begin{align*}
 \| F_1 (u_1 , u_2) \|_{\IF_{\cA}} \leq C \big( \| u_1 \|_{\IE_{A_1}^{\per}}^2 + \| u_1 \|_{\IE_{A_1}^{\per}}^3 + \| u_1 \|_{\IE_{A_1}^{\per}} \| u_2 \|_{\IE_{A_2}^{\per}} \big)
\end{align*}
for all $u_1 \in \IE_{A_1}^{\per}$ and $u_2 \in \IE_{A_2}^{\per}$.
\end{proposition}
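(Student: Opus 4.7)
The plan is to verify the claimed bound monomial by monomial, combining the Besov-multiplication lemma above with time integration via H\"older's inequality.

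First I would identify $D_A(\theta,p) = \B^{2\theta}_{q,p}(\Omega)$ via the characterization~\eqref{besov}; since $\theta \in (0,1/2)$ the requirement $2\theta < 1+1/q$ is met. Because $0 \in \rho(A_1)$ (after the linearization around the stable equilibrium), elliptic regularity for the bidomain operator gives
\begin{align*}
  \IE_{A_1}^{\per} \hookrightarrow \L^p(0,T;\B^{2+2\theta}_{q,p}(\Omega)) \cap \W^{1,p}(0,T;\B^{2\theta}_{q,p}(\Omega)),
\end{align*}
and the standard trace theorem in the maximal regularity class supplies the further embedding $\IE_{A_1}^{\per} \hookrightarrow \C([0,T];\B^{2\theta+2-2/p}_{q,p})$. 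Since $q>n$, the Sobolev embedding $\B^{2+2\theta}_{q,p} \hookrightarrow \W^{1,\infty}(\Omega)$ is immediate, so in particular $u_1 \in \L^p(0,T;\W^{1,\infty})$. For $u_2 \in \IE_{A_2}^{\per}$, absolute continuity of Bochner--Sobolev paths gives $u_2 \in \C([0,T];\B^{2\theta}_{q,p})$.

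For the quadratic term $a_1 u_1^2$ and the bilinear term $a_3 u_1 u_2$ I would apply Lemma~\ref{Lem: Multiplication by Besov functions} with $s=\infty$, $r=q$ to obtain the pointwise bound
\begin{align*}
  \|u_1(t) v(t)\|_{\B^{2\theta}_{q,p}} \leq C \|u_1(t)\|_{\W^{1,\infty}} \|v(t)\|_{\B^{2\theta}_{q,p}}, \qquad v \in \{u_1,u_2\}.
\end{align*}
H\"older's inequality in time, pairing $\L^p(\W^{1,\infty})$ against $\L^\infty(\B^{2\theta}_{q,p})$, then produces the bounds by $\|u_1\|_{\IE_{A_1}^{\per}}^2$ and $\|u_1\|_{\IE_{A_1}^{\per}} \|u_2\|_{\IE_{A_2}^{\per}}$, respectively.

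The cubic term $a_2 u_1^3$ is the delicate one, and I expect it to be the main obstacle. Applying Lemma~\ref{Lem: Multiplication by Besov functions} twice, peeling off one factor at a time, yields the pointwise estimate
\begin{align*}
  \|u_1^3(t)\|_{\B^{2\theta}_{q,p}} \leq C \|u_1(t)\|_{\W^{1,s_1}} \|u_1(t)\|_{\W^{1,s_2}} \|u_1(t)\|_{\B^{2\theta}_{r,p}}
\end{align*}
subject to the compatibility $1/s_1+1/s_2+1/r = 1/q$ with $q \leq s_1, s_2, r \leq \infty$. To integrate in time I would invoke Lemma~\ref{Thm: Mixed derivative theorem} in its Besov-valued version to extract, for each $\sigma \in [0,1]$, the embedding
\begin{align*}
  \IE_{A_1}^{\per} \hookrightarrow \W^{\sigma,p}(0,T;\B^{2(1-\sigma)+2\theta}_{q,p}),
\end{align*}
and then combine it with one-dimensional Sobolev embedding in time ($\W^{\sigma,p}(0,T) \hookrightarrow \L^{p/(1-\sigma p)}(0,T)$ for $\sigma < 1/p$) and the Besov--Sobolev embedding in space ($\B^{2(1-\sigma)+2\theta}_{q,p} \hookrightarrow \W^{1,s}(\Omega)$ for $n/s \geq n/q + 2\sigma - 1 - 2\theta$). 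Choosing three calibrations $(\sigma_i, s_i)$, $i=1,2,3$, producing time exponents with $1/p_1+1/p_2+1/p_3 = 1/p$ and spatial Lebesgue indices matching the compatibility above, a final H\"older's inequality in time delivers $\|u_1^3\|_{\L^p(\B^{2\theta}_{q,p})} \leq C \|u_1\|_{\IE_{A_1}^{\per}}^3$. The hypothesis $1/p + n/(2q) \leq 3/4$ is used precisely here: it forces $p > 4/3$ and guarantees that a simultaneously admissible choice of $(\sigma_i, s_i)$ exists for every $\theta \in (0,1/2)$. Finally, the second component $a_4 u_1^2$, valued in $D_{A_2}(\theta,p) = \B^{2\theta}_{q,p}$, is estimated exactly as the first-coordinate quadratic term.
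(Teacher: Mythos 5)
Your proof is essentially correct and uses the same toolbox as the paper (the Besov multiplication lemma, the mixed derivative theorem, Sobolev embeddings in time and space), but it takes a more circuitous route for the power terms. The paper treats the bilinear term $u_1u_2$ exactly as you do -- Lemma~\ref{Lem: Multiplication by Besov functions} with $r=q$, $s=\infty$, and an $\L^p \times \L^\infty$ pairing in time using $\W^{1,p}(0,T;\B^{2\theta}_{q,p}) \subset \L^\infty(0,T;\B^{2\theta}_{q,p})$ and $D(A_1)\subset\W^{2,q}\subset\W^{1,\infty}$. For $u_1^\alpha$, $\alpha\in\{2,3\}$, however, the paper does \emph{not} invoke the Besov multiplication lemma at all: it first uses the elementary embedding $\W^{1,q}(\Omega)\subset\B^{2\theta}_{q,p}(\Omega)$ (valid since $2\theta<1$), then the Leibniz--H\"older chain $\|u_1^\alpha\|_{\L^p(0,T;\W^{1,q})}\leq C\|u_1\|^{\alpha}_{\L^{\alpha p}(0,T;\W^{1,\alpha q})}$, and finally a single application of Lemma~\ref{Thm: Mixed derivative theorem} in its stated $\W^{1,p}(\L^q)\cap\L^p(\W^{2,q})$ form, picking one $\sigma$ for each $\alpha$; the constraints $\sigma-1/p\geq-1/(\alpha p)$ and $2(1-\sigma)-n/q\geq 1-n/(\alpha q)$ are compatible for both $\alpha=2,3$ precisely when $1/p+n/(2q)\leq 3/4$. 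Your treatment of the cubic term instead peels with the Besov lemma twice and then appeals to a Besov-valued version of the mixed derivative theorem ($\IE_{A_1}^{\per}\hookrightarrow\W^{\sigma,p}(0,T;\B^{2(1-\sigma)+2\theta}_{q,p})$) together with three separate time/space calibrations; this is workable, but it relies on an interpolated variant of Lemma~\ref{Thm: Mixed derivative theorem} that the paper never states, and the spatial calibration for the remaining Besov factor (an embedding $\B^{2(1-\sigma_3)+2\theta}_{q,p}\hookrightarrow\B^{2\theta}_{r,p}$, distinct from the $\W^{1,s}$ targets) is glossed over in your sketch. The paper's uniform treatment of both powers via $\W^{1,\alpha q}$ is cleaner and makes the origin of the hypothesis $1/p+n/(2q)\leq 3/4$ transparent, while your hybrid handling of the quadratic ($\L^p\times\L^\infty$ pairing) and cubic (iterated multiplication) works but obscures the fact that the quadratic case needs only the weaker condition $1/p+n/(2q)\leq 1$.
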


\begin{proof}
We start with the first component of $F_1$. By \eqref{besov}  we have $D_A (\theta , p) = \B^{2 \theta}_{q , p} (\Omega)$ and Lemma~\ref{Lem: Multiplication by Besov functions} implies
 \begin{align*}
  \| u_1 u_2 \|_{\L^p(0 , T ; D_A (\theta , p))}^p &\leq C \| u_1 u_2 \|_{\L^p(0 , T ; \B^{2 \theta}_{q , p} (\Omega))}^p \leq C \int_0^T \| u_1 \|_{\W^{1 , \infty} (\Omega)}^p \| u_2 \|_{\B^{2 \theta}_{q , p} (\Omega)}^p \; \d t,
\end{align*}
by choosing $r = q$, $s = \infty$ in Lemma \ref{Lem: Multiplication by Besov functions}. 
Using that $\W^{1 , p} (0 , T ; \B^{2 \theta}_{q , p} (\Omega)) \subset \L^{\infty} (0 , T ; \B^{2 \theta}_{q , p} (\Omega))$ delivers
\begin{align*}
 \| u_1 u_2 \|_{\L^p(0 , T ; D_A (\theta , p))}^p \leq C \| u_2 \|_{\W^{1 , p} (0 , T ; \B^{2 \theta}_{q , p} (\Omega))}^p \| u_1 \|_{\L^p(0 , T ; \W^{1 , \infty} (\Omega))}^p.
 \end{align*}
 Finally, note that $D(A_1) \subset \W^{2 , q} (\Omega) \subset \W^{1 , \infty} (\Omega)$ if $n < q$. Next, by the continuous embedding 
$\W^{1 , q} (\Omega) \subset \B^{2 \theta}_{ q , p} (\Omega)$, H\"older's inequality and the mixed derivative theorem, we obtain  for $\alpha \in \{ 2 , 3 \}$
 \begin{align*}
  \| u_1^{\alpha} \|_{\L^p (0 , T ; D_A (\theta , p))} &\leq C \| u_1 \|_{\L^{\alpha p} (0 , T ; \W^{1 , \alpha q} (\Omega))}^{\alpha} 
  \leq C \| u_1 \|_{\W^{\sigma , p} (0 , T ; \W^{2(1- \sigma), q} (\Omega))}^{\alpha}.
 \end{align*}
 provided $\sigma \in [0,1]$ satisfies 
 \begin{align*}
\sigma - 1/p \geq - 1/(\alpha p),~~~{\rm and}~~~2(1-\sigma) - n/q \geq 1- n/(\alpha q). 
 \end{align*}
 The condition $1/p + n/(2q) \leq 3/4$ guarantees the existence of $\sigma$ for $\alpha \in \{ 2 , 3 \}$. The second component of $F_1$ was already estimated above. 
 \end{proof}

Finally, by definition of $F_1$ it is clear that $F_1 (0 , 0) = 0$. Moreover, due to the polynomial structure of $F_1$ it is clear that $F_1$ is Fr\'echet differentiable 
with $D F_1 (0 , 0) = 0$. Hence, we have the following proposition.

\begin{proposition}\label{Prop: Nonlinearities satisfy assumptions}
With the definitions of this subsection the nonlinearities $F_1$ and $F_2$ satisfy Assumption~\ref{Ass: Nonlinearity}.
\end{proposition}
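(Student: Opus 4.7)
The plan is to verify the three requirements of Assumption~\ref{Ass: Nonlinearity} in turn: the mapping property $F_i : \IB_R \to \IF_{\cA}$, the vanishing $F_i(0) = 0$ and $DF_i(0) = 0$, and the $\C^1$-regularity. The vanishing at zero is immediate from the definitions, since every monomial appearing in $F_1$ and $F_2$ has homogeneity degree at least two, so $F_i(0) = 0$ trivially. Moreover, because every term has degree at least two, the formal derivative evaluated at zero vanishes as well, which will deal with the condition $DF_i(0) = 0$ once the Fréchet derivative has been correctly identified.

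For the mapping property of $F_1$ into $\IF_{\cA} = \L^p(0,T;D_{\cA}(\theta,p))$ I would simply quote the preceding proposition, which gives
\begin{align*}
 \| F_1(u_1,u_2) \|_{\IF_{\cA}} \leq C\big( \| u_1 \|_{\IE^{\per}_{A_1}}^2 + \| u_1 \|_{\IE^{\per}_{A_1}}^3 + \| u_1 \|_{\IE^{\per}_{A_1}} \| u_2 \|_{\IE^{\per}_{A_2}} \big).
\end{align*}
For $F_2$ the same argument as in that proposition (using $\W^{1,q} \hookrightarrow \B^{2\theta}_{q,p}$, Hölder, and the mixed derivative theorem under the numerology $1/p + n/(2q) \leq 3/4$, $n<q<\infty$) gives $\| F_2(u_1) \|_{\IF_{\cA}} \leq C(\|u_1\|^2 + \|u_1\|^3)$. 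In particular, by choosing $R$ sufficiently small, $F_i$ maps $\IB_R$ into $\IF_{\cA}$.

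For the $\C^1$-property I would exploit the polynomial structure: a polynomial map between Banach spaces is smooth, with Fréchet derivative given by the formal derivative. Concretely,
\begin{align*}
 DF_1(u_1,u_2)[v_1,v_2] = \begin{pmatrix} (2 a_1 u_1 + 3 a_2 u_1^2 + a_3 u_2) v_1 + a_3 u_1 v_2 \\ 2 a_4 u_1 v_1 \end{pmatrix},
\end{align*}
and analogously $DF_2(u_1)[v_1] = (2b_1 u_1 + 3 b_2 u_1^2) v_1$. To see that this defines a bounded operator $\IE^{\per}_{\cA} \to \IF_{\cA}$ depending continuously on the base point, I would re-run exactly the multiplicative estimates from the preceding proposition: the bilinear bound $\| u v \|_{\IF_{\cA}} \leq C \| u \|_{\IE^{\per}_{A_1}} \| v \|_{\IE^{\per}_{A_2}}$ together with the trilinear analogue for the $u_1^2 v_1$ terms (obtained by applying Lemma~\ref{Lem: Multiplication by Besov functions} together with $\W^{1,\infty}(\Omega)\hookrightarrow D(A_1)$, the embedding $\W^{1,p}(0,T;\B^{2\theta}_{q,p})\hookrightarrow \L^\infty(0,T;\B^{2\theta}_{q,p})$ and the mixed derivative theorem) gives both boundedness of $DF_i(u)$ and a Lipschitz-type bound $\|DF_i(u) - DF_i(\tilde u)\| \leq C(R)\|u - \tilde u\|_{\IE^{\per}_{\cA}}$ on $\IB_R$. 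Evaluating the explicit formulas at $u=0$ gives $DF_i(0)=0$.

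The main obstacle is really just bookkeeping: ensuring that the multilinear estimates underlying the preceding proposition apply uniformly to each cross term appearing in $DF_1$ (in particular the term $a_3 u_1 v_2$, which pairs a $\W^{1,\infty}$-factor coming from $\IE^{\per}_{A_1}$ with a $D_A(\theta,p)$-factor coming from $\IE^{\per}_{A_2}$, and the term $3 a_2 u_1^2 v_1$, which uses the mixed derivative theorem to absorb two factors of $u_1$). Once this is checked, the $\C^1$-property, the mapping into $\IF_{\cA}$, and the vanishing at zero are assembled, and Assumption~\ref{Ass: Nonlinearity} follows.
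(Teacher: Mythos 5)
Your proposal follows exactly the paper's route: it invokes the preceding estimate $\| F_1(u_1,u_2) \|_{\IF_{\cA}} \leq C\big( \| u_1 \|^2 + \| u_1 \|^3 + \| u_1 \|\,\| u_2 \| \big)$ for the mapping property, observes $F_i(0)=0$ by inspection, and appeals to the polynomial structure to get continuous Fr\'echet differentiability with $DF_i(0)=0$. The extra explicit formula for $DF_1$ and the remark that the same multilinear estimates control the derivative are just the bookkeeping the paper leaves implicit; the argument is the same.
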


\section{Proofs of the Main Theorems}\label{Sec: Examples}

Before treating the models described in Section~\ref{Sec: Main results}, we remark that the linear part of the bidomain systems will be represented as an  operator matrix  
and it will be eminent that the negative of this operator matrix generates a bounded analytic semigroup. This will be proven in the following lemma.

\begin{lemma}
\label{Lem: operator matrix}
Let $- B$ be the generator of a bounded analytic semigroup on a Banach space $X_1$ with $0 \in \rho(B)$, $1 \leq p < \infty$, and $\theta \in (0 , 1)$. Let $X_2 = D_B (\theta , p)$ and define for $d > 0$ and $b , c \geq 0$ the operator $\cA : X := X_1 \times X_2 \to X$ with domain $D (\cA) := D(B) \times X_2$ by
 \begin{align*}
  \cA := \begin{pmatrix} B & b \\ - c & d \end{pmatrix}.
 \end{align*}
Then $- \cA$ generates a bounded analytic semigroup on $X$ with $0 \in \rho(\cA)$.
\end{lemma}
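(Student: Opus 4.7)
The plan is a perturbation argument around the diagonal part. Write $\cA = \cA_0 + P$ on the common domain $D(\cA_0) := D(B) \times X_2$, with
\[
\cA_0 := \begin{pmatrix} B & 0 \\ 0 & d\,\Id_{X_2} \end{pmatrix}, \qquad P := \begin{pmatrix} 0 & b \\ -c & 0 \end{pmatrix}.
\]
First I would check that $-\cA_0$ generates a bounded analytic semigroup on $X = X_1 \times X_2$ with $0 \in \rho(\cA_0)$: the $X_1$-component does so by hypothesis, while on $X_2$ the operator $-d\,\Id_{X_2}$ is bounded with spectrum $\{-d\}$ in the open left half-plane, and hence generates the bounded analytic semigroup $\e^{-dt}\Id_{X_2}$; the product structure takes care of the rest.

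Next I would verify that $P$ is relatively $\cA_0$-bounded with arbitrarily small relative bound. The upper-right entry of $P$ is bounded from $X_2$ to $X_1$ since $X_2 = D_B(\theta,p) \hookrightarrow X_1$. For the lower-left entry, the embedding $D(B) \hookrightarrow D_B(\theta,p)$ combined with the real interpolation inequality and Young's inequality gives, for every $\eps>0$,
\[
 \|u_1\|_{X_2} = \|u_1\|_{D_B(\theta,p)} \leq \eps \|u_1\|_{D(B)} + C_\eps \|u_1\|_{X_1}.
\]
Since $0 \in \rho(B)$ we have $\|u_1\|_{D(B)} \sim \|Bu_1\|_{X_1}$, and these two estimates together yield
\[
 \|P(u_1,u_2)\|_X \leq \eps'\|\cA_0(u_1,u_2)\|_X + C'_\eps\|(u_1,u_2)\|_X.
\]
By the classical perturbation theorem for generators of analytic semigroups (e.g.\ \cite[Prop.~2.4.3]{Lun95}) this implies that $-\cA = -(\cA_0+P)$ generates an analytic semigroup on $X$.

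To promote this to a \emph{bounded} analytic semigroup and obtain $0 \in \rho(\cA)$, I would perform a direct resolvent analysis of the $2 \times 2$ block system. For $\lambda$ in a sector $\Sigma_{\pi-\eps}$ the equation $(\lambda+\cA)(u_1,u_2)=(f_1,f_2)$ is, after eliminating $u_2 = (\lambda+d)^{-1}(cu_1+f_2)$ from the second row, equivalent to
\[
 \bigl(\lambda + B + bc(\lambda+d)^{-1}\bigr) u_1 = f_1 - b(\lambda+d)^{-1} f_2.
\]
For large $|\lambda|$ in the sector, factoring out $\lambda + B$ and expanding the correction term in a Neumann series using $\|(\lambda+B)^{-1}\|\leq M/|\lambda|$ yields the pencil inverse and the bound $\|(\lambda+\cA)^{-1}\|_{\cL(X)} \leq C/|\lambda|$, which is the desired bounded analyticity. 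At $\lambda=0$ the reduced equation becomes $(B+(bc/d)\Id)u_1 = f_1 - (b/d)f_2$; since $-B$ generating a bounded analytic semigroup forces $\sigma(B) \subset \overline{\Sigma_\omega}$ for some $\omega < \pi/2$, and since $b, c \geq 0$, $d > 0$, and $0 \in \rho(B)$, the shifted operator $B + (bc/d)\Id$ has spectrum uniformly bounded away from $0$, so $0 \in \rho(\cA)$.

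I expect the main subtlety to lie in the last step: a purely abstract perturbation theorem only delivers analyticity of $\e^{-t\cA}$, while boundedness of the semigroup together with $0 \in \rho(\cA)$ must be read off the direct block computation above. It is precisely there that the sign conditions $b, c \geq 0$ and $d > 0$ enter decisively, ensuring that $B + (bc/d)\Id$ remains spectrally bounded away from $0$.
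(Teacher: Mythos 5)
Your reduction of the resolvent problem to the pencil $\bigl(\lambda + \tfrac{bc}{\lambda+d} + B\bigr)u_1 = f_1 - b(\lambda+d)^{-1}f_2$ is exactly the identity the paper works with, and your treatment of $\lambda = 0$ (invertibility of $B + \tfrac{bc}{d}\,\Id$ from sectoriality of angle $<\pi/2$ and $b,c\ge 0$, $d>0$) matches the paper's explicit formula for $\cA^{-1}$. The preliminary perturbation step ($\cA = \cA_0 + P$ with relative bound $0$ via interpolation and Young) is correct but is not used by the paper and, as you yourself note, does not deliver boundedness. The genuine gap is the intermediate regime: to conclude that $-\cA$ generates a \emph{bounded} analytic semigroup you need $\Sigma_\beta \subset \rho(-\cA)$ together with $\sup_{\lambda \in \Sigma_\beta} \|\lambda(\lambda+\cA)^{-1}\|_{\Lop(X)} < \infty$ for some $\beta > \pi/2$, whereas your argument covers only $|\lambda| \ge M$ (Neumann series) and $\lambda = 0$. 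Nothing you wrote excludes spectrum of $-\cA$ at a point such as $\lambda = \ii M/2$: the abstract perturbation theorem only gives $\rho(-\cA) \supset \omega_0 + \Sigma_{\pi/2+\delta}$ for some possibly positive shift $\omega_0$, and invertibility of $\cA$ says nothing about nonzero $\lambda$ of moderate modulus. This is where the paper spends most of its effort: it verifies that $\lambda + \tfrac{bc}{\lambda+d}$ stays inside the sector of resolvent estimates for $B$ for \emph{every} $\lambda \in \Sigma_\beta$, and for small $|\lambda|$ this forces $\beta$ to be taken close to $\pi/2$ depending on $b$, $c$, $d$ — so your claim of working in $\Sigma_{\pi-\eps}$ is too optimistic. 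Once all of $\Sigma_\beta \setminus \{0\}$ is known to lie in $\rho(-\cA)$, the uniform bound on the remaining compact set $\overline{\Sigma_\beta} \cap \overline{B(0,M)}$ follows from continuity of $\lambda \mapsto \lambda(\lambda+\cA)^{-1}$ and $0 \in \rho(\cA)$; that continuity/compactness step is how the paper closes the argument and should appear in yours.

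A smaller omission: your Neumann-series bound controls $\|u_1\|_{X_1}$, but the desired estimate is in $\Lop(X)$ with $X = X_1 \times X_2$. For $\|u_2\|_{X_2} \le |\lambda+d|^{-1}\bigl(c\|u_1\|_{X_2} + \|f_2\|_{X_2}\bigr)$ you still need $\|u_1\|_{X_2} \lesssim \|(f_1,f_2)\|_{X}$, which does follow from the interpolation inequality $\|u_1\|_{X_2} \le C\|u_1\|_{X_1}^{1-\theta}\|Bu_1\|_{X_1}^{\theta}$ you already invoked, together with reading off $\|Bu_1\|_{X_1}$ from the equation (the paper handles the analogous terms by combining this inequality with resolvent estimates of $B$ on $X_2$, which hold because the resolvent commutes with the semigroup defining $D_B(\theta,p)$). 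This is routine but must be said, since the choice $X_2 = D_B(\theta,p)$ is precisely what makes the lemma work.
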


\begin{proof}
Let $\Sigma_{\omega}$, $\omega \in (\pi / 2 , \pi]$, be a sector that satisfies $\rho(- B) \subset \Sigma_{\omega}$ with
\begin{align*}
 \| \lambda (\lambda + B)^{-1} \|_{\Lop(X_1)} \leq C \qquad (\lambda \in \Sigma_{\omega}).
\end{align*}
First note that $0 \in \rho(\cA)$; its inverse being
\begin{align*}
 \cA^{-1} = \begin{pmatrix} d & - b \\ c & B \end{pmatrix} (b c + d B)^{-1}.
\end{align*}
Note that the choice $X_2 = D_B (\theta , p)$ is used here as $\cA^{-1}$ is only an operator from $X_1 \times X_2$ onto $D(B) \times X_2$ if $D(B) \subset X_2 \subset X_1$ and if $B (b c + d B)^{-1}$ maps $X_2$ into $X_2$. By the definition of $D_B (\theta , p)$ in~\eqref{Eq: Characterization real interpolation space} this latter is satisfied. \par
For the resolvent problem let $\lambda \in \Sigma_{\beta}$, $\beta \in (\pi / 2 , \omega)$ to be chosen. Then,
\begin{align*}
 (\lambda + \cA)^{-1} = (\lambda + d)^{-1} \begin{pmatrix} \lambda + d & -b \\ c & \lambda + B \end{pmatrix} \Big( \lambda + \frac{bc}{\lambda + d} + B \Big)^{-1}
\end{align*}
whenever $\lambda + \frac{bc}{\lambda + d} \in \rho(- B)$. To determine the angle $\beta$ for which $\lambda + \frac{bc}{\lambda + d} \in \rho(- B)$ distinguish between the cases $\lvert \lambda \rvert < M$ and $\lvert \lambda \rvert \geq M$ for some suitable constant $M > 0$. Notice that only the case $b , c > 0$ is of interest. Let $C_{\omega} > 0$ be a constant depending solely on $\omega$ such that $\lvert \lambda + d \rvert \geq C_{\omega} (\lvert \lambda \rvert + d)$. Choose $M$ such that $\lvert \lambda \rvert \geq M$ if and only if
\begin{align}
\label{Eq: Defining condition for M}
 C_{\omega} \sin(\omega - \beta) [ \lvert \lambda \rvert^2 + d \lvert \lambda \rvert ] \geq 2 b c.
\end{align}
 This implies
\begin{align*}
 \Big\lvert \frac{b c}{\lambda + d} \Big\rvert \leq \frac{b c}{C_{\omega} (\lvert \lambda \rvert + d)} \leq \frac{\lvert \lambda \rvert \sin(\omega - \beta)}{2}
\end{align*}
and thus that $\lambda + \frac{b c}{\lambda + d} \in \Sigma_{\omega}$. Moreover,
\begin{align}
\label{Eq: Comparison with lambda}
 \Big\lvert \lambda + \frac{bc}{d + \lambda} \Big\rvert \geq \lvert \lambda \rvert \Big( 1 - \frac{\sin (\omega - \beta)}{2} \Big).
\end{align}
Next, choose $\beta$ that close to $\pi / 2$ such that
\begin{align}
\label{Eq: Small lambda}
 M \sin(\beta - \pi / 2) \leq \frac{b c d}{b c + (d + M)^2}. 
\end{align}
Notice that $M$ itself depends on $\beta$, however, it depends only uniformly on its distance to $\omega$ by~\eqref{Eq: Defining condition for M}. In the case $\lvert \lambda \rvert < M$ the validity of~\eqref{Eq: Small lambda} together with trigonometric considerations implies that $\Re\big( \lambda + \frac{bc}{d + \lambda} \big) \geq 0$ proving that under conditions~\eqref{Eq: Defining condition for M} and~\eqref{Eq: Small lambda} we have $\lambda + \frac{bc}{d + \lambda} \in \Sigma_{\omega}$ whenever $\lambda \in \Sigma_{\beta}$. We conclude that $\lambda \in \rho(- \cA)$. To obtain the resolvent estimate, we calculate
\begin{align*}
 \| \lambda (\lambda + \cA)^{-1} &\|_{\Lop(X)} \leq \Big\| \lambda \Big( \lambda + \frac{bc}{\lambda + d} + B \Big)^{-1} \Big\|_{\Lop(X_1)} + \Big\lvert \frac{\lambda b}{\lambda + d} \Big\rvert \Big\| \Big( \lambda + \frac{bc}{\lambda + d} + B \Big)^{-1} \Big\|_{\Lop(X_2 , X_1)} \\
 & + \Big\lvert \frac{\lambda c}{\lambda + d} \Big\rvert \Big\| \Big( \lambda + \frac{bc}{\lambda + d} + B \Big)^{-1} \Big\|_{\Lop(X_1 , X_2)} + 
\Big \lvert \frac{\lambda}{\lambda + d} \Big\rvert \Big\| (\lambda + B) \Big( \lambda + \frac{bc}{\lambda + d} + B \Big)^{-1} \Big\|_{\Lop(X_2)}. 
\end{align*}
The first term on the right-hand side is directly handled by the resolvent estimate of $B$. The second is treated by this resolvent estimate as well and by noting that $X_2 \subset X_1$. The fourth term is estimated by using that the definition of $X_2$ in~\eqref{Eq: Characterization real interpolation space} implies resolvent estimates in $X_2$ (the resolvent commutes with the semigroup appearing in~\eqref{Eq: Characterization real interpolation space}). For the third term, the estimate follows from the invertibility of $B$ and the interpolation inequality $\| x \|_{X_2} \leq C \| x \|_{X_1}^{1 - \theta} \| B x \|_{X_1}^{\theta}$. Altogether, this yields
\begin{align*}
 \| \lambda (\lambda + \cA)^{-1} \|_{\Lop(X)} &\leq C \bigg( \lvert \lambda \rvert + \Big\lvert \frac{\lambda b}{\lambda + d} \Big\rvert + \Big\lvert \frac{\lambda c}{\lambda + d} \Big\rvert \Big\lvert \lambda + \frac{b c}{\lambda + d} \Big\rvert^{\theta} + \Big\lvert \frac{\lambda^2}{\lambda + d} \Big\rvert \bigg) \Big\lvert \lambda + \frac{b c}{\lambda + d} \Big\rvert^{-1} 
 + C \Big\lvert \frac{\lambda}{\lambda + d} \Big\rvert.
\end{align*}
The resolvent estimate for $\lvert \lambda \rvert \geq M$ follows by means of the uniform boundedness of the term $\lvert \lambda / (\lambda + d) \rvert$ and by~\eqref{Eq: Comparison with lambda}. \par
For $\lvert \lambda \rvert< M$ the function $\lambda \mapsto \lambda (\lambda + \cA)^{-1}$ is continuous on $\overline{\Sigma_{\beta}} \cap \overline{B(0 , M)}$ since $0 \in \rho(\cA)$. This implies the resolvent estimate also for small $\lambda$.
\end{proof}

Now, we are ready to prove the main results presented in Section~\ref{Sec: Main results}. To do so, the equilibrium points of the nonlinearities are calculated for the respective models. Afterwards, the solutions to the bidomain models are written as the sum of the equilibrium solution and a perturbation. This results in an equation for the perturbation which is shown via Theorem~\ref{Thm: Abstract existence theorem} to have strong periodic solutions for suitable equilibrium points.

\subsection{The periodic bidomain FitzHugh--Nagumo equation}

Recall the periodic bidomain FitzHugh--Nagumo equation
\begin{align}
\left\{
\begin{aligned}
\partial_t u+\varepsilon Au&=I-\frac{1}{\varepsilon}[u^3-(a+1)u^2+au+w] \qquad&{\rm in}\ \IR\times \Omega,\\
\partial_t w &=cu-bw &{\rm in}\ \IR\times \Omega,\\
u(t) &= u(t+T) &{\rm in} \ \IR \times \Omega,\\
w(t) &= w(t+T) &{\rm in} \ \IR \times \Omega.
\end{aligned}\label{Eq: bfhn2}
\right.
\end{align}

In order to calculate the equilibrium points, we consider
\begin{align}
u^3-(a+1)u^2+au+w&=0, \label{Eq: fhnes1}\\
 cu-bw &=0. \label{Eq: fhnes2}
\end{align}
Then, the equilibrium points are $(u_1,w_1)=(0,0)$ and assuming $c<b\big(\frac{(a+1)^2}{4}-a\big)$, we obtain furthermore
\begin{align}
(u_2,w_2)&=\left(\frac12(a+1-d),\frac{c}{2b}(a+1-d)\right),\label{Eq: fhnep2.2}\\
(u_3,w_3)&=\left(\frac12(a+1 + d),\frac{c}{2b} (a+1+d)\right),\label{Eq: fhnep3.2}
\end{align}
with $d=\sqrt{(a+1)^2-4(a+\frac{c}{b})}$.
In the following, we use the results from Sections~\ref{Sec: Da Prato--Grisvard} and~\ref{Sec: The periodic solution} to obtain periodic solutions in a neighborhood of these equilibrium points. For this purpose, we use Taylor expansion at the equilibrium points and perform the following change of variables 
\begin{align*}
\begin{pmatrix}v\\ z\end{pmatrix}:=\begin{pmatrix}u-u_i \\ w-w_i\end{pmatrix}
\end{align*}
for $i=1,2,3$. Then, functions $F$ and $G$ describing the ionic transport defined as in the introduction read as follows
\begin{align*}
F(v , z) &= \frac{1}{\varepsilon}[v^3 + (3u_i - a -1)v^2 + (3 u_i^2 -2(a+1)u_i +a)v + z],\\
G(v , z) &= -cv + bz.
\end{align*}
Plugging this into equation~\eqref{Eq: bfhn2} and shifting the linear parts of $F$ and $G$ to the left-hand side yields
\begin{align}
\left\{
\begin{aligned}
\partial_t \begin{pmatrix}v \\ z\end{pmatrix}+\begin{pmatrix}\varepsilon A+\frac{1}{\varepsilon}[3u_i^2-2(a+1)u_i+a]& \frac{1}{\varepsilon} \\ -c &b\end{pmatrix}\begin{pmatrix}v \\ z\end{pmatrix}&=\begin{pmatrix}I-\frac{1}{\varepsilon} [v^3+(3u_i-a-1)v^2]\\ 0\end{pmatrix},\\
v(t)&=v(t+T),\\
z(t)&=z(t+T).
\end{aligned}\label{Eq: bfhn shifted}
\right.
\end{align}


First of all, notice that Proposition~\ref{Prop: Nonlinearities satisfy assumptions} implies that the nonlinearity in~\eqref{Eq: bfhn shifted} satisfies Assumption~\ref{Ass: Nonlinearity}. Next, regarding the system with respect to the equilibrium point $(0 , 0)$, then $-(\varepsilon A +\frac{a}{\varepsilon})$ generates a bounded analytic semigroup by Proposition~\ref{Prop: Giga-Kajiwara} and since $0 \in \rho(\varepsilon A +\frac{a}{\varepsilon})$, we may apply 
Lemma~\ref{Lem: operator matrix} to conclude that the negative of the operator matrix in~\eqref{Eq: bfhn shifted} has zero in its resolvent set and generates a bounded analytic semigroup. Consequently, Theorem~\ref{Thm: Abstract existence theorem} is applicable in the case of the equilibrium point $(0 , 0)$ and delivers a unique strong periodic solution $(v , z)$ to~\eqref{Eq: bfhn shifted} in the desired function space for small periodic forcings $I$. \par
For the second equilibrium point we have $3u_2^2-2(a+1)u_2+a<0$. Since $0 \in \sigma(A)$ the operator $- (\varepsilon A+\frac{1}{\varepsilon}[3u_2^2-2(a+1)u_2+a])$ does not generate a bounded analytic semigroup so that Lemma~\ref{Lem: operator matrix} is not applicable. \par
If
\begin{align*}
u_3>\frac{a+1+\sqrt{(a+1)^2-3a}}{3},
\end{align*} 
we obtain $3u_3^2-2(a+1)u_3+a>0$. Thus, $-(\varepsilon A +\frac{1}{\varepsilon}[3u_3^2-2(a+1)u_3+a])$ generates a bounded analytic semigroup by Proposition~\ref{Prop: Giga-Kajiwara} and  $0\in \rho (\varepsilon A+\frac{1}{\varepsilon} [3u_3^2-2(a+1)u_3+a])$. Hence, we can apply Lemma~\ref{Lem: operator matrix} to conclude that the negative of the operator matrix in~\eqref{Eq: bfhn shifted} has zero in its resolvent set and generates a bounded analytic semigroup. Consequently, Theorem~\ref{Thm: Abstract existence theorem} is applicable in this case of the equilibrium point $(u_3 , w_3)$ and delivers a unique strong periodic solution $(v , z)$ to~\eqref{Eq: bfhn shifted} in the desired function spaces for small periodic forcings $I$. This proves Theorem~\ref{Thm: FitzHugh-Nagumo}.


\subsection{The periodic bidomain Aliev--Panfilov equation}
Recall the periodic bidomain Aliev--Panfilov equation
\begin{align}
\left\{
\begin{aligned}
\partial_t u+ \varepsilon Au&=I-\frac{1}{\varepsilon}[ku^3-k(a+1)u^2+kau+uw] \qquad &{\rm in}\ \IR\times \Omega,\\
\partial_t w &= -(ku(u-1-a)+dw) &{\rm in}\ \IR\times \Omega,\\
u(t) &= u(t+T) &{\rm in} \ \IR \times \Omega,\\
w(t) &= w(t+T) &{\rm in} \ \IR \times \Omega.
\end{aligned} \label{Eq: bap2}
\right.
\end{align}
In order to calculate the equilibrium points, we consider
\begin{align}
ku^3-k(a+1)u^2+kau+uw&=0, \label{Eq: apes1}\\
ku(u-1-a)+dw&=0. \label{Eq: apes2}
\end{align}
Then, the equilibrium points are $(u_1,w_1)=(0,0)$ and, if we assume $\frac{(a+1)^2}{4} + \frac{da}{1-d} > 0$, furthermore
\begin{align}
(u_2 , w_2) &= \left(\frac{a+1}{2}-e, -k u_2^2 + k(a+1)u_2-ka\right),\label{Eq: apep2.2}\\
(u_3 , w_3) &= \left(\frac{a+1}{2}+e, -k u_3^2 + k(a+1)u_3-ka\right)\label{Eq: apep3.3}.
\end{align}
with $e=\sqrt{\frac{(a+1)^2}{4}+\frac{da}{1-d}}$. In the following, we want to use the results from Sections~\ref{Sec: Da Prato--Grisvard} and~\ref{Sec: The periodic solution} to obtain periodic solutions in a neighborhood of these equilibrium points. For this purpose, we use Taylor expansion at the equilibrium points and perform the following change of variables 
\begin{align*}
\begin{pmatrix}v\\ z\end{pmatrix}:=\begin{pmatrix}u-u_i \\ w-w_i\end{pmatrix}
\end{align*}
for $i=1,2,3$. Then, functions $F$ and $G$ describing the ionic transport defined as in the introduction read as follows
\begin{align*}
F(v , z) &= \frac{1}{\varepsilon} [kv^3+(3ku_i-k(a+1))v^2+(3ku_i^2-2k(a+1)u_i+ka+w_i)v+u_iz+vz],\\
G(v , z) &= (2ku_i - k(a+1))v +dz + kv^2.
\end{align*}

Plugging this into equation~\eqref{Eq: bap2} and shifting the linear parts of $F$ and $G$ to the left-hand side yields
\begin{align}
\left\{
\begin{aligned}
\partial_t \begin{pmatrix}v \\ z\end{pmatrix}+&\begin{pmatrix}\varepsilon A+\frac{1}{\varepsilon} [3ku_i^2-2k(a+1)u_i+ka+w_i]& \frac{u_i}{\varepsilon} \\ 2ku_i - k(a+1) &d\end{pmatrix}\begin{pmatrix}v \\ z\end{pmatrix}\\
&=\begin{pmatrix}I-\frac{1}{\varepsilon} [kv^3+(3ku_i-k(a+1))v^2+vz]\\ - kv^2\end{pmatrix},\\
v(t)&=v(t+T),\\
z(t)&=z(t+T).
\end{aligned}\label{Eq: bap shifted}
\right.
\end{align}


According to Proposition~\ref{Prop: Nonlinearities satisfy assumptions}, the nonlinearity in~\eqref{Eq: bap shifted} satisfies Assumption~\ref{Ass: Nonlinearity}. Moreover, considering the system for the equilibrium point $(0 , 0)$, then $-(\varepsilon A +\frac{ka}{\varepsilon})$ generates a bounded analytic semigroup by Proposition~\ref{Prop: Giga-Kajiwara} and since $0 \in \rho(\varepsilon A +\frac{ka}{\varepsilon})$, we can apply Lemma~\ref{Lem: operator matrix} to conclude that the negative of the operator matrix in~\eqref{Eq: bap shifted} has zero in its resolvent set and generates a bounded analytic semigroup. Consequently, Theorem~\ref{Thm: Abstract existence theorem} is applicable in the case of the equilibrium point $(0 , 0)$ and delivers a unique strong periodic solution $(v , z)$ to~\eqref{Eq: bap shifted} in the desired function space for small periodic forcings $I$.\par
For the second equilibrium point we see that $u_2 < 0$, so that the component in the upper right component of the operator matrix is negative. Therefore, we cannot apply Lemma~\ref{Lem: operator matrix} for $(u_2,w_2)$.\par
Similarly, for $(u_3 , w_3)$ it is
\begin{align*}
2ku_3 - k(a+1) = 2 k e > 0.
\end{align*}
Hence, Lemma~\ref{Lem: operator matrix} is not applicable in this case. Altogether, Theorem~\ref{Thm: Aliev-Panfilov} follows.


\subsection{The periodic bidomain Rogers--McCulloch equation}

Recall the periodic bidomain Rogers--McCulloch equation
\begin{align}
\left\{
\begin{aligned} 
\partial_t u+\varepsilon Au&=I-\frac{1}{\varepsilon}[bu^3-b(a+1)u^2+bau+uw] \qquad &{\rm in}\ \IR\times \Omega,\\
\partial_t w &= cu-dw \qquad &{\rm in}\ \IR\times \Omega,\\
u(t) &= u(t+T) &{\rm in} \ \IR \times \Omega,\\
w(t) &= w(t+T) &{\rm in} \ \IR \times \Omega.
\end{aligned} \label{Eq: bmc2}
\right.
\end{align}

In order to calculate the equilibrium points, we consider
\begin{align}
bu^3-b(a+1)u^2+bau+uw&=0, \label{Eq: mces1}\\
 cu-dw&=0. \label{Eq: mces2}
\end{align}
Then, the equilibrium points are $(u_1,w_1)=(0,0)$ and, if we assume $\left(a+1-\frac{c}{bd}\right)^2-4a >0$, furthermore
\begin{align}
(u_2,w_2)&=\left(\frac12 \Big(a+1-\frac{c}{bd}-e\Big),\frac{c}{2d}\cdot \Big(a+1-\frac{c}{bd}-e\Big)\right),\label{Eq: mcep2.2}\\
(u_3,w_3)&=\left(\frac12 \Big(a+1-\frac{c}{bd}+e\Big),\frac{c}{2d}\cdot \Big(a+1-\frac{c}{bd}+e\Big)\right).\label{Eq: mcep3.2}
\end{align}
with $e=\sqrt{\left(a+1-\frac{c}{bd}\right)^2-4a}$.  In the following, we want to use the results from Sections~\ref{Sec: Da Prato--Grisvard} and~\ref{Sec: The periodic solution} to obtain periodic solutions in a neighborhood of these equilibrium points. For this purpose, we use Taylor expansion at the equilibrium points and perform the following change of variables 
\begin{align*}
\begin{pmatrix}v\\ z\end{pmatrix}:=\begin{pmatrix}u-u_i \\ w-w_i\end{pmatrix}
\end{align*}
for $i=1,2,3$. Then, functions $F$ and $G$ describing the ionic transport defined as in Section~\ref{Sec: Introduction} read as follows
\begin{align*}
F(v , z) &= \frac{1}{\varepsilon} [bv^3+(3bu_i-b(a+1))v^2+(3bu_i^2-2b(a+1)u_i+ba+w_i)v+u_iz+vz],\\
G(v , y) &= -cv + dz.
\end{align*}
Plugging this into equation~\eqref{Eq: bmc2} and shifting the linear parts of $F$ and $G$ to the left-hand side yields
\begin{align}
\left\{
\begin{aligned}
\partial_t \begin{pmatrix}v \\ z\end{pmatrix}+&\begin{pmatrix}\varepsilon A+\frac{1}{\varepsilon} [3bu_i^2-2b(a+1)u_i+ba+w_i]& \frac{u_i}{\varepsilon} \\ -c &d\end{pmatrix}\begin{pmatrix}v \\ z\end{pmatrix}\\
&=\begin{pmatrix}I-\frac{1}{\varepsilon} [bv^3+(3bu_i-b(a+1))v^2+vz]\\ 0\end{pmatrix},\\
v(t)&=v(t+T),\\
z(t)&=z(t+T).
\end{aligned}\label{Eq: bmc shifted}
\right.
\end{align}



According to Proposition~\ref{Prop: Nonlinearities satisfy assumptions}, the nonlinearity in~\eqref{Eq: bmc shifted} satisfies Assumption~\ref{Ass: Nonlinearity}. Next, considering the equilibrium point $(0 , 0)$, the operator $-(\varepsilon A +\frac{ba}{\varepsilon})$ generates a bounded analytic semigroup by Proposition~\ref{Prop: Giga-Kajiwara} and since $0\in\rho(\varepsilon A +\frac{ba}{\varepsilon})$, we can apply Lemma~\ref{Lem: operator matrix} to conclude that the negative of the operator matrix in~\eqref{Eq: bmc shifted} has zero in its resolvent set and generates a bounded analytic semigroup. Consequently, Theorem~\ref{Thm: Abstract existence theorem} is applicable in the case of the equilibrium point $(0 , 0)$ and delivers a unique strong periodic solution $(v,z)$ to~\eqref{Eq: bmc shifted} in the desired function space for small forcings $I$. \par
Next, equation~\eqref{Eq: mces1} implies $w_i = -bu_i^2 + b(a+1)u_i - ba$ for $i=2,3$. Then
\begin{align*}
3bu_i^2-2b(a+1)u_i+ba+w_i = u_i (2bu_i - b(a+1)).
\end{align*} 
Hence, for the second equilibrium point we either have $3bu_2^2-2b(a+1)u_2+ba+w_2 <0$, then $-(\varepsilon A+\frac{1}{\varepsilon} [3bu_2^2-2b(a+1)u_2+ba+w_2])$ does not generate a bounded analytic semigroup, or $u_2 <0$. Therefore, we cannot apply Lemma~\ref{Lem: operator matrix} for $(u_2,w_2)$.\par
If we assume 
\begin{align*}
\sqrt{\big(a+1-\frac{c}{bd}\big)^2-4a}-\frac{c}{bd}>0,
\end{align*}
we obtain $3bu_3^2-2b(a+1)u_3+ba+w_3>0$ and $u_3>0$. Thus, $-(\varepsilon A+\frac{1}{\varepsilon} [3bu_3^2-2b(a+1)u_3+ba+w_3])$ generates a bounded analytic semigroup by Proposition~\ref{Prop: Giga-Kajiwara} and $0 \in \rho (\varepsilon A+\frac{1}{\varepsilon} [3bu_3^2-2b(a+1)u_3+ba+w_3])$. Hence, we can apply Lemma~\ref{Lem: operator matrix} to conclude that the negative of the operator matrix in~\eqref{Eq: bmc shifted} has zero in its resolvent and generates a bounded analytic semigroup. Thus, Theorem~\ref{Thm: Abstract existence theorem} is applicable in this case for $(u_3, w_3)$ and delivers a unique strong periodic solution $(v,z)$ in the desired function space for small forcings $I$. This delivers Theorem~\ref{Thm: Rogers-McCulloch}.

\subsection{The periodic bidomain Allen--Cahn equation}

Recall the periodic bidomain Allen--Cahn equation
\begin{align}
\left\{
\begin{aligned}
\partial_t u +Au&=I+u-u^3 \qquad &{\rm in}\ \IR\times \Omega,\\
u(t) &= u(t+T) &{\rm in} \ \IR \times \Omega.
\end{aligned} \label{Eq: bac2}
\right.
\end{align} 
The equilibrium points of this system are $u_1=-1$, $u_2=0$, and $u_3=1$. In the following, we want to use the results from Sections~\ref{Sec: Da Prato--Grisvard} and~\ref{Sec: The periodic solution} to obtain periodic solutions in a neighborhood of these equilibrium points. For this purpose, we use Taylor expansion at the equilibrium points and perform the change of variables $v = u - u_i$ for $i=1,2,3$. Then, the function $F(u)= u^3-u$ reads as follows
\begin{align*}
F(v)= v^3 + 3 u_i v^2 -(1 - 3 u_i^2)v, \qquad i=1,2,3.
\end{align*}
Plugging this into equation~\eqref{Eq: bac2} and shifting the linear parts of $F$ to the left-hand side yields 
\begin{align}
\left\{
\begin{aligned}
\partial_t v +(A - 1 + 3u_i^2)v&=I-v^3 - 3 u_i v^2 \qquad &&{\rm in}\ \IR\times \Omega,\\
u(t) &= u(t+T) &&{\rm in} \ \IR \times \Omega
\end{aligned} \label{Eq: bac shifted}
\right.
\end{align} 
for $i=1,2,3$. According to Proposition~\ref{Prop: Nonlinearities satisfy assumptions}, the nonlinearity in~\eqref{Eq: bac shifted} satisfies Assumption~\ref{Ass: Nonlinearity}. Since $-(A+2)$ generates a bounded analytic semigroup by Proposition~\ref{Prop: Giga-Kajiwara} and since $0 \in \rho (A+2)$, Theorem~\ref{Thm: Abstract existence theorem} is applicable in the case of the equilibrium points $u_1$ and $u_3$ and delivers a unique strong periodic solution $v$ to~\eqref{Eq: bac shifted} in the desired function space for small forcings $I$. Thus, we obtain Theorem~\ref{Thm: Allen-Cahn}.

\nocite{*}

\begin{bibdiv}
\begin{biblist}

\bib{aliev1996simple}{article}{
      author={Aliev, R.~R.},
      author={Panfilov, A.~V.},
       title={{A {S}imple {T}wo-variable {M}odel of {C}ardiac {E}xcitation.}},
        date={1996},
     journal={Chaos, Solitons \& Fractals},
      volume={7},
      number={3},
       pages={293\ndash 301},
}

\bib{Ama93}{incollection}{
      author={Amann, H.},
       title={{Nonhomogeneous linear and quasilinear elliptic and parabolic
  boundary value problems}},
        date={1993},
   booktitle={{In: Function {S}paces, {D}ifferential {O}perators and
  {N}onlinear {A}nalysis, {H}. {S}chmeisser, {H}. {T}riebel (Eds.)}},
   publisher={{T}eubner, {S}tuttgart},
       pages={9\ndash 126},
}

\bib{MR1778182}{article}{
      author={Ambrosio, L.},
      author={{Colli Franzone}, P.},
      author={Savar{\'e}, G.},
       title={{On the asymptotic behaviour of anisotropic energies arising in
  the cardiac bidomain model.}},
        date={2000},
     journal={Interfaces Free Bound.},
      volume={2},
      number={3},
       pages={213\ndash 266},
}

\bib{AB02}{article}{
      author={Arendt, W.},
      author={Bu, S.},
       title={{The operator-valued {M}arcinkiewicz multiplier theorem and
  maximal regularity.}},
        date={2002},
     journal={Math. Z.},
      volume={240},
      number={2},
       pages={311\ndash 343},
}

\bib{MR2451724}{article}{
      author={Bourgault, Y.},
      author={Coudi{\`e}re, Y.},
      author={Pierre, C.},
       title={{Existence and uniqueness of the solution for the bidomain model
  used in cardiac electrophysiology.}},
        date={2009},
     journal={Nonlinear Anal. Real World Appl.},
      volume={10},
      number={1},
       pages={458\ndash 482},
}

\bib{MR3342160}{article}{
      author={Breiten, T.},
      author={Kunisch, K.},
       title={{Riccati-based feedback control of the monodomain equations with
  the {F}itz{H}ugh-{N}agumo model}},
        date={2014},
     journal={SIAM J. Control Optim.},
      volume={52},
      number={6},
       pages={4057\ndash 4081},
}

\bib{franzone1990mathematical}{article}{
      author={{Colli Franzone}, P.},
      author={Guerri, L.},
      author={Tentoni, S.},
       title={{Mathematical modeling of the excitation process in myocardial
  tissue: influence of fiber rotation on wavefront propagation and potential
  field.}},
        date={1990},
     journal={Math. Biosci.},
      volume={101},
      number={2},
       pages={155\ndash 235},
}

\bib{MR3308707}{book}{
      author={{Colli Franzone}, P.},
      author={Pavarino, L.~F.},
      author={Scacchi, S.},
       title={{Mathematical cardiac electrophysiology.}},
      series={{MS\&A. Modeling, Simulation and Applications}},
   publisher={Springer, Cham},
        date={2014},
      volume={13},
}

\bib{PSC05}{article}{
      author={{Colli Franzone}, P.},
      author={Pennacchio, M.},
      author={Savar{\'e}, G.},
       title={{Multiscale modeling for the bioelectric activity of the
  heart.}},
        date={2005},
     journal={SIAM J. Math. Anal.},
      volume={37},
      number={4},
       pages={1333\ndash 1370},
}

\bib{MR1944157}{incollection}{
      author={{Colli Franzone}, P.},
      author={Savar{\'e}, G.},
       title={{Degenerate evolution systems modeling the cardiac electric field
  at micro- and macroscopic level.}},
        date={2002},
   booktitle={{Evolution equations, semigroups and functional analysis
  ({M}ilano, 2000)}},
      series={{Progr. Nonlinear Differential Equations Appl.}},
      volume={50},
   publisher={Birkh{\"a}user, Basel},
       pages={49\ndash 78},
}

\bib{DPG75}{article}{
      author={{Da Prato}, G.},
      author={Grisvard, P.},
       title={{Sommes d'op{\'e}rateurs lin{\'e}aires et {\'e}quations
  diff{\'e}ntielles op{\'e}rationelles.}},
        date={1975},
     journal={J. Math. Pures Appl.},
      volume={54},
      number={3},
       pages={305\ndash 387},
}

\bib{MR2006641}{article}{
      author={Denk, R.},
      author={Hieber, M.},
      author={Pr{\"u}ss, J.},
       title={{{$\scr R$}-boundedness, {F}ourier multipliers and problems of
  elliptic and parabolic type.}},
        date={2003},
     journal={Mem. Amer. Math. Soc.},
      volume={166},
      number={788},
}

\bib{DHPIII}{article}{
      author={Denk, R.},
      author={Hieber, M.},
      author={Pr{\"u}ss, J.},
       title={{Optimal $\mathrm{L}^p$-$\mathrm{L}^q$-estimates for parabolic
  boundary value problems with inhomogeneous data.}},
        date={2007},
     journal={Math. Z.},
      volume={257},
      number={1},
       pages={193\ndash 224},
}

\bib{fitzhugh1961impulses}{article}{
      author={FitzHugh, R.},
       title={{Impulses and {P}hysiological {S}tates in {T}heoretical {M}odels
  of {N}erve {M}embrane.}},
        date={1961},
     journal={Biophys. J.},
      volume={1},
      number={6},
       pages={445\ndash 466},
}

\bib{GK}{article}{
      author={Giga, Y.},
      author={Kajiwara, N.},
       title={{On a resolvent estimate for bidomain operators and its
  applications.}},
        date={2016},
     journal={The University of Tokyo Mathematical Sciences Preprint Series},
      number={4},
}

\bib{haase2006}{book}{
      author={Haase, M.},
       title={{The Functional Calculus for Sectorial Operators.}},
      series={{Operator Theory: Advances and Applications}},
   publisher={Birkh{\"a}user Verlag, Basel},
        date={2006},
      volume={169},
}

\bib{henriquez}{article}{
      author={Henriquez, C.~S.},
       title={{Simulating the electrical behavior of cardiac tissue using the
  bidomain model.}},
        date={1993},
     journal={Critical reviews in biomedical engineering},
      volume={21},
      number={1},
       pages={1\ndash 77},
}

\bib{MR1673204}{book}{
      author={Keener, J.},
      author={Sneyd, J.},
       title={{Mathematical physiology.}},
      series={{Interdisciplinary Applied Mathematics}},
   publisher={Springer-Verlag, New York},
        date={1998},
      volume={8},
}

\bib{KNW}{article}{
      author={Kunisch, K.},
      author={Nagaiah, C.},
      author={Wagner, M.},
       title={{A parallel Newton-Krylov method for optimal control of the
  monodomain model in cardiac electrophysiology}},
        date={2011},
     journal={Computing and Visualization in Science},
      volume={14},
      number={6},
       pages={257\ndash 269},
}

\bib{KW12}{article}{
      author={Kunisch, K.},
      author={Wagner, M.},
       title={{Optimal control of the bidomain system ({I}): the monodomain
  approximation with the {R}ogers-{M}c{C}ulloch model.}},
        date={2012},
     journal={Nonlinear Anal. Real World Appl.},
      volume={13},
      number={4},
       pages={1525\ndash 1550},
}

\bib{KW13}{article}{
      author={Kunisch, K.},
      author={Wagner, M.},
       title={{Optimal control of the bidomain system ({II}): uniqueness and
  regularity theorems for weak solutions.}},
        date={2013},
     journal={Ann. Mat. Pura Appl. (4)},
      volume={192},
      number={6},
       pages={951\ndash 986},
}

\bib{Lun95}{book}{
      author={Lunardi, A.},
       title={{Analytic semigroups and optimal regularity in parabolic
  problems.}},
      series={{Progress in Nonlinear Differential Equations and their
  Applications}},
   publisher={Birkh{\"a}user Verlag, Basel},
        date={1995},
      volume={16},
}

\bib{Lun09}{book}{
      author={Lunardi, A.},
       title={{Interpolation theory}},
     edition={Second},
      series={{Appunti. Scuola Normale Superiore di Pisa (Nuova Serie).
  [Lecture Notes. Scuola Normale Superiore di Pisa (New Series)]}},
   publisher={Edizioni della Normale, Pisa},
        date={2009},
        ISBN={978-88-7642-342-0; 88-7642-342-0},
      review={\MR{2523200}},
}

\bib{LR91}{article}{
      author={Luo, C.},
      author={Rudy, Y.},
       title={{A model in the ventricular cardiac action potential.
  {D}epolarization, repolarization, and their interaction.}},
        date={1991},
     journal={Circ Res.},
      volume={68},
      number={6},
       pages={1501\ndash 1526},
}

\bib{MR2773198}{article}{
      author={Matano, H.},
      author={Mori, Y.},
       title={{Global existence and uniqueness of a three-dimensional model of
  cellular electrophysiology.}},
        date={2011},
     journal={Discrete Contin. Dyn. Syst.},
      volume={29},
      number={4},
       pages={1573\ndash 1636},
}

\bib{MM16}{article}{
      author={Mori, Y.},
      author={Matano, H.},
       title={{Stability of front solutions of the bidomain equation.}},
        date={2016},
     journal={Comm. Pure Appl. Math.},
      volume={69},
      number={12},
       pages={2364\ndash 2426},
}

\bib{NKP13}{article}{
      author={Nagaiah, C.},
      author={Kunisch, K.},
      author={Plank, G.},
       title={{Optimal control approach to termination of re-entry waves in
  cardiac electrophysiology.}},
        date={2013},
     journal={J. Math. Biol.},
      volume={67},
      number={2},
       pages={359\ndash 388},
}

\bib{neu1992homogenization}{article}{
      author={Neu, J.~C.},
      author={Krassowska, W.},
       title={{Homogenization of syncytial tissues.}},
        date={1993},
     journal={Crit. Rev. Biomed. Eng.},
      volume={21},
      number={2},
       pages={137\ndash 199},
}

\bib{rogers1994collocation}{article}{
      author={Rogers, J.~M.},
      author={McCulloch, A.~D.},
       title={{A collocation-{G}alerkin finite element model of cardiac action
  potential propagation.}},
        date={1994},
     journal={IEEE Trans. Biomed. Eng.},
      volume={41},
      number={8},
       pages={743\ndash 757},
}

\bib{Stevens}{article}{
      author={Stevens, A.},
       title={{Mathematics and the Life-Sciences: A Personal Point of View.}},
        date={2017},
     journal={Jahresber. Dtsch. Math.-Ver.},
      volume={119},
      number={3},
       pages={143\ndash 168},
}

\bib{SNMCLT06}{article}{
      author={Sundnes, J.},
      author={Nielsen, B.},
      author={Mardal, K.},
      author={Cai, X.},
      author={Lines, G.},
      author={Tveito, A.},
       title={{On the computational complexity of the bidomain and the
  monodomain models in electrophysiology.}},
        date={2006},
     journal={Ann. Biomed. Eng.},
      volume={34},
      number={7},
       pages={1088\ndash 1097},
}

\bib{MR2474265}{article}{
      author={Veneroni, M.},
       title={{Reaction-diffusion systems for the macroscopic bidomain model of
  the cardiac electric field.}},
        date={2009},
     journal={Nonlinear Anal. Real World Appl.},
      volume={10},
      number={2},
       pages={849\ndash 868},
}

\end{biblist}
\end{bibdiv}

\end{document}